\newcommand\myshade{85}
\colorlet{mylinkcolor}{violet}
\colorlet{mycitecolor}{orange}
\colorlet{myurlcolor}{red}
\newtheorem{theorem}{Theorem}[section]
\newtheorem{lemma}[theorem]{Lemma}
\newtheorem{corollary}[theorem]{Corollary}
\newtheorem{proposition}[theorem]{Proposition}
\newtheorem{claim}[theorem]{Claim}
\newtheorem{conjecture}[theorem]{Conjecture}
\newtheorem{definition}[theorem]{Definition}
\let\olddefinition\definition
\renewcommand{\definition}{\olddefinition\normalfont}
\newtheorem*{fact}{Fact}
\let\oldfact\fact
\renewcommand{\fact}{\oldfact\normalfont}
\newtheorem{remark}[theorem]{Remark}
\let\oldremark\remark
\renewcommand{\remark}{\oldremark\normalfont}
\newtheorem{example}[theorem]{Example}
\let\oldexample\example
\renewcommand{\example}{\oldexample\normalfont}
\newcommand{\R}{\mathbb R }
\newcommand{\Z}{\mathbb Z }
\newcommand{\sphere}{\mathbb{S}^2}
\newcommand{\Stab}{\mathrm{Stab}} 
\newcommand{\homeo}{\cong} 
\newcommand{\uc}[1]{\tilde{#1}} 
\newcommand{\CC}[1]{\mathbf{#1}} 
\newcommand{\CCv}[1]{\mathbf{#1}} 
\newcommand{\CCc}[1]{\mathbf{#1}} 
\newcommand{\Hyp}[1]{\hat{\mathcal{#1}}} 
\newcommand{\hyp}[1]{\hat{\mathfrak{#1}}} 
\newcommand{\Hs}[1]{\mathcal{#1}} 
\newcommand{\hs}[1]{\mathfrak{#1}} 
\newcommand{\comp}[1]{{#1}^*} 
\newcommand{\bdry}{\partial}
\newcommand{\bhyp}[1]{\bdry\hat{\mathfrak{#1}}} 
\newcommand{\bhs}[1]{\bdry\mathfrak{#1}} 
\newcommand{\obhs}[1]{\bdry^\circ\mathfrak{#1}} 
\newcommand{\grid}[1]{\Gamma}
\newcommand{\gridy}[1]{\mathbf{#1}}
\newcommand{\charf}[1]{\mathds{1}_{#1}}
\newcommand{\inv}[1]{J^{-1}(#1)}
\DeclareMathOperator{\type}{type}
\DeclareMathOperator{\diam}{diam}
\DeclareMathOperator{\Link}{Link}
\title{Sphere boundaries of hyperbolic groups}
\author{Benjamin Beeker}
\address{Department of Mathematics, Hebrew University, Jerusalem, Israel}
\email{beeker@tx.technion.ac.il}
\urladdr{}
\author{Nir Lazarovich}
\address{Department of Mathematics, ETH Z\"urich, R\"amistrasse 101, 8092 Z\"urich, Switzerland}
\email{nir.lazarovich@math.ethz.ch}
\date{} 
\subjclass[2010]{Primary: 20F65, 20F67, 20H10}
\keywords{CAT(0) cube complexes, Hyperbolic groups, hyperbolic 3-manifolds.}
\begin{document}
\maketitle

\begin{abstract}
We show that a one-ended simply connected at infinity hyperbolic group $G$ with enough codimension-1 surface subgroups has $\bdry G \homeo \sphere$. By Markovic \cite{Mar13}, our result gives a new characterization of virtually fundamental groups of hyperbolic 3-manifolds.
\end{abstract}

\section{Introduction}
We recall the following well-known conjecture.

\begin{conjecture}[Cannon's Conjecture \cite{Can91}] Let $G$ be a hyperbolic group. If $\partial G \homeo \sphere$ then $G$ acts geometrically on the hyperbolic space $\mathbb{H}^3$.
\end{conjecture}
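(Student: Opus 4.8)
The plan is to approach Cannon's conjecture through the cubulation program. Let $G$ be a hyperbolic group with $\bdry G \homeo \sphere$. Since $\sphere$ is connected, $G$ is one-ended, and since $\sphere$ minus a point is homeomorphic to $\R^2$ one expects $G$ to be simply connected at infinity. The substantive input is a supply of codimension-1 subgroups: by Markovic \cite{Mar13} (building on the surface subgroups of Kahn--Markovic), a hyperbolic group with $2$-sphere boundary contains \emph{enough} quasiconvex surface subgroups --- their limit circles separate pairs of points of $\sphere$ in a uniform, controlled way. Feeding this family into Sageev's construction produces an action of $G$ on a \CCC $X$, and the separation control is exactly what is needed for this action to be proper and cocompact (Bergeron--Wise, Hsu--Wise).

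With a geometric action on a \CCC secured, Agol's theorem makes a finite-index subgroup of $G$ act specially on $X$, so that the hyperplanes are two-sided, embedded, and stabilized by surface subgroups --- a configuration that looks like a system of incompressible surfaces filling a closed $3$-manifold. The goal is then to genuinely reconstruct such a manifold: to identify a finite-index subgroup of $G$ with $\pi_1$ of an aspherical closed $3$-manifold $M$. This is precisely where the main theorem of the present paper enters --- it shows that a one-ended simply connected at infinity hyperbolic group with enough codimension-1 surface subgroups already has sphere boundary, and one expects its proof to be built so as to extract genuinely $3$-dimensional data from the cube complex. Once $M$ is in hand the endgame is classical: $\pi_1 M$ is word-hyperbolic and $M$ is closed and aspherical, so by Perelman's geometrisation $M$ is hyperbolic; hence $\pi_1 M$, and then (by a commensurator argument) $G$ itself, acts geometrically on $\mathbb{H}^3$. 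Together with the trivial converse this gives the characterisation of virtually hyperbolic-$3$-manifold groups stated in the abstract.

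I expect the main obstacle to lie squarely at the passage from combinatorial to hyperbolic geometry, in two places. First, one must know that $\bdry G \homeo \sphere$ truly yields surface subgroups that are \emph{enough} in the strong quantitative sense required for a proper cocompact cubulation, not merely plentiful; controlling the Kahn--Markovic surfaces that finely is delicate and is the heart of Markovic's reduction. Second, even granted such a cubulation, turning it into an honest closed aspherical $3$-manifold --- rather than the weaker conclusion that the boundary is a sphere --- is the crux; the present paper pushes this step furthest under the extra hypothesis of simple connectivity at infinity, and dispensing with that hypothesis (or verifying it is automatic for groups with sphere boundary) is what remains.
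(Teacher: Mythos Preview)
The statement you are trying to prove is Cannon's Conjecture, which the paper states as an \emph{open conjecture} and does not prove. There is therefore no proof in the paper to compare your proposal against; the paper's main theorem (Theorem~\ref{main result}) goes in a different direction: it assumes enough codimension-1 surface subgroups (plus one-endedness and a cohomological condition at infinity) and \emph{deduces} that the boundary is a sphere.

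Your proposal contains a decisive error at its very first substantive step. You write that ``by Markovic~\cite{Mar13}\ldots a hyperbolic group with $2$-sphere boundary contains \emph{enough} quasiconvex surface subgroups.'' Markovic proves no such thing. His theorem, as quoted in the paper, takes as \emph{hypotheses} both $\bdry G \homeo \sphere$ and the existence of enough quasiconvex surface subgroups, and concludes that $G$ acts geometrically on $\mathbb{H}^3$. Kahn--Markovic~\cite{KaMa12} establish the existence of enough surface subgroups only for groups already known to be fundamental groups of closed hyperbolic $3$-manifolds --- the converse direction. Producing enough quasiconvex surface subgroups from the bare assumption $\bdry G \homeo \sphere$ is precisely the unresolved heart of Cannon's Conjecture; you have assumed it.

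Even setting that aside, your use of the paper's main theorem is circular: its conclusion is $\bdry G \homeo \sphere$, which is your starting hypothesis, and the paper does not extract a $3$-manifold from the cube complex as you hope. Indeed, if one \emph{did} somehow know that $G$ has enough quasiconvex surface subgroups, Markovic's criterion would finish the argument immediately, with no need for cubulation, Agol, manifold reconstruction, or geometrisation.
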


In \cite{Mar13}, Markovic described the following criterion under which the conjecture is true. 

\begin{theorem}
Let $G$ be a hyperbolic group that acts faithfully on its boundary $\partial G = \sphere$ and contains enough quasiconvex surface subgroups. Then $G$ acts geometrically on the hyperbolic space $\mathbb{H}^3$.
\end{theorem}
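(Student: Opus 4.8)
The plan is to use the rich supply of quasiconvex surface subgroups to cubulate $G$, then to recognize the resulting \CCC\ as a model of $\mathbb{H}^3$ and finish with geometrization. First I would record the effect of a single quasiconvex surface subgroup $H \le G$: its limit set $\Lambda H \subset \bdry G = \sphere$ is a quasicircle, hence a Jordan curve separating $\sphere$ into two open disks, so $H$ is a codimension-$1$ subgroup and the pair $(H,\Lambda H)$ determines a $G$-wall on $\sphere$ together with its two halfspaces. I would read the hypothesis that $G$ contains \emph{enough} surface subgroups as the statement that these walls are abundant: for every pair of disjoint nondegenerate continua in $\sphere$ there is a $G$-translate of some $\Lambda H$ separating them. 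By the cubulation criterion of Bergeron--Wise (building on Sageev's construction and Hruska--Wise finiteness), this abundance makes the dual wallspace have bounded halfspace-intersections and locally finitely many wall-orbits, so $G$ acts properly and cocompactly on the dual \CCC\ $X$.

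The core of the argument is to show that $X$ is a $3$-manifold. Hyperplanes of $X$ carry quasiconvex surface-subgroup stabilizers, and two hyperplanes cross exactly when their limit quasicircles interleave; since $n$ pairwise-crossing hyperplanes of a \CCC\ span an $n$-cube, while $n$ Jordan curves crossing pairwise and minimally cut $\sphere$ into only $n^2-n+2$ regions -- fewer than the $2^n$ required once $n\ge 4$ -- one gets $\dim X \le 3$. The hard part will be to upgrade this dimension bound to the assertion that the link of every vertex of $X$ is homeomorphic to $\sphere$, so that $X$ is a $3$-manifold (and, being CAT(0), homeomorphic to $\R^3$). Here one must use the richness hypothesis to see that the local pattern of quasicircles through a point of $\sphere$ -- equivalently the hyperplanes incident to a vertex of $X$ -- assembles into a genuine $2$-sphere rather than some other flag $2$-complex, and one uses the faithfulness of $G \actson \bdry G$ to kill the maximal finite normal subgroup, which is what keeps this local picture two-sided and honest. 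I expect \textbf{this manifold-recognition step to be the main obstacle}: it is in effect a local converse to ``$\bdry G \homeo \sphere$''.

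To conclude, a hyperbolic group acting properly cocompactly on a \CCC\ is virtually special by Agol's theorem, hence residually finite and virtually torsion-free; I would pass to a torsion-free finite-index normal subgroup $G' \trianglelefteq G$, so that $M := X/G'$ is a closed aspherical $3$-manifold with $\pi_1 M \homeo G'$. It is irreducible because its universal cover $X$ is contractible, atoroidal because a word-hyperbolic group contains no $\Z^2$, and virtually Haken because the embedded hyperplanes of $X$ descend, after a further finite cover using subgroup separability, to embedded incompressible surfaces. By Thurston's geometrization theorem $M$ carries a hyperbolic metric, so $G'$ acts freely, properly, and cocompactly on $\mathbb{H}^3$ by isometries. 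Finally $G$ normalizes $G' < \mathrm{Isom}(\mathbb{H}^3)$, and Mostow rigidity (the commensurator of a cocompact lattice is again discrete) promotes this to a geometric action of $G$ itself on $\mathbb{H}^3$, as desired.
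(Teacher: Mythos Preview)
First, note that the paper does not itself prove this statement: it is quoted from Markovic \cite{Mar13} as background for the paper's own result. That said, your opening moves---cubulate $G$ via Bergeron--Wise using the codimension-$1$ surface subgroups, then apply Agol's theorem to get virtual specialness and hence separability of quasiconvex subgroups---do match Markovic's argument. The gap is in what comes next.

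The Sageev cube complex $X$ is essentially never a $3$-manifold, and your dimension argument is incorrect. An $n$-cube in $X$ requires only that the $n$ corresponding walls \emph{pairwise} cross, not that all $2^n$ halfspace-intersections be realized as regions; for quasicircles on $\sphere$ pairwise crossing just means each pair links. For instance $n$ great circles through a fixed pair of antipodal points are pairwise linking for every $n$, and analogous configurations occur abundantly among the $G$-translates of any single limit quasicircle (use the north--south dynamics of a loxodromic in $\Stab(H)$). So $\dim X$ is typically much larger than $3$, and even when the dimension happens to be small the vertex links are generic flag complexes rather than spheres. The step you flag as ``the main obstacle'' is therefore not merely hard; the hoped-for conclusion is false, and no amount of richness of the wall system repairs it.

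Markovic does not attempt to make $X$ a manifold. The cube complex is used only as a vehicle for separability. The hypothesis $\bdry G \homeo \sphere$ enters instead through Poincar\'e-duality considerations (Bestvina--Mess): a torsion-free finite-index subgroup is a $PD_3$ group. Separability then lets one pass to a further finite-index subgroup in which a chosen quasiconvex surface subgroup is malnormally embedded and induces a genuine splitting over that surface group; iterating produces a Haken-type hierarchy at the level of the group, after which Thurston's hyperbolization for Haken manifolds finishes the job. Your closing paragraph (pass to a torsion-free subgroup, geometrize, promote via Mostow rigidity) is fine in spirit once one actually has a closed aspherical $3$-manifold in hand, but that manifold arises from the hierarchy on the group, not from the cube complex.
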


In \cite{KaMa12}, Kahn and Markovic showed that the fundamental group of a hyperbolic 3-manifold contains enough quasiconvex surface subgroups. This shows that Markovic's criterion is also necessary. 
More about the history of Cannon's conjecture and the works preceding Markovic's criterion can be found in a survey about boundaries of hyperbolic groups by Kapovich and Benakli \cite{KaBe02}.

In this paper we prove that it is possible to replace the assumption that the boundary at infinity is homeomorphic to $\sphere$ by the assumption of vanishing of the first cohomology of $G$ at infinity (see Subsection \ref{subsec:properties at infinity} for the definitions). In other words, we have the following result.

\begin{theorem}[Main result] \label{main result}
Let $G$ be a one-ended hyperbolic group. Assume that $G$ has vanishing first cohomology over $\Z/2$ at infinity, and that $G$ contains enough quasiconvex codimension-1 surface subgroups. Then $\partial G \homeo\sphere$. 
\end{theorem}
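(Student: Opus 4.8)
The plan is to build a topological sphere out of the codimension-1 surface subgroups and their limit sets, using the cubulation machinery of Sageev together with Markovic's criterion as a black box. First I would recall that ``enough quasiconvex codimension-1 surface subgroups'' means that the limit sets of these surface subgroups separate points of $\partial G$ in a controlled way, and in particular that the collection of all translates of these limit sets forms a family of codimension-1 subsets to which Sageev's construction applies, yielding a cocompact action of $G$ on a \CCC $\CC{X}$ with finite stabilizers of hyperplanes that are (virtually) the given surface subgroups. Because $G$ is hyperbolic and the surface subgroups are quasiconvex, $\CC{X}$ is hyperbolic, and the action is proper and cocompact, so $\bdry \CC X \homeo \bdry G$; thus it suffices to understand $\bdry \CC X$.

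Next I would analyze the local structure of $\CC X$, using the hypothesis of vanishing first cohomology at infinity over $\Z/2$. The key point is that a hyperplane $\hat{\mathfrak h}$ in $\CC X$ is itself a \CCC on which its stabilizer — a surface group — acts geometrically, and its boundary $\bhyp{h}$ embeds in $\bdry \CC X$ as the limit set of that surface subgroup, which is a topologically embedded circle (since the surface group is a closed surface group). The cohomological vanishing hypothesis should force these circles to be \emph{separating} in $\bdry G$ and, more importantly, to have exactly two complementary components — this is where one translates ``$H^1$ of $G$ at infinity over $\Z/2$ vanishes'' into a statement that each embedded circle coming from a hyperplane separates the boundary into exactly two pieces, each of which is itself ``simply connected at infinity'' in the appropriate sense. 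I would then set up a combinatorial gluing/Link analysis showing that the link of each vertex in $\CC X$ is a sphere (equivalently, that $\CC X$ is, up to subdivision, a manifold), by an inductive argument on the cubes: the vanishing cohomology controls how the hyperplane-circles fit together around a vertex, ruling out the local configurations (e.g. three circles pairwise crossing, or a circle failing to bound a disk on one side) that would produce a non-manifold point or a boundary of the wrong genus.

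The third block of the argument is to upgrade ``$\CC X$ is locally a 3-manifold'' to the conclusion that $\bdry \CC X$ is $\sphere$. Here I would either (a) show directly that $\CC X$ is a simply connected 3-manifold with the quasi-isometry type forcing $\bdry \homeo \sphere$ via a result of Bestvina--Mess / Bowditch type, using that $\CC X$ is one-ended and that every embedded circle in the boundary separates; or (b) invoke Markovic's theorem from the excerpt: having produced enough quasiconvex surface subgroups with the right separation properties, and having shown $G$ acts faithfully on a boundary that we now know to be $\sphere$ (from the manifold-point analysis), conclude via \cite{Mar13} and close the loop. I expect route (a) is cleaner: the cube complex $\CC X$ together with the circle-hyperplane structure is essentially a combinatorial model of a handle decomposition, and one can recognize $\sphere$ as the boundary by checking it is a compact, connected, locally $2$-dimensional ANR with trivial reduced \v{C}ech cohomology in all degrees (the $H^0$ and $H^2$ being automatic from one-endedness and duality, and the crucial $H^1$ vanishing being exactly the hypothesis transported through the Bestvina--Mess correspondence) — then appeal to the characterization of $\sphere$ as the unique such space, or to a direct recognition theorem.

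The main obstacle, I expect, is the middle step: extracting genuinely topological, global consequences about $\bdry G$ (each hyperplane-circle is separating with exactly two sides, and these sides are themselves ``$\sphere$-like'') from the purely cohomological input ``$H^1$ over $\Z/2$ at infinity vanishes.'' The subtlety is that a priori the limit set of a surface subgroup could be a wild circle or could fail to separate, and one must rule this out using only $\Z/2$ coefficients, which is delicate because $\Z/2$-cohomology does not see orientations or torsion phenomena that an integral argument would exploit; getting the parity/count of complementary components right — exactly two, not zero and not more — and propagating this through the cube complex to control all the Links simultaneously is where the real work lies, and presumably where the ``enough'' quantifier in the surface-subgroup hypothesis is used in an essential, non-formal way.
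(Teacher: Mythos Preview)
Your proposal has a genuine gap, and the paper takes a quite different route.

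First, route (b) is circular: Markovic's theorem \emph{assumes} $\partial G \cong \mathbb{S}^2$ as a hypothesis, so it cannot be used to conclude that $\partial G \cong \mathbb{S}^2$.

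Second, and more seriously, the cube complex $\CC X$ produced by Sageev's construction is in general \emph{high-dimensional}, not $3$-dimensional; its vertex links are not $2$-spheres and $\CC X$ is not a $3$-manifold. The counterexample in the introduction (a right-angled Coxeter group whose defining flag complex is a torus) has surface hyperplane stabilizers and is one-ended and hyperbolic, yet $\partial G \not\cong \mathbb{S}^2$; what fails there is precisely the cohomological hypothesis. This shows that the local/link structure of $\CC X$ together with ``hyperplanes have circle boundary'' cannot by itself force the conclusion, so the inductive link analysis you outline cannot succeed as stated.

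Third, even route (a) in its purely cohomological form is incomplete: knowing that $\partial G$ is a compact, connected, locally connected metric space with $\check H^1(\partial G;\mathbb{Z}/2)=0$ does \emph{not} characterize $\mathbb{S}^2$ among such spaces --- a manifold recognition step is still needed, and that is exactly the missing ingredient.

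The paper instead uses the Kline Sphere Characterization (Bing): a nondegenerate compact connected locally connected metric space is $\mathbb{S}^2$ if and only if no pair of points separates it and every Jordan curve separates it. The ``no pair of points separates'' clause follows fairly quickly from Bowditch's theorem together with the one-endedness of the hyperplane stabilizers. The hard part --- that every Jordan curve $J\subset\partial G$ separates --- is where the vanishing of $H^1$ over $\mathbb{Z}/2$ at infinity enters, and in a very concrete way: one approximates $J$ by ``piecewise-hyperplane'' curves, associates to each such approximation a $\mathbb{Z}/2$-valued $1$-cocycle on $\CC X$ minus a large ball, invokes the cohomological vanishing to find a coboundary (a ``parity function''), extends it to $\partial\CC X\setminus J$, and finally shows this parity takes both values. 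This is an adaptation of Jordan's original proof of the Jordan Curve Theorem. Your instinct about a ``parity/count of complementary components'' is on target, but the parity is attached to points of $\partial G$ relative to an \emph{arbitrary} Jordan curve, not to the sides of a single hyperplane circle --- the latter separate $\partial G$ for essentially elementary reasons (hyperplanes separate $\CC X$ into two halfspaces), and that is not where the cohomological hypothesis is spent.
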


Combining this result with Markovic \cite{Mar13} and Kahn-Markovic \cite{KaMa12} we get the following.

\begin{corollary}
Let $G$ be a hyperbolic group. The following are equivalent:
\begin{enumerate}
\item $G$ acts geometrically on $\mathbb{H}^3$.
\item $\partial G \homeo \sphere$, and $G$ contains enough quasiconvex surface subgroups.
\item $G$ is one-ended, has vanishing first cohomology over $\Z/2$ at infinity and contains enough quasiconvex codimension-1 surface subgroups.
\end{enumerate}
\end{corollary}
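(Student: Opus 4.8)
The plan is to deduce the Corollary directly from the three results assembled in the introduction, so the work is purely logical once those theorems are in hand. I would prove the cycle of implications $(1)\Rightarrow(2)\Rightarrow(3)\Rightarrow(1)$.

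For $(1)\Rightarrow(2)$: if $G$ acts geometrically on $\mathbb{H}^3$, then $\bdry G \homeo \sphere$ by standard Svarc--Milnor and the fact that a cocompact lattice in $\mathrm{Isom}(\mathbb{H}^3)$ has boundary the round sphere. That $G$ contains enough quasiconvex surface subgroups in this case is exactly the Kahn--Markovic theorem \cite{KaMa12} (the Surface Subgroup Theorem, together with their Ehrenpreis-type counting giving ``enough'' of them).

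For $(2)\Rightarrow(3)$: assuming $\bdry G \homeo \sphere$ and that $G$ has enough quasiconvex surface subgroups, I first note $G$ is one-ended because $\sphere$ is connected (a hyperbolic group is one-ended iff its boundary is connected and has no global cut point of the relevant type; in any case $\sphere$ connected rules out the other cases of Stallings' theorem). Vanishing of the first cohomology over $\Z/2$ at infinity follows because, for a hyperbolic group, $H^{k}$ at infinity is computed by the reduced \v Cech cohomology of $\bdry G$ (shifted), and $\tilde H^{1}(\sphere;\Z/2)=0$; one should cite the relevant identification (e.g.\ Bestvina--Mess) from Subsection~\ref{subsec:properties at infinity}. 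Finally, quasiconvex surface subgroups are automatically codimension-1: a quasiconvex surface subgroup $H\le G$ has limit set a topological circle in $\sphere=\bdry G$, which separates $\sphere$, so $H$ is codimension-1; hence ``enough quasiconvex surface subgroups'' implies ``enough quasiconvex codimension-1 surface subgroups.''

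For $(3)\Rightarrow(1)$: by Theorem~\ref{main result} (the Main result), the hypotheses in $(3)$ give $\bdry G \homeo \sphere$. A group with enough quasiconvex codimension-1 surface subgroups in particular has trivial (or at least manageable) finite subgroup structure and acts faithfully on its boundary once one passes to the relevant quotient --- more precisely, the action of $G$ on $\bdry G=\sphere$ has finite kernel $F$ (the kernel is finite for any hyperbolic group with infinite, non-virtually-cyclic $G$), and $G/F$ still satisfies Markovic's hypotheses, so Markovic's Theorem \cite{Mar13} yields a geometric action of $G/F$, hence of $G$, on $\mathbb{H}^3$. The main obstacle here, and the only place requiring care rather than citation, is checking that the surface subgroups produced/assumed in $(3)$ descend correctly and remain ``enough'' in the sense Markovic requires, and that passing to the kernel-free quotient does not destroy one-endedness or the surface-subgroup hypothesis; this is routine but must be stated.

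The step I expect to be the genuine content --- though it is invoked rather than reproved --- is the implication $(3)\Rightarrow(1)$, which rests entirely on Theorem~\ref{main result} of this paper; everything else is bookkeeping with cohomology at infinity and with limit sets of quasiconvex surface subgroups.
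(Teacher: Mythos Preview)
Your cycle $(1)\Rightarrow(2)\Rightarrow(3)\Rightarrow(1)$ is correct and is precisely the argument the paper intends: the corollary is stated without proof, as the immediate combination of Theorem~\ref{main result} with Markovic \cite{Mar13} and Kahn--Markovic \cite{KaMa12}. Two minor remarks: the Bestvina--Mess type duality you invoke for $(2)\Rightarrow(3)$ is not actually contained in Subsection~\ref{subsec:properties at infinity} (which only gives definitions), so cite it externally; and for faithfulness in $(3)\Rightarrow(1)$, the paper's own device is to pass to a torsion-free finite-index subgroup via Haglund--Wise and Agol rather than to quotient by the kernel of the boundary action.
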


The main tool we use is the Kline Sphere Characterization which was proven by Bing in \cite{Bin46}.

\begin{theorem}[The Kline Sphere Characterization Theorem] \label{KSCT} Let $X$ be a topological space which is
\begin{enumerate}
\item \label{KSCT_elementary}nondegenerate, metrizable, compact, connected, locally connected,
\item \label{KSCT_pair_of_points}not separated by any pair of points, and
\item \label{KSCT_Jordan}separated by any Jordan curve.
\end{enumerate} 
Then $X\homeo\sphere$.
\end{theorem}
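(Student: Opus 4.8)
The plan is to show that hypotheses (\ref{KSCT_elementary})--(\ref{KSCT_Jordan}) force $X$ to be two closed disks glued along their boundary circles, which is $\sphere$. By (\ref{KSCT_elementary}) and the Hahn--Mazurkiewicz theorem, $X$ is a nondegenerate Peano continuum, so it is locally and globally arcwise connected, its open connected subsets are arcwise connected, and the standard Peano-continuum machinery (convergence of subcontinua, reduction arcs, Torhorst-type theorems on boundaries of complementary domains) is available. First, $X$ contains a Jordan curve: otherwise it is a dendrite, hence uniquely arcwise connected, and being nondegenerate it contains an arc whose interior points then each separate $X$; a fortiori some pair of points separates $X$, contradicting (\ref{KSCT_pair_of_points}). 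Everything then rests on two ``intrinsic'' theorems, proved inside $X$ with no ambient plane.

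\emph{Intrinsic Jordan curve theorem.} For every Jordan curve $J\subseteq X$, the set $X\setminus J$ has exactly two components $U_1,U_2$, and $\bdry U_1=\bdry U_2=J$. There are at least two components by (\ref{KSCT_Jordan}). Each $\bdry U_i$ is a closed subset of the circle $J$, and a Torhorst-type argument shows $\bdry U_i=J$: otherwise $J\setminus\bdry U_i$ contains an open subarc of $J$, through which one could route an arc joining $U_i$ to a second component, contradicting that they are distinct. Hence $\overline{U_i}=U_i\cup J$, so $\overline{U_1}\cap\overline{U_2}=J$ and $\overline{U_1}\cup\overline{U_2}=X$. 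That there are at most two components is where (\ref{KSCT_pair_of_points}) is used at full strength: from three domains $U_1,U_2,U_3$, each with boundary all of $J$, one splices arcs chosen inside the $U_j$ with subarcs of $J$ to exhibit a \emph{pair of points} whose removal disconnects $X$ --- the classical ``no triod of domains'' argument, executed intrinsically.

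\emph{Intrinsic Schoenflies theorem and conclusion.} The core step is that the closure of any complementary domain $U$ of any Jordan curve is a $2$-cell, by a homeomorphism carrying $\bdry U$ to $\partial D^2$. Lacking an ambient plane, $\overline U\homeo D^2$ is built as a limit: fix a homeomorphism $\bdry U\to\partial D^2$ and a nested countable family of crosscuts of $\overline U$ mimicking a dyadic subdivision of $D^2$ --- each crosscut, joined to a subarc of $\bdry U$, being a Jordan curve of $X$ that bounds (by the intrinsic Jordan curve theorem) a subregion of $U$ --- with meshes forced to $0$ by uniform local connectedness of $X$; the resulting map $D^2\to\overline U$ is a uniform limit, and any collapsing of pieces is absorbed by passing to the associated upper semicontinuous monotone decomposition and quoting Moore's decomposition theorem, whose quotient of $D^2$ is again $D^2$. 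Applying this to the Jordan curve $J$ found above, $\overline{U_1}$ and $\overline{U_2}$ are closed $2$-cells, and $X=\overline{U_1}\cup\overline{U_2}$ with $\overline{U_1}\cap\overline{U_2}=J$ corresponding to $\partial D^2$ in each. Thus $X$ is two copies of $D^2$ glued along their boundary circles; since every self-homeomorphism of $S^1$ extends over $D^2$, this union is $\sphere$, so $X\homeo\sphere$.

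The main obstacle is precisely these two intrinsic theorems, which between them make up essentially all of Bing's proof. The two genuinely hard points are: (i) in ``at most two domains'', producing a pair of points --- and no more --- whose removal disconnects $X$, which demands a delicate analysis of how three domains can share the circle $J$ and is exactly what calibrates hypothesis (\ref{KSCT_pair_of_points}); and (ii) in the Schoenflies step, the uniform-local-connectedness estimates needed to guarantee that the crosscut subdivisions of $\overline U$ have mesh tending to $0$ and that the resulting monotone decomposition is upper semicontinuous, so that Moore's theorem delivers a genuine disk rather than a wild quotient.
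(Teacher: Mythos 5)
There is a genuine gap here --- in fact the paper does not prove this statement at all: it is quoted as an external result of Bing \cite{Bin46}, so the only ``proof'' in the paper is a citation, and your attempt has to be judged as a standalone proof of Bing's theorem. As such it is an outline rather than a proof. You reduce the statement to two ``intrinsic'' theorems (every Jordan curve $J$ has exactly two complementary domains, each with boundary all of $J$; and the closure of each such domain is a $2$-cell), but you do not prove either of them, and you say yourself that they ``make up essentially all of Bing's proof.'' The steps you do sketch are exactly the ones that need the work: the Torhorst-type claim $\bdry U_i=J$ is not obtained by ``routing an arc through an open subarc of $J$'' --- in an abstract Peano continuum you would first need accessibility of points of $J$ from the domains and some non-separation input (Bing in fact has to show, from the pair-of-points hypothesis, that no arc separates $X$), and without that the one-line argument is circular; the ``no triod of domains'' step is asserted, not executed, and it is precisely where hypothesis (\ref{KSCT_pair_of_points}) must be calibrated; and the Schoenflies step presupposes, without proof, that every crosscut of $U$ splits $U$ into exactly two domains, that every point of $J$ is accessible from $U$, that $U$ is uniformly locally connected, and that the limiting decomposition is upper semicontinuous with non-separating, cell-like elements so that Moore's theorem applies. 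None of these are established, so the attempt does not constitute a proof of the statement.

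If your intent was to use the theorem the way the paper does, the correct move is simply to cite Bing \cite{Bin46} (the theorem is deep and its proof is well outside the scope of the paper); if your intent was to reprove it, you would need to supply complete arguments for the two intrinsic theorems, which is essentially reproducing Bing's paper.
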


The outline of the proof of the main result, and of the paper, is as follows.
In Section \ref{sec: preliminaries}, we provide the necessary preliminaries for the proof.
We end Section \ref{sec: preliminaries} with a summary of the additional assumptions one can make on the group $G$.
In Lemma \ref{lem: non-separation} in Section \ref{sec: no separating pair of points}, we prove that no pair of points separates the boundary of such a group. 
In Sections \ref{sec: outline}--\ref{sec: Jordan thm}, we prove that any Jordan curve separates, using ideas from Jordan's original proof of the Jordan Curve Theorem which we outline in Section \ref{sec: outline} (see Hales \cite{Hal07} for details).

We finish the introduction with an example of a one-ended hyperbolic group that has enough quasiconvex codimension-1 surface subgroups and whose boundary is not a sphere.

Let $L$ be the flag complex triangulation of the 2-dimensional torus consisting of 96 triangles illustrated in Figure \ref{fig: counterexample}, where the edges of the hexagon labeled by $a$, $b$ and $c$ are glued accordingly to obtain a torus. 
Let $G$ be the right-angled Coxeter group associated with $L$. That is, $G$ is the group given by the following presentation.
\[ G=\left<  s\in L^{(0)} \middle| s^2=1, \forall s\in L;\; [s,s']=1, \forall \{s,s'\}\in L^{(1)} \right> \]

\begin{figure}
\begin{center}
\input{./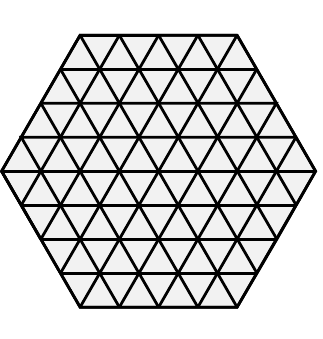_tex}
\caption{The torus link of the counterexample}
\label{fig: counterexample}
\end{center}
\end{figure}

The group $G$ acts properly and cocompactly on the Davis complex $\CC{X}$ associated with $L$, which happens to be the unique CAT(0) cube complex whose link is isomorphic to $L$ at each vertex (See \cite{Laz14}). 

The group $G$ is hyperbolic since $L$ has no isometrically embedded geodesic of length $2\pi$ (when considered with the spherical metric), and it is one-ended since the link $L$ does not have a separating simplex.
The hyperplane stabilizers are isomorphic to the right-angled Coxeter groups associated with the link of a vertex in $L$, which are 6-cycle graphs in our case. 
Thus, the hyperplane stabilizers are Fuchsian groups, which implies that $G$ has enough codimension-1 surface subgroups.

Let $p$ be the projection map from $\bdry\CC{X}$ to the link $\Link(\CCv{x},\CC{X})$ of a vertex $\CCv{x}\in\CC{X}$, that assigns to each boundary point $\xi$ the direction in $\Link(\CCv{x},\CC{X})$ of the unique geodesic that connects $\CCv{x}$ to $\xi$. Since $\CC{X}$ has extendable geodesics one can lift any curve on $\Link(\CCv{x})$ to the boundary $\bdry\CC{X}$. Therefore, the induced map $\pi_1(\bdry\CC{X}) \to \pi_1 (\Link(\CCv{x},\CC{X}))\simeq\Z^2$ is onto. Hence, the boundary $\bdry G$ is not homeomorphic to $\sphere$.

\paragraph*{Acknowledgements.}
The authors would like to thank Michah Sageev for his support and helpful comments. We also thank Antoine Clais and Jason Manning for finding a mistake in a previous version of this paper. 

\section{Preliminaries}
\label{sec: preliminaries}

We begin by a survey of definition and results concerning CAT(0) cube complexes and quasiconvex subgroups of hyperbolic groups. For a more complete survey see, for example, Sageev \cite{Saa12}.

\subsection{CAT(0) cube complexes and hyperplanes}

\begin{definition}[CAT(0) cube complexes]
A \emph{cube complex} is a complex made by gluing unit Euclidean cubes (of varying dimension) along their faces using isometries.
A cube complex is \emph{CAT(0)} if it is CAT(0) with respect to the quotient metric induced by endowing each cube with the Euclidean metric (See \cite{BrHa99}).
\end{definition}

As was first observed by Sageev \cite{Sag95}, CAT(0) cube complexes naturally carry a combinatorial structure given by the associated hyperplanes and halfspaces.
We now recall their definition and properties.

\begin{definition}[hyperplanes]
Let $\CC{X}$ be a CAT(0) cube complex.
The equivalence relation on the edges of $\CC{X}$ generated by $\CCc{e}\sim\CCc{e}'$ when $\CCc{e}$ and $\CCc{e}'$ are parallel edges in a square of $\CC{X}$ is called the parallelism relation.
The equivalence classes of edges under the parallelism relation are the \emph{combinatorial hyperplanes} of $\CC{X}$.
The convex hull of the midpoints of the edges of a combinatorial hyperplane is called a \emph{hyperplane}.
We denote the set of hyperplanes in $\CC{X}$ by $\Hyp{H}=\Hyp{H}(\CC{X})$.
\end{definition}

The main features of hyperplanes are summed in the following. 

\begin{proposition}
Let $\CC{X}$ be a CAT(0) cube complex. Then every hyperplane $\hyp{h}\in \Hyp{H}$ is naturally a CAT(0) cube complex of codimension-1 in $\CC{X}$, and $\CC{X}\setminus \hyp{h}$ has exactly two components.
\end{proposition}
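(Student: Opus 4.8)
The plan is to establish the local picture inside a single cube, assemble from it the global cube-complex structure on a hyperplane, and then deduce the separation statement from simple connectivity of $\CC{X}$ via a $\Z/2$-intersection (parity) argument.

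First I would analyze a single cube $C\homeo[0,1]^n$ of $\CC{X}$. Its edges fall into $n$ parallelism classes inside $C$, one per coordinate direction, and the crucial first claim is that these $n$ classes lie in $n$ \emph{distinct} global parallelism classes; equivalently, no hyperplane meets $C$ in two transverse midcubes, and in particular every square of $\CC{X}$ has its four edges split into exactly two classes of two opposite edges. Granting this, each hyperplane $\hyp{h}$ meets each cube $C$ either trivially or in a single \emph{midcube} $\{x_i=\tfrac12\}$, a unit cube of dimension $\dim C-1$, so the codimension-$1$ assertion is immediate. This local claim is exactly where the CAT(0) hypothesis is used, and I expect it to be the main obstacle: the standard tool is a minimal-area disc diagram argument showing that a cube with two coinciding coordinate classes — or, more generally, an edge-loop crossing $\hyp{h}$ an odd number of times — would yield a non-contractible loop or a geodesic bigon, contradicting CAT(0)-ness. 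Concretely, I would prove that the mod-$2$ count $\iota(\gamma)\in\Z/2$ of edges of an edge-loop $\gamma$ lying in the class of $\hyp{h}$ depends only on $\gamma$, by showing each square contributes $0$ and invoking simple connectivity of $\CC{X}$.

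Next I would build the cube complex structure on $\hyp{h}$: declare its cubes to be the midcubes of the cubes of $\CC{X}$ that $\hyp{h}$ meets, glued along faces whenever the ambient cubes share a face whose edges lie in the class of $\hyp{h}$. Its vertices are the midpoints $m_{\CCc{e}}$ of the edges $\CCc{e}$ in the class, and a direct check identifies $\Link(m_{\CCc{e}},\hyp{h})$ with the link of the edge $\CCc{e}$ in $\CC{X}$; since all such links are flag complexes, $\hyp{h}$ satisfies Gromov's link condition and is nonpositively curved. To upgrade this to CAT(0) it remains to see $\hyp{h}$ is simply connected, which follows either from the disc-diagram analysis above or from the observation that the \emph{carrier} $N(\hyp{h})$ — the union of the cubes of $\CC{X}$ meeting $\hyp{h}$ — is isomorphic to $\hyp{h}\times[0,1]$ and admits a combinatorial retraction onto $\hyp{h}$, so $\pi_1(\hyp{h})$ injects into $\pi_1(N(\hyp{h}))\leq\pi_1(\CC{X})=1$.

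Finally, for the separation statement I would use the well-definedness of $\iota$ proved above. It yields a $2$-coloring $c\colon\CC{X}^{(0)}\to\Z/2$ with $c(v)\neq c(w)$ precisely when the edge $\{v,w\}$ lies in the class of $\hyp{h}$, and this color is locally constant on $\CC{X}\setminus\hyp{h}$ (any point of a cube $C$ off the midcube of $\hyp{h}$ lies on one fixed side of that midcube, all of whose vertices share a color). Hence $\CC{X}\setminus\hyp{h}$ is contained in the disjoint open sets determined by the two colors and has \emph{at most} two components. It has \emph{at least} two because the carrier splits as $N(\hyp{h})\setminus\hyp{h}\homeo\hyp{h}\times\big([0,1]\setminus\{\tfrac12\}\big)$, whose two sides carry the two different colors of $c$ and therefore cannot lie in a common component of $\CC{X}\setminus\hyp{h}$. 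Combining the two bounds, $\CC{X}\setminus\hyp{h}$ has exactly two components.
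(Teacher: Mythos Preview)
Your proposal is a correct outline of the standard proof of this well-known fact about CAT(0) cube complexes: the local midcube picture, the carrier $N(\hyp{h})\cong\hyp{h}\times[0,1]$, the link identification to verify Gromov's condition on $\hyp{h}$, and the mod-$2$ intersection count to get exactly two complementary components. This is essentially the argument in Sageev's original paper and in the survey \cite{Saa12} that the paper cites.

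However, the paper itself does \emph{not} prove this proposition. It is stated in the preliminaries section (``The main features of hyperplanes are summed in the following'') as a standard background fact, without proof and with an implicit pointer to the literature. So there is no ``paper's own proof'' to compare against; you have supplied a proof where the authors chose to cite one.
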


\begin{definition}
The components of $\CC{X}\setminus\hyp{h}$ are the \emph{halfspaces} of $\CC{X}$ associated to (or bounded by) $\hyp{h}$.
The set of halfspaces is denoted by $\Hs{H}=\Hs{H}(\CC{X})$.
There is a natural map $\hat{}:\Hs{H}\to\Hyp{H}$ which maps each halfspace to its bounding hyperplane.
The set of halfspaces also carries a natural complementation involution $\comp{}:\Hs{H}\to\Hs{H}$ which maps each halfspace $\hs{h}$ to the other component of $\CC{X}\setminus\hyp{h}$.
\end{definition}

\subsection{Cubulating hyperbolic groups}

Recall the following definitions about quasiconvex subgroups of Gromov hyperbolic groups.

\begin{definition}
A quasiconvex subgroup $H$ of a hyperbolic group $G$ is \emph{codimension-1} if $G/H$ has more than one end.
The group $G$ has \emph{enough codimension-1 subgroups} if every two distinct points in $\bdry{G}$ can be separated by the limit set of a codimension-1 quasiconvex subgroup.
\end{definition}

By results of Sageev \cite{Sag95}, Gitik-Mitra-Rips-Sageev \cite{GMRS98} and Bergeron-Wise \cite{BeWi12} we have the following.

\begin{theorem}
Let $G$ be a hyperbolic group with enough codimension-1 subgroups, then $G$ acts properly cocompactly on a finite dimensional CAT(0) cube complex whose hyperplane stabilizers belong to the family of codimension-1 subgroups.
\end{theorem}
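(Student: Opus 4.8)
The plan is to run Sageev's cubulation construction \cite{Sag95} on a wall space built from finitely many of the given codimension-1 subgroups, and then to use hyperbolicity together with quasiconvexity to check that the resulting action is proper and cocompact and that the complex is finite dimensional.

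First I would recall how a single codimension-1 quasiconvex subgroup $H\leq G$ produces a $G$-invariant wall structure on the Cayley graph of $G$. Since $G/H$ has more than one end, a neighborhood of $H$ in $\mathrm{Cay}(G)$ has at least two deep complementary components, which can be $2$-colored to produce a halfspace $\hs{h}_H\subseteq G$ whose bounding wall is stabilized by a subgroup commensurable with $H$; the translates $\{g\hs{h}_H : g\in G\}$ then form a $G$-invariant halfspace system, and since $H$ is quasiconvex in the hyperbolic group $G$ only finitely many of these translates separate any two vertices of $G$, so the interval condition needed for \cite{Sag95} holds. Using the hypothesis that $G$ has enough codimension-1 subgroups, I would then select codimension-1 quasiconvex subgroups $H_1,\dots,H_n$ whose walls separate every pair of distinct points of $\bdry G$: for each $H_i$ the set of pairs $\xi\neq\eta$ that are separated by some $G$-translate of the limit set of $H_i$ is open and $G$-invariant in the space of distinct pairs of $\bdry G$, these sets cover that space, and since $G$ acts cocompactly on it, finitely many of the $H_i$ already suffice (cf.\ \cite{BeWi12}). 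Feeding the union $\mathcal{W}$ of the corresponding $G$-invariant halfspace systems into Sageev's construction yields a CAT(0) cube complex $\CC{X}$ with a $G$-action, whose hyperplanes are $G$-equivariantly identified with the walls of $\mathcal{W}$ and whose hyperplane stabilizers are the corresponding wall stabilizers.

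It remains to verify properness, finite dimensionality, and cocompactness. For properness, observe that each vertex of $\CC{X}$ determines a choice of halfspace for every wall, so its stabilizer is contained in the stabilizer of one halfspace for each wall; were this stabilizer infinite it would contain a loxodromic element and hence have a limit set with two distinct points $\xi,\eta$, but choosing a wall of $\mathcal{W}$ separating $\xi$ from $\eta$ then contradicts that the stabilizer preserves a halfspace on one side of that wall, so all cube stabilizers are finite. For finite dimensionality, $\dim\CC{X}$ is the maximal number of pairwise-crossing walls in $\mathcal{W}$, and by Gitik-Mitra-Rips-Sageev \cite{GMRS98} each $H_i$ has finite height, so only boundedly many pairwise-crossing walls can be translates of a fixed $\hs{h}_{H_i}$; a Ramsey-type argument over $i=1,\dots,n$ then bounds the total. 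Cocompactness is the main obstacle: following Sageev one shows that in the hyperbolic group $G$ the walls meeting any bounded region occur in a uniformly bounded configuration, so that $\CC{X}$ has only finitely many $G$-orbits of cubes — this is the step in which hyperbolicity and quasiconvexity are combined most essentially, and it is precisely the content of the boundary criterion of Bergeron-Wise \cite{BeWi12}. Finally, cocompactness together with finiteness of cube stabilizers makes $\CC{X}$ locally finite, hence a proper metric space, so the $G$-action is properly discontinuous; and each hyperplane stabilizer, being a wall stabilizer and therefore commensurable to a conjugate of some $H_i$, is again a codimension-1 quasiconvex subgroup, as required.
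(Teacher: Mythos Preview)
The paper does not give its own proof of this theorem; it simply attributes the result to Sageev \cite{Sag95}, Gitik--Mitra--Rips--Sageev \cite{GMRS98}, and Bergeron--Wise \cite{BeWi12} and moves on. Your sketch is a faithful and essentially correct outline of how those three references combine to yield the statement: Sageev's wallspace cubulation, the Bergeron--Wise cocompactness/finiteness argument via the cocompact action on the space of distinct pairs in $\bdry G$, and finite height from \cite{GMRS98} for the dimension bound. So you are in complete agreement with the paper's (implicit) approach.
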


This theorem is the starting point for our proof, since the group $G$ in Theorem \ref{main result} is assumed to have enough codimension-1 surfaces subgroups, it therefore acts properly and cocompactly on a finite dimensional CAT(0) cube complex $\CC{X}$ with surface group hyperplane stabilizers.
The proof of the main theorem is then based on understanding how the limit sets of hyperplanes interact and how they determine the topology of the boundary of $G$.

Recall that any quasiconvex subgroup $H$ in $G$ is itself a hyperbolic group and there is a well-defined homeomorphic embedding $\bdry{H}\to\bdry{G}$ whose image is the limit set of $H$ in $\bdry{G}$.
In particular, in our situation, the stabilizer of each hyperplane $\hyp{h}$ in $\CC{X}$ acts properly cocompactly on the hyperplane and thus is a quasiconvex surface subgroup. Its boundary is therefore a circle, which embeds as the limit set of $\hyp{h}$ in $\bdry{G}=\bdry\CC{X}$. We denote this limit set by $\bhyp{h}$.

However, for halfspaces we have to distinguish between two kinds of limit sets: the \emph{closed limit set} $\bhs{h}$ and the \emph{open limit set} $\obhs{h}$. 
The former is simply the limit set of $\hs{h}$ as a subset of $\CC{X}$, while the latter is the set of all geodesic rays in $\bhs{h}$ which do not stay at a bounded distance from $\hyp{h}$. Equivalently, we have $\bhs{h}=\bhyp{h}\sqcup\obhs{h}$.
As their names suggest the open (resp. closed) limit sets are indeed open (resp. closed) subsets of $\bdry G$. Moreover, we have the following.

\begin{lemma}\label{basis for topology}
The open limit sets of halfspaces form a basis for the topology of $\bdry G = \bdry \CC{X}$.
\end{lemma}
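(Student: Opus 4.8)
The plan is to show that the open limit sets $\obhs{h}$ are open, and that every open set in $\bdry G$ containing a point $\xi$ contains some $\obhs{h}$ with $\xi \in \obhs{h}$. Since $\bdry G$ is metrizable, it suffices to work with basic metric balls. The key geometric input is Sageev's correspondence between halfspaces of $\CC{X}$ and the combinatorial structure of $\CC{X}$: for a geodesic ray $\gamma$ representing a boundary point $\xi$, the set of halfspaces $\hs{h}$ with $\xi \in \obhs{h}$ is exactly the set of halfspaces that $\gamma$ \emph{enters and eventually stays in}, equivalently the halfspaces whose bounding hyperplane $\gamma$ crosses (when $\gamma$ is chosen combinatorially, e.g.\ starting from a base vertex). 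So I first need to record, as a preliminary observation (or cite it from Sageev-type results), that $\xi \in \obhs{h}$ if and only if every geodesic ray to $\xi$ eventually lies deep inside $\hs{h}$, i.e.\ $d(\gamma(t), \hyp{h}) \to \infty$; and that $\xi \in \bhyp{h}$ iff the geodesic to $\xi$ stays at bounded distance from $\hyp{h}$.

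\textbf{Openness of $\obhs{h}$.} Fix $\xi \in \obhs{h}$ and a geodesic ray $\gamma$ from the base vertex $\CCv{x}$ to $\xi$; by hyperbolicity and the above, $\gamma$ crosses $\hyp{h}$ at some finite parameter, and there is a later vertex $\CCv{v}$ on $\gamma$ deep in $\hs{h}$, say at distance $\geq R$ from $\hyp{h}$ with $R$ larger than the hyperbolicity constant and the crossing-time bound. Any boundary point $\eta$ close enough to $\xi$ (in the visual metric based at $\CCv{x}$) has its geodesic ray fellow-travel $\gamma$ up to and beyond $\CCv{v}$; in particular this ray also eventually separates from $\hyp{h}$, so $\eta \in \obhs{h}$. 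This uses standard hyperbolic-boundary facts: small visual distance forces long fellow-travelling of geodesics, and a geodesic that is far from a quasiconvex set $\hyp{h}$ at a point and has already crossed it cannot return.

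\textbf{The $\obhs{h}$'s are a basis.} Given $\xi \in \bdry G$ and $\epsilon > 0$, I want a halfspace $\hs{h}$ with $\xi \in \obhs{h} \subseteq B(\xi, \epsilon)$. Take a geodesic ray $\gamma$ from $\CCv{x}$ to $\xi$; for $t$ large, the hyperplanes crossed by $\gamma$ after time $t$ are "far out" in the sense that any ray crossing them agrees with $\gamma$ for a long time, hence lands in $B(\xi,\epsilon)$. Concretely, pick an edge $\CCc{e}$ of $\gamma$ at parameter $t$ with $t$ large, let $\hyp{h}$ be its hyperplane and $\hs{h}$ the halfspace containing the tail $\gamma|_{[t,\infty)}$. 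Then $\xi \in \obhs{h}$ (the tail goes to infinity inside $\hs{h}$, away from $\hyp{h}$, since $\CC{X}$ is a proper cube complex and $\gamma$ is a combinatorial geodesic so it never recrosses $\hyp{h}$). And $\obhs{h} \subseteq B(\xi,\epsilon)$: if $\eta \in \obhs{h}$, its geodesic ray $\gamma'$ eventually lies in $\hs{h}$ arbitrarily far from $\hyp{h}$; by thin triangles (comparing $\gamma$, $\gamma'$, and a geodesic between their tails) $\gamma'$ must pass within bounded distance of the point where $\gamma$ crosses $\hyp{h}$, which forces $\gamma$ and $\gamma'$ to fellow-travel for a time growing with $t$, giving $d(\xi,\eta) < \epsilon$ once $t$ is large enough. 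The quantitative link "long fellow-travelling $\Rightarrow$ small visual distance" is the standard definition of the topology on $\bdry G$.

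\textbf{Main obstacle.} The delicate point is the last implication: showing that $\eta \in \obhs{h}$ forces the geodesic ray to $\eta$ to fellow-travel $\gamma$ for a long time. The subtlety is that $\obhs{h}$ is defined via \emph{open} limit set (rays leaving every neighbourhood of $\hyp{h}$), not merely "ray enters $\hs{h}$ once". I expect the cleanest route is: (i) any ray to a point of $\bhs{h} = \bhyp{h} \sqcup \obhs{h}$ must cross or accumulate on $\hyp{h}$; (ii) for points of $\obhs{h}$ the ray genuinely crosses $\hyp{h}$ and then stays in $\hs{h}$, and by convexity of halfspaces plus hyperbolicity the crossing point is within bounded distance of $\gamma$'s crossing point; (iii) conclude fellow-travelling before the crossing from thinness of the triangle on $\CCv{x}$, $\gamma$'s crossing point, $\gamma'$'s crossing point, and fellow-travelling after the crossing from the fact that both tails go to infinity inside the convex set $\hs{h}$ with bounded Hausdorff behaviour near $\gamma$. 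Assembling these bounds uniformly in $\eta$ and translating them into the visual metric is the technical heart; everything else is soft.
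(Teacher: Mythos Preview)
Your argument has a genuine gap at the step ``$\xi \in \obhs{h}$''. You claim this follows because the combinatorial geodesic $\gamma$, having crossed $\hyp{h}$, never recrosses it. But ``never recrosses'' only shows that the tail of $\gamma$ stays in the halfspace $\hs{h}$, hence $\xi \in \bhs{h}$; it does not show that $d(\gamma(t),\hyp{h})\to\infty$, which is what $\xi\in\obhs{h}$ requires. Indeed this can fail: take any combinatorial geodesic ray $\gamma'$ lying in the carrier of $\hyp{h}$ on the $\hs{h}$-side (such rays exist whenever $\hyp{h}$ is unbounded) and prepend a single edge dual to $\hyp{h}$. The resulting ray is still a combinatorial geodesic, it crosses $\hyp{h}$ at its first edge, yet its endpoint lies in $\bhyp{h}$ and hence not in $\obhs{h}$. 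So the equivalence you assert in your preamble, between ``$\xi\in\obhs{h}$'' and ``the combinatorial geodesic to $\xi$ crosses $\hyp{h}$'', is false, and the justification you give at the crucial step does not distinguish large $t$ from small. The same issue infects your openness argument: passing near a point deep in $\hs{h}$ does not by itself force a ray to diverge from $\hyp{h}$.

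The paper's proof avoids this difficulty by a different device. Rather than selecting a single hyperplane crossed by $\gamma$, it uses finite-dimensionality of $\CC{X}$ to extract, from the set $\Hs{H}_\gamma$ of halfspaces the ray eventually enters, an infinite \emph{descending chain} $\hs{h}_1\supset\hs{h}_2\supset\cdots$. Nesting then delivers both needed facts simultaneously: since $\hyp{h}_{n+1},\hyp{h}_{n+2},\ldots$ all separate $\hyp{h}_n$ from the tail of $\gamma$, one gets $d(\gamma(t),\hyp{h}_n)\to\infty$ and hence $\xi\in\obhs{h}_n$; and since $d(\CCv{x}_0,\hs{h}_n)\to\infty$, convexity of halfspaces forces $\diam(\bhs{h}_n)\to 0$, giving the local basis directly, without any thin-triangle bookkeeping. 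Your route is salvageable --- one can show, using hyperbolicity together with the dimension bound, that only finitely many hyperplanes crossed by a fixed combinatorial geodesic to $\xi$ can have $\xi$ in their limit set --- but that is a separate nontrivial argument which you do not supply and which the descending-chain trick renders unnecessary.
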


\begin{proof}
Let $\CCv{x}_0$ be a fixed vertex in $\CC{X}$.
Identify $\bdry\CC{X}$ with the visual Gromov boundary from $\CCv{x}_0$.
Let $\xi\in\bdry\CC{X}$, and let $\gamma$ be a geodesic ray such that $\gamma(\infty)=\xi$ and $\gamma(0)=\CCv{x}_0$.
Let $\Hs{H}_\gamma$ be the set of halfspaces $\hs{h}$ such that $\gamma\setminus\hs{h}$ is bounded.
The set $\Hyp{H}_\gamma$ is infinite and does not contain pairs of disjoint halfspaces.
Since $\CC{X}$ is finite dimensional, $\Hyp{H}_\gamma$ must contain an infinite descending chain of halfspaces $\hs{h}_1\supset \hs{h}_2 \supset\ldots$.
Moreover $d(\CCv{x}_0,\hs{h}_n)\to\infty$, and since halfspaces are convex, it follows that $\diam(\bhs{h}_n)\to 0$ (with respect to a visual metric on the boundary).
Moreover, since $\hs{h}_n$ form a descending chain of halfspaces that cross $\gamma$ it follows that $\gamma$ does not remain within a bounded distance from the bounding hyperplanes $\hyp{h}_n$ for all $n$.
Thus, $\xi\in\obhs{h}_n$ and $\obhs{h}_n$ form a local basis at $\xi$.
\end{proof}

Recall from Caprace-Sageev \cite{CaSa11} that a cube complex is \emph{essential} if there is no halfspace which is at bounded distance from its bounding hyperplane.
The rank rigidity results in \cite{CaSa11} imply that if $G$ is a hyperbolic group that acts properly and cocompactly on a CAT(0) cube complex $\CC{X}$, then there is a convex $G$-invariant subcomplex $\CC{Y}\subseteq\CC{X}$ which is essential, and the open limit set of every halfspace is non-empty.
For that reason, we assume from now on that $\CC{X}$ is essential. 

\subsection{Properties of groups at infinity}
\label{subsec:properties at infinity}

In this section we define topological properties of spaces and groups at infinity.
We begin with the following general definition.

\begin{definition}
Let $F$ be a contravariant (resp. covariant) functor from the category of topological spaces $\mathbf{Top}$ to the category of groups $\mathbf{Grp}$.
Then, Let $F_\infty$ (resp. $F^\infty$) be the functor from the category of non-compact topological spaces (with proper continuous maps as morphisms) that assigns to a space $X$ the direct limit (resp. inverse limit) of the directed set $F(X\setminus K)$ where $K$ ranges over the compact subsets in $X$.
\end{definition}

Applying the above definition to the contravariant functor $H^1(\cdot\; ;R)$, we say that $X$ has \emph{vanishing first  cohomology over a ring $R$ at infinity} if for every compact set $K\subset X$ and every 1-cocycle $\alpha$ on $X\setminus K$ there exists a bigger compact set $K'$ such that $\alpha$ restricted to $X\setminus K'$ is a coboundary.

Similar definitions can be defined by applying the above definition to $\pi_1$, $H_n (\cdot\; ;R)$ and $H^n (\cdot\; ;R)$. We recall in particular the following well-studied notion.

A topological space $X$ is said to be \emph{simply connected at infinity} if for every compact set $K$ there exists a compact set $K'\supseteq K$ such that any loop in $X\setminus K'$ is null homotopic in $X\setminus K$, i.e, if the map $\pi_1 (X\setminus K')\to \pi_1 (X\setminus K)$ is trivial.
%

We remark that any space which is simply connected at infinity has, in particular, vanishing first cohomology over $\Z/2$.

\begin{definition}

Let $G$ be a group of type F (i.e, has a compact $K(G,1)$), and let $X$ be the universal cover of a compact $K(G,1)$ of $G$. We say that $G$ has \emph{vanishing first cohomology over a ring $R$ at infinity} (resp. is \emph{simply connected at infinity}) if $X$ has vanishing first cohomology over a ring $R$ at infinity (resp. is simply connected at infinity).

More generally, if $G$ has a finite index subgroup $H$ of type F, then we say that $G$ has \emph{vanishing first cohomology over a ring $R$ at infinity} (resp. is \emph{simply connected at infinity}) if $H$ has vanishing first cohomology over a ring $R$ at infinity (resp. is simply connected at infinity).
\end{definition}

In \cite{Bri93}, Brick showed that being finitely presented at infinity is a property of finitely presented groups, which is a quasi-isometric invariant. 

\begin{lemma}
The notions defined above do not depend on the compact $K(G,1)$ and the choice of a finite index subgroup.
\end{lemma}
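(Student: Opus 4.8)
The plan is to show that the two notions in question -- \emph{vanishing first cohomology over $R$ at infinity} and \emph{being simply connected at infinity} -- are invariant under (a) changing the compact $K(G,1)$ for a group of type F, and (b) passing between commensurable groups of type F, so that the definition for a general group (via some finite-index type-F subgroup) is unambiguous. I would organize the proof in these two parts.

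First, independence of the chosen compact $K(G,1)$. Given two compact $K(G,1)$'s for $G$, their universal covers $X$ and $X'$ are simply connected CW-complexes on which $G$ acts freely, properly and cocompactly, hence they are $G$-equivariantly homotopy equivalent, and in particular proper-homotopy equivalent (a cocompact free action lets one upgrade the homotopy equivalence and homotopies to proper maps, since preimages of compact sets are compact). The functors $F_\infty$ and $F^\infty$ constructed from a functor $F$ on $\mathbf{Top}$ are, by construction, functorial on the category of non-compact spaces with proper maps; a proper homotopy equivalence therefore induces an isomorphism on $F_\infty$ (resp. $F^\infty$). Applying this to $F = H^1(\cdot\,;R)$ gives that ``$H^1(\cdot\,;R)$ at infinity vanishes'' is independent of the model; applying it to $\pi_1$ gives the same for simple connectivity at infinity. (One should note the mild point that $\pi_1$ at infinity depends on basepoint/end data, but on a one-ended space -- or working end-by-end -- the statement ``every loop far out bounds further in'' is manifestly a proper-homotopy invariant, which is all that is needed.)

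Second, independence of the finite-index type-F subgroup. If $H_1, H_2 \le G$ are finite-index and of type F, then $H_1 \cap H_2$ is finite index in both and of type F (it acts freely, properly, cocompactly on the universal cover of a compact $K(H_i,1)$, which is also a $K(H_1\cap H_2,1)$ up to the already-established model-independence). So it suffices to compare a finite-index type-F subgroup $H \le G$ with $G$ itself when $G$ is also of type F, and more generally to observe that if $X$ is the universal cover of a compact $K(G,1)$ then $X$ is simultaneously the universal cover of a compact $K(H,1)$; the space is literally the same, so the property at infinity is the same. Combined with Brick's result \cite{Bri93} quoted just above -- that finite presentability at infinity is a QI-invariant of finitely presented groups -- and the elementary fact that commensurable groups are quasi-isometric, one gets the full statement; indeed the cleanest route for the cohomological and $\pi_1$-at-infinity statements is to note they too are proper-homotopy, hence quasi-isometry, invariants among groups admitting cocompact models, and commensurable groups share such models.

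The main obstacle is not any single hard step but making precise the passage ``$G$-equivariant homotopy equivalence of cocompact free $G$-CW-complexes $\Rightarrow$ proper homotopy equivalence,'' i.e. checking that all the maps and homotopies involved can be taken proper. The key observation that dispatches it: a continuous $G$-equivariant map between free cocompact $G$-CW-complexes is automatically proper, because the preimage of a compact set is $G$-equivariant with compact image in the (compact) quotient and the action is free and proper; the same applies to the tracking homotopies. Once this is in hand, everything reduces to functoriality of $F_\infty$ and $F^\infty$ under proper homotopy equivalences, which is immediate from their direct/inverse-limit construction over the directed system of complements of compacta.
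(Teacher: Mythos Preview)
Your proposal is correct and follows essentially the same approach as the paper: for part (a) you use that a $G$-equivariant homotopy equivalence between universal covers of compact $K(G,1)$'s is proper and hence induces isomorphisms on $F_\infty$, and for part (b) you reduce via $H_1\cap H_2$ to the case $H\le G$ of finite index and observe that the universal cover of a compact $K(G,1)$ is literally the universal cover of a compact $K(H,1)$ (its finite cover). Your additional remarks---justifying properness of equivariant maps between cocompact free $G$-complexes, handling the $\pi_1$ basepoint issue, and the aside on QI-invariance via Brick---go slightly beyond what the paper writes out but do not change the strategy.
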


\begin{proof}
Let $X$ and $Y$ be two compact $K(G,1)$. 
Then there exists a $G$-equivariant homotopy equivalence $f:\uc{X}\to \uc{Y}$ which is proper.
We prove it for the $n$-th cohomology functor.
Since this map is proper it induces a map $H^n(Y\setminus K) \to H^n(X\setminus f^{-1}(K)) \to  H^n_\infty (X)$, and thus, a map $H^n_\infty(f):H^n_\infty(Y)\to H^n_\infty(X)$.
Similarly, the $G$-equivariant homotopy inverse map $g:\uc{Y}\to \uc{X}$ induces a map $H^n_\infty(g):H^n_\infty(X)\to H^n_\infty(Y)$.
The composition $f\circ g$ induces a map $H^n_ \infty (f)\circ H^n_\infty (g):H^n_ \infty (X)\to H^n_\infty (X)$. Since $f\circ g$ is homotopic to the identity by an homotopy of $G$-equivariant proper maps, it follows that the induced map in $H^n_\infty$ is the identity map.  Similarly $H^n_\infty(g)\circ H^n_\infty(f)$ is the identity map.

Let $H_1,H_2$ be two finite-index type F subgroups of $G$. Then $H_1\cap H_2$ is also a finite-index type F subgroup of $G$.
Therefore, one can assume $H_1\le H_2 \le G$. But in this case any compact $K(G,1)$ for $H_2$ has a finite cover which is a $K(G,1)$ of $H_1$. This completes the proof since the defined notions only depend on the universal cover of the finite $K(G,1)$.
\end{proof}

Since every torsion-free hyperbolic group $G$ is of type F (for example, its Rips complex for sufficiently large parameter $r$ is the universal cover of a compact $K(G,1)$), the notions defined above can be defined for every torsion-free hyperbolic group.
It is a very well known question whether hyperbolic groups are virtually torsion free. However, in the context of Theorem \ref{main result}, $G$ is a hyperbolic group that acts properly cocompactly on a CAT(0) cube complex. 
It follows from works of Haglund-Wise \cite{HaWi08} and Agol \cite{Ago13} that $G$ is virtually torsion-free.
Therefore, the assumption that $G$ has vanishing first cohomology over $\Z/2$ at infinity is well-defined for $G$.
By replacing $G$ by its finite-index torsion-free subgroup it is enough to prove the theorem for torsion-free groups. Hence, in the remainder of the paper we assume that $G$ is torsion-free.

We remark that since $G$ is torsion-free and acts properly and cocompactly on the CAT(0) cube complex $\CC{X}$ it follows that $\CC{X}$ is the universal cover of a compact $K(G,1)$ (namely $\CC{X}/G$), and thus, by assumption, has vanishing first cohomology over $\Z/2$ at infinity.

\subsection{Summary of preliminaries}
\label{subsec: sum of prelims}

In the previous subsections we have seen that under the assumptions of Theorem \ref{main result}, one can make further assumptions on $G$. 
In this subsection we collect the assumptions we made on $G$ that will be used in the remainder of the paper:
\begin{itemize}
\item The group $G$ is hyperbolic, one-ended and torsion-free.
\item There exists a finite dimensional CAT(0) cube complex $\CC{X}$ such that:
\begin{itemize}
\item The group $G$ acts freely and cocompactly on $\CC{X}$.
\item The hyperplane stabilizers are surface subgroups.
\item The cube complex $\CC{X}$ is essential, and in particular, open limit sets of halfspaces are non-empty.
\item The cube complex $\CC{X}$ has vanishing first cohomology at infinity.
\end{itemize}
\end{itemize}
\section{Connectivity and non-separation by a pair of points}
\label{sec: no separating pair of points}

In this section we prove that no pair of points can separate the boundary of a group that satisfies the assumption of the main theorem (Theorem \ref{main result}).

Recall the following result of Bowditch \cite{Bow98}

\begin{theorem} \label{Bowditch}
Let $G$ be a one-ended word-hyperbolic group such that $\partial G \ne \mathbb{S}^1$, Then:
\begin{enumerate}
\item \label{Bowditch_loc_conn} The boundary is locally connected and has no global cut points.
\item \label{Bowditch_loc_cutpt} The following are equivalent:
\begin{enumerate}
\item The group $G$ does not split essentially over a two-ended subgroup 
\item The boundary $\partial G$ has no local cut point. 
\item The boundary $\partial G$ is not separated by a pair of points.
\item The boundary $\partial G$ is not separated by a finite set of points.
\end{enumerate}
\end{enumerate}
\end{theorem}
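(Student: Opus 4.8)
My plan is to assemble this statement from several landmark results rather than argue from scratch; I will indicate where each ingredient comes from and which constructions carry the weight.

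For part (1): connectedness of $\partial G$ when $G$ is one-ended is the theorem of Bestvina--Mess. The absence of \emph{global} cut points is the ``cut point conjecture''; the plan there, due to Bowditch \cite{Bow98} (with the outstanding cases completed by Swarup, and an alternative route via Levitt), is to encode the global cut points of $\partial G$ together with the pieces they separate into a canonical $\mathbb R$-tree $T$ carrying a non-trivial $G$-action by homeomorphisms. One then shows that a one-ended hyperbolic group cannot act non-trivially on such a tree: feeding the action through the structure theory of $\mathbb R$-tree actions (the Rips machine of Bestvina--Feighn for stable actions, or Levitt's analysis of non-nesting actions), a non-degenerate $T$ would present $G$ as an essential amalgam or HNN extension over a finite or two-ended subgroup with controlled arc stabilizers, contradicting one-endedness; hence $T$ is a point and there is no global cut point. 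Local connectedness of $\partial G$ is then \emph{deduced from} the absence of global cut points: were $\partial G$ not locally connected, classical continuum theory would produce a non-degenerate subcontinuum of convergence, and translating it around by the convergence action of $G$ on $\partial G$ (cocompact on distinct triples) would manufacture a global cut point, a contradiction.

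The heart of part (2) is the equivalence (a)$\Leftrightarrow$(b), which I would obtain from Bowditch's canonical simplicial JSJ tree. By part (1), $\partial G$ is a locally connected continuum with no global cut point; I would organize its \emph{local} cut points --- and the pattern of which pairs of them fail to be separated in $\partial G$ by a third point --- into a pretree, then complete and subdivide to obtain a simplicial tree $T_{\mathrm{JSJ}}$ with a cocompact $G$-action without inversions, the equivariance being automatic from the canonicity of the construction. Its edge stabilizers are two-ended, and its vertex stabilizers come in three flavours: two-ended, maximal hanging Fuchsian (boundary a circle, carrying the incident branch data), and ``rigid'' (whose own boundary has no local cut points). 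The tree is a single point exactly when $\partial G$ has no local cut point, while a non-trivial $T_{\mathrm{JSJ}}$ is precisely an essential splitting of $G$ over a two-ended subgroup; this gives (a)$\Leftrightarrow$(b).

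The remaining links are soft point-set topology once local connectedness is available, tied together through (a)$\Leftrightarrow$(b). If a pair $\{x,y\}$ separates $\partial G = P\sqcup Q$, then, since $\partial G=\overline P\cup\overline Q$ is connected, the set $\overline P\cap\overline Q\subseteq\{x,y\}$ is non-empty, and a small connected neighbourhood of a point of this intersection whose closure misses the other separating point exhibits that point as a local cut point; so (b)$\Rightarrow$(c). Conversely, a finite set separating $\partial G$ of minimal cardinality consists of local cut points --- by minimality each of its points lies in the closure of at least two complementary components, which forces a local separation --- and the JSJ equivalence turns the existence of a local cut point into an essential two-ended splitting of $G$, whose two-point edge-group limit set separates the limit sets of the two sides. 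Combined with the fact that any essential two-ended splitting produces such a separating pair, this gives (c)$\Rightarrow$(d) and (d)$\Rightarrow$(a), closing the cycle (a)$\Rightarrow$(b)$\Rightarrow$(c)$\Rightarrow$(d)$\Rightarrow$(a).

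I expect the difficulty to be concentrated in two places. The first is the cut-point conjecture of part (1): encoding the cut-point structure as an $\mathbb R$-tree action and then annihilating that action through the theory of group actions on $\mathbb R$-trees is the technical core of \cite{Bow98} and of the work of Swarup and Levitt. The second is the construction of the \emph{simplicial} JSJ tree in part (2) --- verifying that the pretree of local cut points yields an honest simplicial tree with cocompact $G$-action and that the vertex groups are exactly of the three stated types; by comparison, everything downstream of local connectedness is comparatively routine.
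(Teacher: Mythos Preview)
The paper does not prove this theorem at all: it is quoted verbatim as ``the following result of Bowditch \cite{Bow98}'' and used as a black box. So there is nothing to compare your argument against in the paper itself.

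That said, your outline is an accurate summary of how these facts are established in the literature, and the attribution is essentially right: part (1) combines Bestvina--Mess with the resolution of the cut-point conjecture (Bowditch, completed by Swarup using Levitt's analysis of non-nesting actions on $\mathbb R$-trees), and the core of part (2) is Bowditch's construction of the canonical JSJ tree from the local-cut-point structure of $\partial G$. One small note: the paper's single citation \cite{Bow98} is a convenience --- the cut-point work and the JSJ construction are in fact separate papers of Bowditch, and part (1) genuinely requires the contributions of Swarup and Levitt as you indicate. Your logical routing among (a)--(d) at the end is slightly tangled in presentation (the cycle you announce is not quite the one you argue), but the implications you actually establish --- (a)$\Leftrightarrow$(b) from the JSJ tree, (b)$\Rightarrow$(c) by the closure argument, (d)$\Rightarrow$(c) trivially, $\lnot$(a)$\Rightarrow\lnot$(c) from the separating two-point limit set of a two-ended edge group, and $\lnot$(d)$\Rightarrow\lnot$(b) by minimality --- do close the equivalence.
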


The following lemmas and corollaries apply to the setting of Theorem \ref{main result}, since we assume $G$ has enough codimension-1 quasiconvex surface subgroups. However, we chose to phrase them in a more general setting.

\begin{lemma}
\label{lem: non-separation}
Let $G$ be a one-ended hyperbolic group that contains enough codimension-1 quasi\-convex one-ended subgroups. Then $\partial G$ is not separated by any pair of points.
\end{lemma}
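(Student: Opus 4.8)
The plan is to reduce to Bowditch's criterion (Theorem~\ref{Bowditch}), which says that for a one-ended hyperbolic group $G$ with $\bdry G\ne\mathbb S^1$, the boundary $\bdry G$ is not separated by a pair of points if and only if $G$ does not split essentially over a two-ended subgroup. So it suffices to prove two things: first, that $\bdry G\ne\mathbb S^1$; second, that $G$ admits no essential splitting over a two-ended subgroup. The first point should follow from the hypothesis that $G$ contains enough codimension-1 quasiconvex one-ended subgroups: if $\bdry G=\mathbb S^1$ then (by Tukia--Gabai--Casson--Jungreis, or simply because $G$ is then virtually Fuchsian) every quasiconvex subgroup of infinite index has limit set a Cantor set or a pair of points, hence is two-ended, and a two-ended subgroup is not one-ended --- so $G$ would have no enough codimension-1 \emph{one-ended} subgroups, contradicting the hypothesis. (Alternatively, a one-ended subgroup has connected limit set with more than one point, which cannot happen properly inside $\mathbb S^1$ unless it is all of $\mathbb S^1$, forcing finite index.)

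For the second point, suppose toward a contradiction that $G=A*_C B$ or $G=A*_C$ is an essential splitting over a two-ended subgroup $C$. The key geometric input is that this splitting gives a $G$-tree $T$ on which $G$ acts with two-ended edge stabilizers, and that the limit set $\Lambda C\subset\bdry G$ is a pair of points (or a single point, but two-ended quasiconvex subgroups of a torsion-containing group have two-point limit sets; in general $|\Lambda C|=2$). This pair of points separates $\bdry G$ into the pieces corresponding to the two sides of the edge in $T$ --- this is precisely the content of how group splittings are detected in the boundary. Now I bring in the hypothesis that $G$ has enough codimension-1 quasiconvex \emph{one-ended} subgroups: I want to separate the two points of $\Lambda C$ by the limit set $\Lambda H$ of a codimension-1 quasiconvex one-ended subgroup $H$. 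The limit set $\Lambda H$ is then a connected set (since $H$ is one-ended, $\bdry H$ is connected) that separates $\bdry G$. I then derive a contradiction with the tree structure: a connected separating set must be "compatible" with the splitting in a way that forces $H$ to be conjugate into a vertex group or to intersect conjugates of $C$ in a finite-index way, and a careful analysis of how $\Lambda H$ meets the two halves defined by $\Lambda C$ (which are clopen in $\bdry G\setminus\Lambda C$) contradicts connectedness of $\Lambda H$ together with the fact that it genuinely separates the two points of $\Lambda C$.

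Let me restructure the contradiction more cleanly, since the above is the delicate part. If $\Lambda C=\{p,q\}$ separates $\bdry G$, write $\bdry G\setminus\{p,q\}=U\sqcup V$ with $U,V$ open, each a union of limit sets of the two "sides." Choose, using the enough-codimension-1 hypothesis, a one-ended quasiconvex codimension-1 subgroup $H$ whose limit set $\Lambda H$ separates $p$ from $q$. Then $\Lambda H$ is a connected subset of $\bdry G$ meeting both $U$ and $V$ (since any path-like separation of $p,q$ must pass through both sides), so $\Lambda H\cap U$ and $\Lambda H\cap V$ are nonempty, disjoint, relatively open, and cover the connected set $\Lambda H\setminus\{p,q\}$ --- which would be a disconnection unless $\Lambda H$ contains $p$ or $q$. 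But $\Lambda H$ separating $p$ from $q$ means $p,q\notin\Lambda H$. This is the contradiction. Thus no essential two-ended splitting exists, and by Bowditch $\bdry G$ is not separated by any pair of points.

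\textbf{Main obstacle.} The crux is making rigorous the claim that the two-ended splitting produces a two-point set in $\bdry G$ whose complement is disconnected \emph{in a way controlled by the splitting}, i.e.\ that the two sides $U,V$ are genuinely "separated at $\{p,q\}$" so that a connected set avoiding $p,q$ and meeting both sides is impossible. This requires the standard but nontrivial dictionary between splittings over two-ended subgroups and local cut points in the boundary (essentially Bowditch's machinery and the JSJ picture); one must also be careful that "essential" splitting is used --- an inessential splitting over $C$ need not give a separating pair. I expect to invoke Theorem~\ref{Bowditch} as a black box precisely to avoid re-deriving this, so that the only real work left is the short topological argument using connectedness of $\Lambda H$, plus ruling out $\bdry G=\mathbb S^1$.
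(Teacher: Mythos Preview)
Your approach differs substantially from the paper's. The paper gives a direct four-line argument: given any $\xi,\zeta\in\partial G$, choose a hyperplane $\hat{\mathfrak h}$ with $\xi\in\partial^\circ\mathfrak h$ and $\zeta\in\partial^\circ\mathfrak h^*$; set $A=\partial\mathfrak h$, $B=\partial\mathfrak h^*$, so $A\cup B=\partial G$ and $A\cap B=\partial\hat{\mathfrak h}$ is connected (this is where one-endedness of the subgroup enters). Since $\partial G\setminus\{\xi\}$ is connected (Theorem~\ref{Bowditch}\ref{Bowditch_loc_conn}, no global cut points) and is the union of the two closed sets $A\setminus\{\xi\}$ and $B$ with connected intersection $\partial\hat{\mathfrak h}$, both pieces are connected; likewise $B\setminus\{\zeta\}$ is connected; hence $\partial G\setminus\{\xi,\zeta\}=(A\setminus\{\xi\})\cup(B\setminus\{\zeta\})$ is connected. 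No splittings, no part~\ref{Bowditch_loc_cutpt} of Theorem~\ref{Bowditch}.

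Your route can be made to work, but two points deserve comment. First, the detour through splittings is superfluous: you reduce to~(a), assume $\neg$(a), and then immediately invoke Bowditch again to obtain a separating pair, i.e.\ $\neg$(c). You could simply start from ``suppose $\{p,q\}$ separates $\partial G$'' and run the same contradiction; part~\ref{Bowditch_loc_cutpt} never does any work. Second --- and this is the genuine gap --- your parenthetical ``since any path-like separation of $p,q$ must pass through both sides'' does not justify why a connected $\Lambda H\subset U$ cannot separate $p$ from $q$. The missing step: take any component $V_1$ of $V$ (open, since $\partial G$ is locally connected); its closure lies in $V\cup\{p,q\}$, and if $\bar V_1$ omitted, say, $q$, then $V_1$ would be nonempty, proper, and clopen in $\partial G\setminus\{p\}$, contradicting the absence of global cut points. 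Hence $\bar V_1\subset\partial G\setminus\Lambda H$ is connected and contains both $p$ and $q$. With this filled in, your argument is a valid alternative --- using, in the end, exactly the same two ingredients as the paper's (no global cut points, and connectedness of the one-ended subgroup's limit set), just arranged as a contradiction rather than a direct construction.
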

\begin{proof}
Let $\xi,\zeta\in \partial G$ be distinct points. Let $\hyp{h}$ be a hyperplane such that $\xi\in\obhs{h}$ and $\zeta\in\obhs{\comp{h}}$. 
Let $A=\bhs{h}$ and $B=\bhs{\comp{h}}$.
The closed sets $A,B$ satisfy $(A\setminus \{\xi\}) \cup B = \partial G \setminus \{\xi\}$ and $(A\setminus \{\xi\})\cap B = \bhyp{h}$. By \ref{Bowditch_loc_conn} of Theorem \ref{Bowditch} we know that $\partial G \setminus \{\xi\}$ is connected, and thus each of the two halves $A\setminus \{\xi\}$ and $B$ is connected (since we assumed $\bhyp{h}$ is connected). Similarly, $A$ and $B\setminus \{\zeta\}$ are connected. Finally, $\partial G \setminus \{\xi,\zeta\}= (A\setminus \{\xi\}) \cup (B\setminus \{\zeta\})$ is connected as union of connected intersecting sets.
\end{proof}

By \ref{Bowditch_loc_cutpt} of Theorem \ref{Bowditch} we have the following.

\begin{corollary}\label{cor: no local cutpoints}
Let $G$ be a one-ended hyperbolic group that contains enough codimension-1 quasi\-convex one-ended subgroups. Then $\partial G$ does not have any local cutpoints and is not separated by any finite number of points.\qed
\end{corollary}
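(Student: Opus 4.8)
The statement to prove is Corollary~\ref{cor: no local cutpoints}: under the hypotheses of Lemma~\ref{lem: non-separation}, the boundary $\partial G$ has no local cutpoints and is not separated by any finite set of points. The plan is to deduce this directly from Bowditch's Theorem~\ref{Bowditch}, which provides a chain of equivalences. Concretely, I would first observe that the hypotheses of Lemma~\ref{lem: non-separation} are in force, so its conclusion holds: $\partial G$ is not separated by any pair of points. Since $G$ is one-ended and contains a codimension-1 quasiconvex subgroup (so $\partial G$ is infinite and in particular $\partial G \ne \mathbb{S}^1$, as an $\mathbb{S}^1$ boundary would force $G$ to be virtually a surface group, which has no codimension-1 quasiconvex subgroup of infinite index with the required separation property --- or more simply, one notes that the case $\partial G = \mathbb{S}^1$ can be excluded or handled trivially since $\mathbb{S}^1$ is visibly not separated by two points and has no local cutpoints), Theorem~\ref{Bowditch} applies.

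The key step is then to invoke part~\ref{Bowditch_loc_cutpt} of Theorem~\ref{Bowditch}: the four conditions (a)--(d) are equivalent. Lemma~\ref{lem: non-separation} establishes condition (c), that $\partial G$ is not separated by a pair of points. By the equivalence, condition (b) holds --- $\partial G$ has no local cutpoint --- and condition (d) holds --- $\partial G$ is not separated by any finite set of points. These are precisely the two assertions of the corollary, so the proof is complete.

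I would write this as a two-line argument: apply Lemma~\ref{lem: non-separation} to get non-separation by pairs of points, then cite the equivalence in part~\ref{Bowditch_loc_cutpt} of Theorem~\ref{Bowditch} to upgrade this to ``no local cutpoints'' and ``no separation by finite sets.'' There is essentially no obstacle here; the only point requiring a moment's care is confirming that Theorem~\ref{Bowditch} is applicable, i.e. that $\partial G \ne \mathbb{S}^1$. This follows because a one-ended hyperbolic group with $\partial G = \mathbb{S}^1$ is virtually Fuchsian (a cocompact lattice in $\mathrm{PSL}_2(\mathbb{R})$), and such a group does not have enough codimension-1 quasiconvex one-ended subgroups: any codimension-1 quasiconvex subgroup of a surface group is either finite-index (hence not giving proper separation of two points) or two-ended (hence not one-ended), so the hypothesis ``enough codimension-1 quasiconvex one-ended subgroups'' fails. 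Alternatively, if one prefers to keep the statement fully general, the $\mathbb{S}^1$ case is trivially consistent with the conclusion anyway, since the circle has no local cutpoints and is not separated by any finite set of points.
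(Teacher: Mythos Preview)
Your proposal is correct and matches the paper's approach exactly: the paper simply writes ``By \ref{Bowditch_loc_cutpt} of Theorem \ref{Bowditch} we have the following'' and ends the corollary with a \qed, relying on Lemma~\ref{lem: non-separation} to verify condition (c) and then invoking the equivalence. Your additional care in checking that $\partial G \ne \mathbb{S}^1$ (which the paper leaves implicit) is a reasonable point to make explicit, but otherwise the arguments are identical.
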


We continue our discussion with connectivity properties of $\bdry{G}$.

\begin{lemma}\label{lem: bdry is path connected}
Let $G$ be one-ended hyperbolic group that contains enough codimension-1 quasi\-convex subgroups with path connected boundaries, then $\bdry{G}$ is path-connected and locally path-connected.
\end{lemma}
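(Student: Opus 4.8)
The plan is to bootstrap from local connectedness (which we have from Bowditch, Theorem \ref{Bowditch}, since $\bdry G\ne\mathbb S^1$) to path-connectedness, and then to the local version, using the basis of open limit sets of halfspaces from Lemma \ref{basis for topology} together with the hypothesis that hyperplane (and halfspace) limit sets are path-connected. The key structural input is the decomposition $\bdry G=\bhs h\cup\bhs{\comp h}$ with $\bhs h\cap\bhs{\comp h}=\bhyp h$, for any hyperplane $\hyp h$; since $\bhyp h$ is the boundary of a quasiconvex subgroup whose boundary is path-connected by assumption, each piece $\bhs h$, $\bhs{\comp h}$ is path-connected (as a retract-type argument, or directly because a compact, connected, locally connected metric space is path-connected — this is the Hahn--Mazurkiewicz/arcwise-connectedness theorem, which applies to $\bdry G$ itself). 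Actually the cleanest route: $\bdry G$ is compact, connected, metrizable and locally connected (Theorem \ref{Bowditch}\eqref{Bowditch_loc_conn} plus Theorem \ref{KSCT}\eqref{KSCT_elementary}-type properties, all already established), hence by the classical theorem that a Peano continuum is path-connected and locally path-connected, we are essentially done for the global statement. So the real content to write carefully is whichever of these two facts the authors want to invoke, and to check the hypotheses of that classical theorem are all in hand.

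First I would record that $\bdry G$ is a Peano continuum: nondegenerate, compact, connected, metrizable (these hold for boundaries of one-ended hyperbolic groups), and locally connected by Theorem \ref{Bowditch}\eqref{Bowditch_loc_conn} (valid since $\bdry G\ne \mathbb S^1$ — a one-ended hyperbolic group with enough codimension-1 quasiconvex subgroups cannot have $\mathbb S^1$ boundary, as it would then be virtually a surface group and not have such separating subgroups, or simply because the hypotheses of Lemma \ref{lem: non-separation} already force non-separation by pairs of points which $\mathbb S^1$ fails). Then I would invoke the classical theorem (e.g. Hahn--Mazurkiewicz, or the arcwise connectedness of Peano continua, see Willard or Hocking--Young) that a Peano continuum is path-connected and locally path-connected. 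That immediately gives both conclusions of the lemma.

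Alternatively, if one wants a proof that uses the hypothesis on path-connected limit sets of codimension-1 subgroups more directly (which is presumably why it is stated that way), I would argue: given $\xi,\zeta\in\bdry G$, pick a hyperplane $\hyp h$ separating them as in Lemma \ref{lem: non-separation}, so $\xi\in\bhs h$ and $\zeta\in\bhs{\comp h}$; since $\bhyp h=\bhs h\cap \bhs{\comp h}$ is path-connected, it suffices to connect $\xi$ to a point of $\bhyp h$ within $\bhs h$ and likewise $\zeta$ within $\bhs{\comp h}$. But $\bhs h$ is the limit set of the halfspace $\hs h$, which need not be a single quasiconvex subgroup's limit set — so this step actually requires iterating: one finds a nested sequence of halfspaces shrinking toward $\xi$ (as in the proof of Lemma \ref{basis for topology}) and builds the path as a concatenation through the successive bounding hyperplane limit sets, using path-connectedness of each $\bhyp{h_n}$ and $\diam\bhs{h_n}\to 0$ to ensure convergence. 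For local path-connectedness one does the same inside a basic open neighborhood $\obhs h$. The main obstacle I anticipate is controlling the infinite concatenation so that the resulting map from $[0,1]$ is genuinely continuous at the endpoint — this is exactly where $\diam(\bhs{h_n})\to 0$ from the visual metric estimate in Lemma \ref{basis for topology} is used, reparametrizing the $n$-th sub-path onto a subinterval of length $\to 0$. Given the amount of classical machinery available, I would in the end present the short Peano-continuum argument as the primary proof and perhaps remark that the hypothesis ensures the relevant limit sets themselves are Peano continua, hence path-connected, which is what is needed elsewhere.
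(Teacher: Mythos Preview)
Your primary approach via the Peano continuum theorem is correct and considerably shorter than the paper's argument: once $\bdry G$ is compact, metrizable, connected (one-endedness) and locally connected (Bowditch--Swarup; the case $\bdry G=\mathbb S^1$ is trivial anyway), the classical fact that Peano continua are arcwise and locally arcwise connected finishes it. Note that this route does not actually use the hypothesis on path-connected limit sets of the codimension-1 subgroups, so it proves a stronger statement than the lemma asserts.

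The paper instead takes essentially your ``alternative'' route, building paths by hand from the cube-complex structure. It first shows that in any minimal finite cover of $\bdry\CC X$ by closed halfspace limit sets $\{\bhs{h}_1,\ldots,\bhs{h}_n\}$ the union $\bigcup_i\bhyp{h}_i$ is path-connected (minimality forces the $\bhyp{h}_i$ to overlap, and each is path-connected by hypothesis); in particular any two disjoint $\bhyp{h}_1,\bhyp{h}_2$ can be joined by a path lying in $\bdry\CC X\setminus(\obhs{h}_1\cup\obhs{h}_2)$. Then for arbitrary $\xi,\zeta$ it takes descending local bases $\obhs{h}_n\ni\xi$, $\obhs{k}_n\ni\zeta$ and concatenates paths between successive $\bhyp{h}_n$'s (and between $\bhyp{h}_1$ and $\bhyp{k}_1$), with $\diam\bhs{h}_n\to 0$ giving continuity at the endpoints---exactly the telescoping construction you sketched. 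Local path-connectedness is then read off from the decomposition $\bdry\CC X=\bhs{h}\cup\bhs{\comp h}$ with path-connected intersection $\bhyp h$, forcing each $\bhs h$ to be path-connected. What the paper's constructive approach buys is explicit control on where the connecting paths run, which your Peano argument does not provide; on the other hand your argument is a one-liner once the classical theorem is cited.
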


\begin{proof}
Let $\CC{X}$ be the associated CAT(0) cube complex. 
Let $\{\bhs{h}_1,\ldots,\bhs{h}_n\}$ be a cover of $\bdry{\CC{X}}$ which is minimal, \textit{i.e.} which has no proper subcover, with closed limit sets of halfspaces with $n>2$.

Since the boundaries of hyperplanes are connected, the set $\bigcup_{i=1}^n \bhyp{h}_i$ is connected. Otherwise, there are two subsets of the cover, which we assume are $\{\bhs{h}_1,\ldots,\bhs{h}_k\}$ and $\{\bhs{h}_{k+1},\ldots,\bhs{h}_n\}$ such that $\bigcup_{i=1}^k \bhyp{h}_i$ and  $\bigcup_{i=k+1}^n \bhyp{h}_i$ are disjoint. But this would contradict the assumption that $\{\bhs{h}_1,\ldots,\bhs{h}_n\}$ is a minimal cover.
By assumption, each $\bhyp{h}_i$ is path-connected and hence $\bigcup_{i=1}^n \bhyp{h}_i$ is path-connected.

In particular, if $\bhs{h}_1,\bhs{h}_2$ are disjoint closed limit sets of halfspaces, then by completing them to a minimal cover (for example by adding limit sets of halfspaces of much smaller diameter), we conclude that any point in $\bhyp{h}_1$ can be connected to any point in $\bhyp{h}_2$. Moreover, one can do so by a path in $\bdry{\CC{X}}\setminus (\obhs{h}_1\cup\obhs{h}_2)$.

Let $\xi,\zeta$ be two distinct points in $\bdry{\CC{X}}$, and let $\obhs{h}_n$ and $\obhs{k}_n$ be descending local bases for $\xi$ and $\zeta$ respectively. We may assume that $\bhs{h}_1$ and $\bhs{k}_1$ are disjoint. 
By the previous discussion we can connect each $\bhyp{h}_i$ to $\bhyp{h}_{i+1}$ (resp. $\bhyp{k}_i$ to $\bhyp{k}_{i+1}$)  with a path which stays between $\bhyp{h}_i$ and $\bhyp{h}_{i+1}$ (resp. $\bhyp{k}_i$ and $\bhyp{k}_{i+1}$).
We can also connect $\bhyp{h}_1$ and $\bhyp{k}_1$.
By concatenating the above paths, with a suitable parameterization, one can show that there exists a path that connects $\xi$ to $\zeta$. Thus, $\CC{X}$ is path connected.

Finally, since $\CC{X}=\bhs{h}\cup\bhs{\comp{h}}$ and $\bhs{h}\cap\bhs{\comp{h}}=\bhyp{h}$ are both path connected. Each closed limit set of halfspace $\bhs{h}$ is path connected. This implies that the space is locally path-connected.
\end{proof}

The following lemma is a direct corollary of the above, and will be used later on in Lemma \ref{lem: parity of Jordan curve}.

\begin{lemma}[No Blob Lemma]\label{lem: no blob}
Let $X$ be a compact, path connected, locally path connected metric space with no cutpoints.
Then for all $\epsilon>0$ there exists $\delta>0$ such that for all $x\in X$ and for all $A\subset B(x,\delta)$ there is at most one component of $X\setminus A$ which is not contained in $B(x,\epsilon)$.
\end{lemma}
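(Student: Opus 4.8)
The plan is to argue by contradiction: suppose no such $\delta$ works for some fixed $\epsilon>0$. Then there is a sequence $\delta_n \to 0$, points $x_n\in X$, and sets $A_n\subset B(x_n,\delta_n)$ such that $X\setminus A_n$ has at least two components $U_n, V_n$ not contained in $B(x_n,\epsilon)$. Pass to a subsequence so that $x_n\to x\in X$ by compactness, and note that for large $n$ the ball $B(x_n,\delta_n)$ is contained in $B(x,\epsilon/4)$, say. The idea is to extract from $U_n$ and $V_n$ a pair of points outside $B(x,\epsilon/2)$ together with paths inside them leaving $B(x,\epsilon)$, and to use these to produce, in the limit, two points $p,q\in X\setminus\{x\}$ that are separated in $X$ by the single point $x$ — contradicting the no-cutpoint hypothesis. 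The main work is to make the limiting argument produce an honest separation.

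Here is how I would carry it out in more detail. Since $X$ is locally path connected and has no cutpoints, for any point $y\neq x$ the set $X\setminus\{x\}$ is connected and locally path connected, hence path connected; moreover, by the no-cutpoint property applied more carefully, small punctured neighborhoods $B(x,r)\setminus\{x\}$ are "connected enough" — but rather than using $x$ itself, I would instead use the fact that $X$ is a compact connected, locally connected metric space, so it is a Peano continuum, and hence for each $r>0$ there is $r'>0$ such that any two points of $X$ within distance $r'$ lie in a connected subset of diameter $<r$ (uniform local connectedness). Fix $p\in U_n$ and $q\in V_n$ with $d(p,x), d(q,x)\ge \epsilon$ (these exist since $U_n,V_n$ are not contained in $B(x_n,\epsilon)$, after slightly shrinking $\epsilon$). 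After passing to a further subsequence, $p=p_n\to p_\infty$ and $q=q_n\to q_\infty$, both at distance $\ge \epsilon$ from $x$, in particular distinct from $x$.

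The key step is then: show $p_\infty$ and $q_\infty$ cannot be joined by a path in $X\setminus\{x\}$. Suppose $\sigma:[0,1]\to X\setminus\{x\}$ is such a path. Its image is compact and misses $x$, so it lies in $X\setminus B(x,\rho)$ for some $\rho>0$. For $n$ large enough, $A_n\subset B(x_n,\delta_n)\subset B(x,\rho)$, so $\sigma$ avoids $A_n$, i.e. $\sigma$ is a path in $X\setminus A_n$ from $\sigma(0)$ to $\sigma(1)$; since $p_n\to p_\infty=\sigma(0)$ and $q_n\to q_\infty=\sigma(1)$, and using uniform local connectedness to connect $p_n$ to $\sigma(0)$ and $q_n$ to $\sigma(1)$ by short connected sets (of diameter $<\rho/2$, hence still disjoint from $A_n\subset B(x_n,\delta_n)$ for large $n$), we connect $p_n$ to $q_n$ within $X\setminus A_n$. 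This forces $U_n=V_n$, a contradiction. Hence $p_\infty$ and $q_\infty$ lie in distinct path components of $X\setminus\{x\}$. But $X\setminus\{x\}$ is connected (no cutpoints) and locally path connected, so it is path connected — contradiction.

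I expect the main obstacle to be the careful bookkeeping of the two scales ($\delta_n\to 0$ versus the fixed $\epsilon$, with the intermediate radii $\rho$ and $\epsilon/2$) and making sure the uniform local connectedness of the Peano continuum $X$ is invoked correctly to glue the nearby points $p_n$ to the limit $p_\infty$ without accidentally crossing $A_n$; also, one must check that $U_n,V_n$ genuinely contain points far from $x_n$ (not merely fail to be contained in $B(x_n,\epsilon)$ because of a single boundary point), which is handled by shrinking $\epsilon$ to $\epsilon/2$ at the start and noting path components of open-minus-closed sets are themselves not tiny. The no-cutpoint hypothesis enters exactly once, at the very end, to derive the final contradiction.
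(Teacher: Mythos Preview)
Your argument is correct and essentially identical to the paper's: assume the statement fails, extract convergent subsequences $x_n\to x$, $p_n\to p_\infty$, $q_n\to q_\infty$ with $p_\infty,q_\infty\notin\{x\}$, then use that $X\setminus\{x\}$ is path connected (no cutpoints plus local path connectedness) to find a path from $p_\infty$ to $q_\infty$ avoiding a neighborhood of $x$, and glue on short paths to $p_n,q_n$ via local path connectedness to contradict the separation by $A_n$. The paper organizes this more directly (it simply takes the path $\gamma$ from $b$ to $c$ in $X\setminus\{x\}$ and concatenates, rather than phrasing it as a nested contradiction), and your worry about $U_n,V_n$ ``genuinely containing points far from $x_n$'' is a non-issue since not being contained in $B(x_n,\epsilon)$ already furnishes such a point.
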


\begin{proof}
Assume for contradiction that there is $\epsilon>0$ such that for all $n$ there is a point $x_n\in X$ and subset $A_n \subset B(x_n,\frac{1}{n})$ and two distinct components $B_n,C_n$ of $X\setminus A_n$ which have points $b_n,c_n$ respectively outside $B(x,\epsilon)$. By passing to a subsequence if necessary we may assume that the sequences $x_n,b_n,c_n$ converge to $x,b,c$ respectively. Since  $d(b_n,x_n)>\epsilon$ (resp. $d(c_n,x_n)>\epsilon$) we deduce that $d(b,x)\ge\epsilon$ (resp. $d(c,x)\ge\epsilon$).

The space $X$ has no cut point, therefore we can connect $b$ and $c$ with a path $\gamma$ that does not pass through $x$. Since $A_n \to x$, for $n$ big enough $A_n$ is disjoint from $\gamma$. By local path connectivity, for $n$ big enough $b_n$ and $b$ (resp. $c$ and $c_n$) can be connected by a short path $\gamma_b$ (reps. $\gamma_c$) that avoids $A_n$. The concatenated path $\gamma_b*\gamma*\gamma_c$ from $b_n$ to $c_n$ avoids $A_n$ and thus contradicts the assumption that $B_n$ and $C_n$ are distinct components of $X\setminus A_n$.
\end{proof}

\section{Outline of the proof of Jordan's Theorem}
\label{sec: outline}

The aim of this section is to provide a short outline of the proof that any Jordan curve on $\bdry G$ separates. From this section on, let $G$ and $\CC{X}$ be a group and a CAT(0) cube complex satisfying the assumptions made in Subsection \ref{subsec: sum of prelims}.

We first recall Jordan's original proof of his theorem. 
Let $P$ be a polynomial curve. One can associate a parity function to $P$ which, for a point $x\in \R^2\setminus P$ and a generic ray $l$ starting at $x$, counts the number of intersections of $l$ with $P$ mod $2$. 
It is easy to see that this parity function is constant on the connected components of $\R^2\setminus P$. 
Now, for a Jordan curve $J$, one approximates $J$ with polygonal curves $P_n$, and proves that the sequence of the parity functions of the polygonal curves $P_n$ has a limit which is constant on connected components of $\R^2\setminus P$ (and does not depend on the choice of approximating polygonal curves). 
The final crucial step is to show that this parity function indeed obtains two values, which shows that $\R^2\setminus P$ has at least two components.
A more subtle point in the proof of Jordan's Theorem is the proof that each of the two parity regions is connected, but luckily we will not need to prove this since we are only interested in showing that the Jordan curve separates.
The details of Jordan's proof of his theorem can be found in a paper by Hales \cite{Hal07}.

The main idea in our proof is to replace polygonal path approximations with ``piecewise hyperplane paths'' (or ``PH paths''). 
Those are paths in $\bdry\CC{X}$ which are piecewise subsegments of limit sets of hyperplanes (which are homeomorphic to circles).  In Section \ref{sec: PH curves} we give a precise definition and show that every Jordan curve can be approximated by PH curves.

In order to obtain a parity function we first approximate the PH paths by ``grids''.
Those are collection of hyperplanes which are connected along ``connectors''.
Precise definitions and examples are given in Subsections \ref{subsec: connectors} and \ref{subsec: grids}. 
In Section \ref{sec: constructing cocycles} we describe how, given   an (oriented) grid, one can assign a 1-cocycle (over $\Z/2$) which is defined in $\CC{X}$ outside a large enough ball.
Using the vanishing of the first cohomology over $\Z_2$ at infinity, one can find a coboundary for this cocycle, which extends to a map which is defined on the boundary except at the grid. We call it ``the parity function of the grid''. Its construction and properties are discussed in Section \ref{sec: parity of grid}.

In Section \ref{sec: the parity of a Jordan curve}, we show how one obtains a parity function of a PH curve by taking the limit of the parity functions of approximating grids. 
We then define the parity function for a Jordan curve, by taking the limit of the parities of a sequence of approximating PH curves.

The crucial part, as in Jordan's proof, is to show that the limiting parity obtains two values.
This is done in the final section, Section \ref{sec: Jordan thm}, by analyzing how (certain) PH curves intersect a given hyperplane.
This is strongly based on Proposition \ref{prop: two state solution} which analyzes the ways in which two hyperplanes can intersect.

\section{Intersections of hyperplanes}
\label{sec: intersection}

Since our proofs involves studying intersections of hyperplanes which have surface group stabilizers, we begin by recalling the following well-known elementary fact about surface groups. 

\begin{fact}
Finitely generated subgroups of surface groups are one of the following: trivial, cyclic, free, or of finite index.
\end{fact}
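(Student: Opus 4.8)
The statement to prove is the ``Fact'': \emph{Finitely generated subgroups of surface groups are one of the following: trivial, cyclic, free, or of finite index.}

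\textbf{Proof proposal.}

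The plan is to split according to whether the surface group $S = \pi_1(\Sigma)$ is finite, free, or the fundamental group of a closed surface, and in the last (main) case to use the theory of covering spaces together with the classification of surfaces. If $S$ is finite or virtually free, the conclusion is immediate (subgroups of free groups are free by Nielsen--Schreier, and finite groups force the trivial or finite-index alternatives), so I would quickly dispatch these and focus on $S = \pi_1(\Sigma)$ with $\Sigma$ a closed surface. Here I would also reduce to the orientable case by passing to the orientation cover: a subgroup $H \le S$ has index $2$ or an index-$2$ subgroup carrying it inside $\pi_1$ of the orientable double cover, and each of the four conclusions is stable under passing between a group and an index-$2$ over- or subgroup (trivial $\leftrightarrow$ trivial up to finite, cyclic stays virtually cyclic hence cyclic or finite, free stays free, finite index stays finite index). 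So assume $\Sigma$ is a closed orientable surface of genus $g \ge 1$; the torus case $g=1$ is the easy abelian case ($S \cong \Z^2$, whose subgroups are $\{1\}$, $\Z$, or finite-index $\Z^2$), so the real content is $g \ge 2$, where $S$ is a hyperbolic surface group.

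Now let $H \le S$ be a finitely generated, nontrivial subgroup, and let $\Sigma' \to \Sigma$ be the covering space corresponding to $H$, so $\pi_1(\Sigma') = H$. The key dichotomy is whether this cover has finite or infinite degree. If the degree is finite then $\Sigma'$ is a closed surface and $H$ has finite index --- one of the allowed outcomes. If the degree is infinite, then $\Sigma'$ is a noncompact surface. The main step is then to show that a finitely generated infinite-index subgroup is free. For this I would invoke the standard fact (covering space theory plus a core/compact-exhaustion argument) that an open surface with finitely generated fundamental group is homotopy equivalent to a finite graph (equivalently, it has a compact core and the ``ends'' carry no extra $\pi_1$), hence its fundamental group is free. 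Concretely: take a compact connected core $C \subset \Sigma'$ containing generators of $H$ with $\pi_1(C) \twoheadrightarrow \pi_1(\Sigma')$ an isomorphism; $C$ is a compact surface with nonempty boundary (nonempty because $\Sigma'$ is noncompact), and a compact surface with boundary deformation retracts onto a graph, so $H \cong \pi_1(C)$ is free. If one prefers a purely group-theoretic route, one can instead cite that surface groups are LERF (Scott) or coherent, but the covering-space argument is cleanest and self-contained. Finally, distinguish within the free case: $H$ free of rank $0$ is trivial, rank $1$ is cyclic, and rank $\ge 2$ is the ``free'' alternative; and a free subgroup cannot have finite index in $S$ since $S$ (for $g \ge 2$) is not virtually free (it has cohomological dimension $2$, or: it is a one-ended group). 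This exhausts the four possibilities.

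The main obstacle is the infinite-degree case, i.e., proving that a finitely generated infinite-index subgroup of a surface group is free. Everything hinges on the assertion that a noncompact surface with finitely generated fundamental group has a compact core onto which it deformation retracts up to the relevant homotopy, so that its $\pi_1$ is free; this is classical but is the one place where a little care (choosing the core, checking $\pi_1$ surjectivity, checking the boundary is nonempty) is needed. The orientation-reduction bookkeeping and the ``free $\Rightarrow$ not finite index'' step are routine by comparison. Since this is recalled as a well-known elementary fact and is used only as a black box in what follows, I would keep the write-up to a few lines, citing the standard references rather than reproving the compact core statement in detail.
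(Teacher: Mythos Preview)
Your argument is correct and is the standard covering-space proof of this classical fact. Note, however, that the paper does not prove this statement at all: it is introduced as a \emph{Fact} (``well-known elementary fact about surface groups'') and used as a black box, with no argument given. So there is no ``paper's own proof'' to compare against; you have supplied one where the authors chose to cite.

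A couple of minor comments on your write-up. First, the reduction to the orientable case is unnecessary: the covering-space argument works verbatim for nonorientable $\Sigma$, since any compact surface with nonempty boundary (orientable or not) deformation retracts onto a graph. Dropping that reduction also lets you avoid the slightly awkward bookkeeping about how the four alternatives behave under passing to an index-$2$ subgroup. Second, the final paragraph arguing that a free subgroup cannot have finite index is not needed for the statement as phrased: the four alternatives are not asserted to be mutually exclusive (indeed trivial $\subset$ cyclic $\subset$ free), so once you have shown that every finitely generated subgroup lands in one of the four bins you are done.
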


Since our hyperplane stabilizers are surface subgroups we may assume that any two intersecting hyperplanes do so on a cyclic, free, finite index or trivial subgroup. 
This means that the intersection of their limit sets at infinity is either empty, a pair of points, a Cantor set, or the whole circle.

\subsection{Connectors}
\label{subsec: connectors}

Let $\hyp{h},\hyp{k}$ be two intersecting hyperplanes. A \emph{connector} is a pair $(C,\{\hyp{h},\hyp{k}\})$, which by abuse of notation we will simply denote by $C$, where $C$ is a non-empty clopen proper subset $\emptyset\ne C\subsetneq\bhyp{h}\cap\bhyp{k}$. In particular,  $\bhyp{h}\cap\bhyp{k}$ is not the whole circle. We say that the connector $C$ is \emph{supported on the hyperplanes} $\hyp{h},\hyp{k}$.

Two connectors $(C_1,\{\hyp{h}_1,\hyp{k}_1\}),(C_2,\{\hyp{h}_2,\hyp{k}_2\})$ are \emph{disjoint} if either $\{\hyp{h}_1,\hyp{k}_1\}\cap\{\hyp{h}_2,\hyp{k}_2\}=\emptyset$ or $C_1\cap C_2=\emptyset$.

Let $I_1,I_2,\ldots, I_n$ be disjoint intervals in $\bhyp{h}$ whose endpoints are disjoint from $\bhyp{h}\cap\bhyp{k}$ and such that $C = \bhyp{h}\cap\bhyp{k} \cap \bigcup _j I_j $. To each interval $I_j$ we assign the number $\type_{\hyp{h}}(I_j)=1 \in \Z/2$ if the two endpoints of $I_j$ are in different sides of $\hyp{k}$, and $\type_{\hyp{h}}(I_j)=0$ otherwise. 

Next, we extend this definition to the connector $C$ by assigning its type in $\hyp{h}$ to be \[ \type_{\hyp{h}}(C) = \sum _{j=1} ^{n} \type_{\hyp{h}}(I_j) \in \Z/2. \] 
It is easy to verify that this definition depends only on $C,\hyp{h},\hyp{k}$ and does not depend on the choice of the open cover by intervals $I_1,\ldots I_n$ (by passing to a common subdivision of the two open covers).

Finally, we define the type of $C$ by $\type(C,\hyp{h},\hyp{k}) = (\type_{\hyp{h}}(C),\type_{\hyp{k}}(C))$.

We will later show in Proposition \ref{prop: two state solution} that the only possible types, under our assumptions, are $(0,0)$ and $(1,1)$.

\subsection{Grids and oriented grids}
\label{subsec: grids}

Let $\hyp{h}$ be a hyperplane, and let $\hyp{k}_1,\ldots,\hyp{k}_n$ be a collection of (not-necessarily distinct) hyperplanes which intersect $\hyp{h}$. Let $\gridy{C}_{\hyp{h}} = \{C_1,\ldots,C_n\}$ be a set of disjoint connectors such that each $C_i$ is supported on $\hyp{h},\hyp{k}_i$. The choice of $\gridy {C}_{\hyp{h}}$ is \emph{admissible} if $\sum_i \type_{\hyp{h}} (C_i) = 0$.

\begin{remark}\label{rmk: Examples of admissible hyperplanes} The following examples show how this definition behaves for $n=0,1,2$:
\begin{itemize}
\item If $n=0$, then $\gridy {C}_{\hyp{h}}=\emptyset$, and it is admissible. 
\item If $n=1$, $\gridy {C}_{\hyp{h}}=\{C_1\}$ is admissible if and only if $\type_{\hyp{h}}(C_1)=0$.
\item If $n=2$, $\gridy {C}_{\hyp{h}}=\{C_1,C_2\}$ is admissible if and only if $\type_{\hyp{h}}(C_1)=\type_{\hyp{h}}(C_2)$.
\end{itemize}
\end{remark}

We comment at this point that admissible sets of connectors correspond to trivial classes in the first cohomology at infinity of $\hyp{h}$ over $\Z/2$ (which is isomorphic to $\Z/2$), and as such they have a coboundary (in fact, they have exactly two coboundaries). We call its extension to the boundary an  ``orientation'' for $\hyp{h}$. We make this comment explicit, avoiding the use of cohomology at infinity, in the following definition and claim.

\begin{definition}
\label{def: orientation of hyp} 
Let $\hyp{h}$ be a hyperplane and let $\gridy {C}_{\hyp{h}}$ be an admissible collection of disjoint connectors on $\hyp{h}$. 
A function $\alpha_{\bhyp{h}}: \bhyp{h}\setminus \bigcup \gridy {C}_{\hyp{h}} \to \Z/2$ is called an \emph{orientation} if for each $\xi\in\bhyp{h}$ there exists an open neighborhood $V$ of $\xi$ in $\bhyp{h}$ such that either
\begin{itemize}
\item $V$ intersects exactly one connector $C\in \gridy{C}_{\hyp{h}}$ which is supported on $\hyp{h}$ and another hyperplane $\hyp{k}$, and the function $\alpha_{\bhyp{h}}$ on $V\setminus C$ is the characteristic function of $\bhs{k}$ of an orientation $\hs{k}$ of the hyperplane $\hyp{k}$ (which depends on $V$), or
\item $V$ does not intersect any connector and $\alpha_{\bhyp{h}}$ is constant on $V$.
\end{itemize}
\end{definition}

\begin{claim}
\label{claim: orientation of grids} 
Given a hyperplane $\hyp{h}$ and an admissible collection of disjoint connectors $\gridy {C}_{\hyp{h}}$, there are exactly two orientations $\alpha_{\bhyp{h}}: \bhyp{h}\setminus \bigcup \gridy {C}_{\hyp{h}} \to \Z/2$, and they differ by the constant function $1$. 
\end{claim}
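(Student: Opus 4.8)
The claim is essentially that an admissible collection of connectors on a circle $\bhyp{h}\cong S^1$ determines exactly two "consistent" $\Z/2$-labelings of the complementary arcs, and these differ by the constant $1$. The plan is to reduce everything to the finite combinatorial model on the circle and count.

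First I would set up the combinatorial picture. Each connector $C_i\in\gridy{C}_{\hyp{h}}$ is a non-empty clopen proper subset of $\bhyp{h}\cap\bhyp{k}_i$, hence (since these connectors are pairwise disjoint and $\bhyp{h}$ is a circle) one can choose finitely many disjoint open intervals $I^{(i)}_1,\dots,I^{(i)}_{m_i}$ in $\bhyp{h}$ with endpoints off $\bhyp{h}\cap\bhyp{k}_i$ so that $C_i = \bhyp{h}\cap\bhyp{k}_i\cap\bigcup_j I^{(i)}_j$, and moreover arrange all these intervals over all $i$ to be pairwise disjoint. This cuts the circle $\bhyp{h}$ into finitely many closed arcs $A_1,\dots,A_N$ (the "edges"), each of which is either disjoint from all connectors, or meets exactly one interval $I^{(i)}_j$ (hence exactly one connector $C_i$) — shrink the intervals if necessary so that no arc meets two of them. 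An orientation $\alpha_{\bhyp{h}}$ in the sense of Definition~\ref{def: orientation of hyp} is, by the local conditions, constant on the interior of each arc $A_\ell$ away from the connectors; I claim it is in fact determined by a single value in $\Z/2$ on each arc. For an arc disjoint from all connectors this is the second bullet. For an arc $A_\ell$ meeting a connector $C_i$ supported on $\hyp{h},\hyp{k}_i$: the arc has its intersection point(s) with $\bhyp{k}_i$ removed, so $A_\ell\setminus C_i$ has two pieces (one on each side of $\hyp{k}_i$, by definition of $\type$); on this the orientation is the characteristic function of a halfspace $\hs{k}_i$ of $\hyp{k}_i$. Crucially, there are exactly two choices of such a halfspace, $\hs{k}_i$ and $\comp{\hs{k}_i}$, and they give the two pieces of $A_\ell\setminus C_i$ the value-pairs $(0,1)$ or $(1,0)$; the difference of the two values across the connector is exactly $\type_{\hyp{h}}(I^{(i)}_j)=\type_{\hyp{h}}(C_i)$ (if $\type=0$, i.e. the two endpoints are on the same side of $\hyp{k}_i$, then the two pieces get the same value and the arc carries a well-defined single label; if $\type=1$ the two pieces differ, but picking a side of $\hyp{k}_i$ still amounts to picking a single bit). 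So in all cases each arc $A_\ell$ carries one bit $a_\ell\in\Z/2$, and adjacent arcs $A_\ell,A_{\ell+1}$ sharing an endpoint that lies in (the cut corresponding to) a connector $C_i$ must satisfy $a_{\ell+1}=a_\ell+\type_{\hyp{h}}(C_i)$, while arcs sharing an endpoint not on any connector must satisfy $a_{\ell+1}=a_\ell$.

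Now the counting is immediate. Walking once around the circle $\bhyp{h}=A_1\cup\cdots\cup A_N\cup A_1$, fixing the bit $a_1$ on the first arc forces all subsequent bits by the recursion $a_{\ell+1}=a_\ell+\epsilon_\ell$ where $\epsilon_\ell\in\Z/2$ is the type of the connector at the shared endpoint (or $0$ if none). Consistency upon returning to $A_1$ requires $\sum_\ell \epsilon_\ell = 0$ in $\Z/2$, and $\sum_\ell\epsilon_\ell$ is precisely $\sum_i \type_{\hyp{h}}(C_i)$ — counting each connector's contribution $\type_{\hyp{h}}(C_i)=\sum_j\type_{\hyp{h}}(I^{(i)}_j)$ — which vanishes by the admissibility hypothesis. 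Hence each of the two choices $a_1\in\{0,1\}$ extends uniquely and consistently around the circle to give an orientation, and conversely every orientation arises this way; so there are exactly two, and switching $a_1\mapsto a_1+1$ flips every bit, i.e. the two orientations differ by the constant function $1$. Finally I would check that the resulting $\alpha_{\bhyp{h}}$ genuinely satisfies the \emph{local} form of Definition~\ref{def: orientation of hyp} at \emph{every} point $\xi\in\bhyp{h}$ (not just at cut-points): at an interior point of an arc take $V$ small enough to miss all intervals (second bullet); at a cut point of a connector $C_i$, take $V$ inside the corresponding interval $I^{(i)}_j$ so that $V$ meets only $C_i$, and use the halfspace $\hs{k}_i$ determined by the bits on the two adjacent arcs (first bullet).

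**Main obstacle.** The delicate point is not the counting but making the bookkeeping around the definition of an orientation airtight: one must verify that the local data "characteristic function of a halfspace of $\hyp{k}_i$" is genuinely equivalent to "a single bit $a_\ell$ per arc together with the jump $\type_{\hyp{h}}(C_i)$ across the connector," i.e. that the two halfspaces of $\hyp{k}_i$ restrict on the relevant tiny neighborhood $V$ to exactly the two complementary nonconstant functions on $V\setminus C_i$, and that this is consistent with $\type_{\hyp{h}}(C_i)$ as defined via endpoints being on different sides of $\hyp{k}_i$. This is where the connection between the "side of $\hyp{k}$" combinatorics and the halfspace structure has to be pinned down precisely; once that dictionary is in place, existence/uniqueness of the two orientations and the fact that they differ by $1$ is a one-line Euler-characteristic-style walk around the circle.
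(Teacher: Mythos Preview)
Your proposal is correct and takes the same approach as the paper: partition $\bhyp{h}$ into subintervals each meeting at most one connector, fix a value at one basepoint, propagate around the circle with jumps given by the $\type_{\hyp{h}}(I^{(i)}_j)$, and use admissibility ($\sum_i \type_{\hyp{h}}(C_i)=0$) to close up consistently, yielding exactly two solutions differing by the constant $1$. The paper records the bits at the interval endpoints $\xi_0,\dots,\xi_n$ rather than on the arcs, which sidesteps the minor inaccuracy in your write-up (an arc minus a Cantor-like connector need not have only ``two pieces''), but this is cosmetic and the argument is the same.
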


\begin{proof}
Since $\gridy {C}_{\hyp{h}}$ is a finite set of disjoint closed subsets of $\bhyp{h}$, one can find a partition $I_1,\ldots,I_n$ of the circle $\bhyp{h}$ into subintervals whose endpoints $\xi_0,\xi_1,\ldots,\xi_n=\xi_0$ are disjoint from the connectors $\gridy {C}_{\hyp{h}}$, and each interval intersects at most one connector.
Take $\xi_0$, the joint endpoint of $I_1$ and $I_n$, and define $\alpha_{\bhyp{h}}(\xi_0)\in\Z/2$ arbitrarily. We show that this choice defines $\alpha_{\bhyp{h}}$ uniquely. 

For each interval $I_m$, if $I_m$ intersect a connector $C$ then we define $\type_{\hyp{h}} (I_m)$ as above, and if it does not intersect any connector we set $\type_{\hyp{h}} (I_m)=0$.
The assumption that $\gridy {C}_{\hyp{h}}$ is admissible amounts to saying that $\sum_{j=1} ^n \type_{\hyp{h}} (I_j) = 0$. Now one can define the function $\alpha_{\bhyp{h}}$ on the endpoints of the partition by $\alpha_{\bhyp{h}} (\xi_m)=\alpha_{\bhyp{h}}(\xi_0) + \sum _{j=1} ^m \type(I_j)$. 

Let $I_m$ be one of the intervals. If $I_m$ does not intersect any connector then let $\alpha_{\bhyp{h}}$ be the constant function $\alpha_{\bhyp{h}} (\xi_{m-1}) = \alpha_{\bhyp{h}} (\xi_{m})$ on $I_m$.
Otherwise, $I_m$ intersects a connector $C$ which is supported on $\hyp{h}$ and $\hyp{k}$. 
Since $\alpha_{\bhyp{h}} (\xi_m)- \alpha_{\bhyp{h}} (\xi_{m-1}) = \type_{\hyp{h}} (I_m)$ there is an orientation $\hs{k}$ of $\hyp{k}$ such that $\alpha_{\bhyp{h}}$ is equal to $\charf{\bhs{k}}$ on $\xi_{m-1},\xi_m$. Thus we can define $\alpha_{\bhyp{h}} = \charf{\bhs{k}}$ on $I_m$.

Clearly this construction satisfies the required condition. Moreover, since the construction of this function is invariant to subdivisions the partition $I_1,\ldots,I_m$, we conclude that it is the unique function that satisfies the required conditions.
\end{proof}

\begin{definition}[Grids and oriented grids] A \emph{grid} $\grid{G}$ is a pair $(\gridy{H},\gridy{C})$ of a finite collection of hyperplanes $\gridy{H}$ and a finite collection of disjoint connectors $\gridy{C}$ which are supported on hyperplanes in $\gridy{H}$, satisfying that for each $\hyp{h}\in \gridy{H}$ the set $\gridy {C}_{\hyp{h}}$ of the connectors in $\gridy {C}$ which are supported on $\hyp{h}$ is admissible. 

An \emph{oriented grid} $\grid{G}=(\gridy{H},\gridy{C},\{\alpha_{\bhyp{h}} | \hyp{h}\in \gridy{H}\})$ is a grid $(\gridy{H},\gridy{C})$ together with a choice of a orientations $\alpha_{\bhyp{h}}$ for each $\hyp{h}\in \gridy{H}$.
\end{definition}

Before we proceed to the construction of the cocycle we give the three main examples of grids that we will use. See Figure \ref{fig: grids} and compare with Remark \ref{rmk: Examples of admissible hyperplanes}.

\begin{figure}
\begin{center}
\def\svgwidth{\textwidth}
\input{./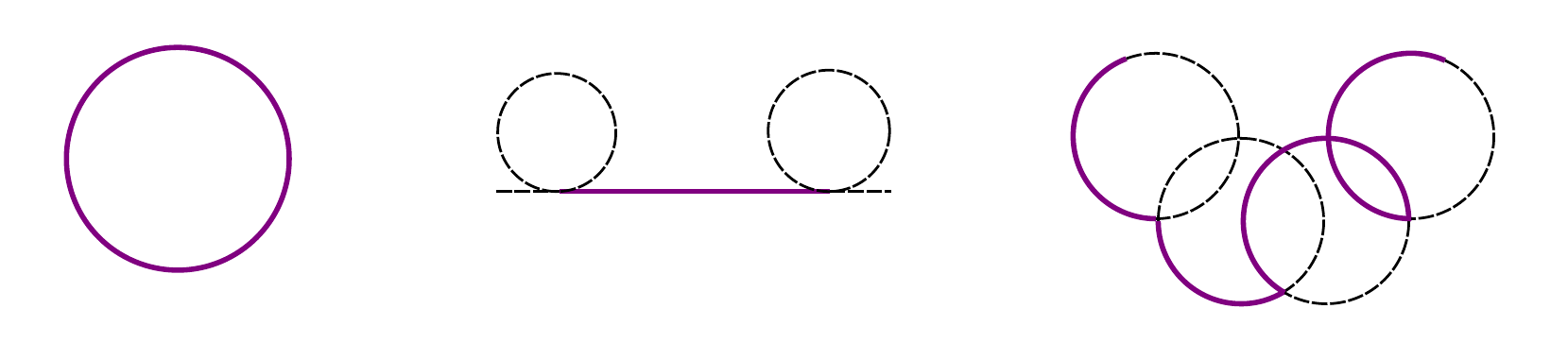_tex}
\caption{Examples of grids, from left to right, Example \ref{eg: a hyperplane grid}, Example \ref{eg: an arc grid} and Example \ref{eg: a cycle grid}. The hyperplanes are shown in dotted lines. The connectors and an arbitrary choice of orientation are shown in purple.}
\label{fig: grids}
\end{center}
\end{figure}

\begin{example} [a hyperplane grid]\label{eg: a hyperplane grid}
The grid consisting of one hyperplane $\gridy{H}=\{\hyp{h}\}$ and no connectors. 
\end{example}

\begin{example} [an arc grid]\label{eg: an arc grid}
The grid consisting of three hyperplanes $\gridy{H}=\{\hyp{h},\hyp{k}_1,\hyp{k}_2\}$ and two disjoint connectors $C_1,C_2$ of $\hyp{h}$ with $\hyp{k}_1,\hyp{k}_2$ respectively, such that $\type(C_i,\hyp{h},\hyp{k}_i) = (1,0)$.
\end{example}

\begin{example} [a cycle grid]\label{eg: a cycle grid}
The grid $\gridy{H}$ consisting of cyclically intersecting hyperplanes $\{\hyp{h}_1,\ldots,\hyp{h}_n\}$ and disjoint connectors $\gridy{C} = \{C_1, \ldots ,C_n\}$, such that each $C_i$ is supported on $\hyp{h}_i$ and $\hyp{h}_{i+1}$ (mod $n$), and satisfies $\type(C_i,\hyp{h}_i,\hyp{h}_{i+1})=(1,1)$.
\end{example}

We end this section with the following remark.

\begin{remark}\label{rmk: can partition connectors}
If we replace a connector of a grid by its partition into disjoint clopen sets we obtain a new grid. An orientation for the original grid will remain an orientation for the new. In what follows, this operation will not make any substantial difference, thus we may identify two grids if they have the same hyperplanes and a common partitioning of their connectors.
\end{remark}

\section{Constructing cocycles}
\label{sec: constructing cocycles}

To an oriented grid $\grid{G} = (\gridy{H},\gridy{C},\{\alpha_{\bhyp{h}} | \hyp{h}\in \gridy{H}\})$ we assign a parity function in several steps.

\begin{enumerate}[label=Step \arabic*. , , itemindent = 1.7cm , leftmargin=0cm ]
\item Fix $\CCv{x}_0\in\CC{X}$. 
Let $C\in\gridy{C}$ be a connector supported on $\hyp{h},\hyp{k}$. Since $\hyp{h}\cap\hyp{k}$ is quasi-isometric to a tree, there exists a big enough ball $B(\CCv{x}_0,R)$ in $\CC{X}$ such that $C$ can be written as a finite union of limit sets of the components of $\hyp{h}\cap\hyp{k}\setminus B(\CCv{x}_0,R)$.
Let $R_0>0$ be big enough such that the above is true for all $C\in\gridy{C}$. 

By Remark \ref{rmk: can partition connectors}, we may assume that the connectors are exactly the limit sets of these components.
We denote the component of $\hyp{h}\cap\hyp{k}\setminus B(\CCv{x}_0,R_0)$ that corresponds to $C$ by $\check{C}$.

\item \label{step2}For each $\hyp{h}\in\gridy{H}$ let $\Hs{T}_{\hyp{h}}$ be the set of all halfspaces $\hs{t}\in\Hyp{H}(\hyp{h})$ in the cube complex $\hyp{h}$ (i.e, $\hs{t} = \hyp{h} \cap \hs{h}'$ for some halfspace $\hs{h}'$ in $\CC{X}$) that satisfy:
\begin{itemize}
\item $\hs{t}\cap B(\CCv{x}_0,R_0) = \emptyset$
\item $\bhs{t}$ is contained in a neighborhood $V$ as in Definition \ref{def: orientation of hyp}, and in particular it intersects at most one connector $C$,
\item the halfspace $\hs{t}$ does not intersect any of the sets $\check{C}$ of the connectors in $\gridy{C}_{\hyp{h}}$ except if $\bhs{t}$ intersects a connector $C$ supported on $\hyp{h}$ and $\hyp{k}$ in which case $\hs{t} \cap \hyp{k} = \hs{t} \cap \check{C}$, and in particular  $\hs{t}\cap\check{C}$ is a hyperplane in the cube complex $\hs{t}$. 
\end{itemize}

For every $\xi\in\bhyp{h}$ there exists $\hs{t}\in\Hs{T}_{\hyp{h}}$ such that $\xi\in\obhs{t}$.

Let $R_1>R_0$ be such that $B(\CCv{x}_0,R_1)$ contains $\hyp{h}\setminus \bigcup _{\hs{t}\in\Hs{T}_{\hyp{h}}} \hs{t}$ for all $\hyp{h}\in\gridy{H}$.

\item We extend the function $\alpha_{\bhyp{h}}:\bhyp{h}\setminus \bigcup \gridy {C}_{\hyp{h}} \to \Z/2$ to a function $\alpha_{\hyp{h}}:\hyp{h}^{(0)}\setminus B(\CCv{x}_0,R_1) \to \Z/2$ in the following way: On each halfspace $\hs{t}\in\Hs{T}_{\hyp{h}}$ either $\bhs{t}$ does not intersect any connector, in which case $\alpha_{\bhyp{h}}|_{\bhs{t}}$ is constant, and we define $\alpha_{\hyp{h}}|_{\hs{t}}$ to be the constant function with the same value. Otherwise, $\bhs{t}$ intersects exactly one connector $C$ which is supported on $\hyp{h}$ and $\hyp{k}$, and there exists a halfspace $\hs{k}$ bounded by $\hyp{k}$ such that $\alpha_{\bhyp{h}}|_{\bhs{t}} = \charf{\bhs{k}}|_{\bhs{t}}$. We define $\alpha_{\hyp{h}}|_{\hs{t}} = \charf{\hs{k}}|_{\hs{t}}$. 

Since vertices in a hyperplane correspond to edges in the cube complex, we identify the function $\alpha_{\hyp{h}}$ with the corresponding function on the edges of $\CC{X}\setminus B(\CCv{x}_0,R_1)$ which are transverse to $\hyp{h}$.
\end{enumerate}

\begin{claim}\label{claim: cocycle}
The sum $\alpha = \sum_{\hyp{h}\in \gridy{H}} \alpha_{\hyp{h}}$ is a 1-cocycle on $\CC{X}\setminus B(\CCv{x}_0,R_1)$. 
\end{claim}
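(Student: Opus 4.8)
The plan is to verify the cocycle condition locally, at each square of $\CC{X}\setminus B(\CCv{x}_0,R_1)$, since a $\Z/2$-valued function $\alpha$ on edges is a cocycle exactly when for every square $\CCc{s}$ the sum of the values of $\alpha$ on the four edges of $\CCc{s}$ is $0$. The edges of a square come in two parallel pairs, each pair dual to a hyperplane passing through $\CCc{s}$; so at most two hyperplanes $\hyp{h},\hyp{k}$ contribute, and $\alpha$ restricted to the four edges is $\alpha_{\hyp{h}}$ (equal on the two $\hyp{h}$-dual edges, contributing $0$ to the sum) plus $\alpha_{\hyp{k}}$ (likewise $0$) unless one of these two hyperplanes is itself a hyperplane of $\gridy{H}$ whose orientation function was ``twisted'' across the square by a connector. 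The point is to unwind the construction in Step 3 and check that every square sees an even number of sign changes.

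First I would set up the local picture: fix a square $\CCc{s}$ disjoint from $B(\CCv{x}_0,R_1)$, let $\hyp{h},\hyp{k}$ be the (at most two) hyperplanes dual to its edges, and split into cases according to how many of $\hyp{h},\hyp{k}$ lie in $\gridy{H}$. If neither lies in $\gridy{H}$, then $\alpha$ is $0$ on all four edges and there is nothing to prove. If exactly one, say $\hyp{h}\in\gridy{H}$, then the two edges of $\CCc{s}$ dual to $\hyp{h}$ correspond to two vertices of the cube complex $\hyp{h}$ joined by an edge of $\hyp{h}$ dual to $\hyp{h}\cap\hyp{k}$; here $\alpha_{\hyp{h}}$ is constant along that edge of $\hyp{h}$ unless $\hyp{h}\cap\hyp{k}$ meets some $\check{C}$, i.e. unless $\hyp{k}$ carries a connector $C\in\gridy{C}_{\hyp{h}}$. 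In the connector case, Step 2 guarantees (third bullet) that $\hs{t}\cap\hyp{k}=\hs{t}\cap\check{C}$, so on the halfspace $\hs{t}$ the twisting of $\alpha_{\hyp{h}}$ is exactly $\charf{\hs{k}}|_{\hs{t}}$ for a fixed halfspace $\hs{k}$ of $\hyp{k}$; then the two $\hyp{h}$-dual edges of $\CCc{s}$ get values differing precisely by whether the edge of $\hyp{h}$ crosses $\hyp{k}$, which is $1$; and $\alpha_{\hyp{k}}=0$ since $\hyp{k}\notin\gridy{H}$ — wait, this gives sum $1$, so I must be more careful: in fact the square is crossed by $\hyp{h}$ and $\hyp{k}$ simultaneously only when $\hyp{h}\cap\hyp{k}$ meets $\CCc{s}$, and then the two $\hyp{k}$-dual edges of $\CCc{s}$ lie on opposite sides of $\hyp{h}$, one inside $\check C$ and one not, which forces $\hyp k\in\gridy H$ is \emph{not} needed because the contribution is absorbed — the honest bookkeeping is that the four edges carry values $a, a, b, c$ with $a$ on the two $\hyp h$-edges only if $\hyp h\cap\hyp k$ does not separate them, and one must track the definition of $\type_{\hyp h}$ of the relevant interval. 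I would organize this as: the value of $\alpha_{\hyp h}$ jumps across the edge dual to $\hyp k$ by $\type$ of a small interval, and this jump equals the value of $\charf{\hs k}$ flipping, which is consistent precisely by the choice in Step 3.

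If both $\hyp{h},\hyp{k}\in\gridy{H}$, the analysis is symmetric and one sums the two contributions; the key identity is Proposition \ref{prop: two state solution}'s forthcoming constraint that a connector supported on $\hyp h,\hyp k$ has type $(0,0)$ or $(1,1)$, which ensures the twist contributed to $\alpha_{\hyp h}$ by crossing $\hyp k$ and the twist contributed to $\alpha_{\hyp k}$ by crossing $\hyp h$ agree mod $2$, so they cancel in the four-edge sum. (Note $\alpha=\sum\alpha_{\hyp h}$ is a \emph{finite} sum of functions each supported near a single hyperplane, and on a given square only finitely many — at most two — are nonzero, so no convergence issue arises; and $R_1$ was chosen so that each $\alpha_{\hyp h}$ is defined on all of $\hyp h^{(0)}\setminus B(\CCv{x}_0,R_1)$, hence $\alpha$ is defined on every edge outside $B(\CCv{x}_0,R_1)$.)

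The main obstacle I expect is the bookkeeping in the ``one hyperplane in $\gridy H$, with a connector'' case: one has to match the combinatorial definition of $\type_{\hyp h}(I)$ (endpoints of $I\subset\bhyp h$ on different sides of $\hyp k$) with the geometric statement that, on a square dual to $\hyp h$ and $\hyp k$, moving along the $\hyp h$-edge crosses $\hyp k$ exactly once, and then confirm that the extension rule $\alpha_{\hyp h}|_{\hs t}=\charf{\hs k}|_{\hs t}$ on the halfspace-level is consistent square-by-square (not merely boundary-consistent). This is where the third bullet of Step 2 — that $\hs t\cap\check C$ is a \emph{hyperplane} in the cube complex $\hs t$, i.e. $\check C$ cuts $\hs t$ cleanly — does the real work, since it guarantees $\charf{\hs k}$ is itself a cocycle on the edges of $\hyp h$ inside $\hs t$, so adding it to a constant keeps $\alpha_{\hyp h}$ a valid cochain whose coboundary is supported exactly along $\check C$, which cancels against the $\hyp k$-term (when $\hyp k\in\gridy H$) or is harmless (when $\hyp k\notin\gridy H$, because then the relevant squares are those where $\hyp k$ is the connector-hyperplane and the edge dual to $\hyp k$ is interior, contributing no $\alpha$-value). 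Everything else is a routine check.
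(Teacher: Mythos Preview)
Your proposal has a genuine gap: you invoke Proposition~\ref{prop: two state solution} (that connectors have type $(0,0)$ or $(1,1)$) to make the two ``twists'' cancel in the both-hyperplanes case. But that proposition is proved \emph{after} this claim, and its proof constructs an arc grid and uses its parity function, which in turn rests on the cocycle $\alpha$ produced here. So the appeal is circular in the paper's logical order.

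More importantly, the type constraint is not needed at all, and once you drop it your confused ``one hyperplane in $\gridy{H}$, with a connector'' case dissolves. The paper argues as follows. Fix a square $\CCc{S}$ with transverse hyperplanes $\hyp{h},\hyp{k}$, and suppose the two $\hyp{h}$-edges $\CCc{e}_{\hyp{h}},\CCc{e}'_{\hyp{h}}$ receive different $\alpha$-values. Viewed as adjacent vertices of the cube complex $\hyp{h}$, they lie in some $\hs{t}\in\Hs{T}_{\hyp{h}}$; by Step~3, $\alpha_{\hyp{h}}|_{\hs t}$ is either constant or equals $\charf{\hs{k}'}|_{\hs t}$ for the halfspace $\hs{k}'$ of the hyperplane supporting a connector $C\in\gridy{C}_{\hyp h}$. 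A jump forces the second alternative, and by the third bullet of Step~2 the hyperplane of $\hs t$ separating the two vertices is $\hs t\cap\check C=\hs t\cap\hyp{k}'$. But the edge of $\hyp h$ joining those vertices is dual to $\hyp h\cap\hyp k$, so $\hyp{k}'=\hyp{k}$. Hence $C$ is supported on $\hyp h,\hyp k$, and in particular $\hyp{k}\in\gridy{H}$ automatically. Now run the same reasoning on the $\hyp{k}$-side: the two $\hyp{k}$-edges of $\CCc{S}$, viewed as adjacent vertices of $\hyp k$, are separated by the \emph{same} $\check C\subset\hyp h\cap\hyp k$, so $\alpha_{\hyp k}$ (which near $\check C$ equals $\charf{\hs h}$ for some side of $\hyp h$) also jumps. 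Thus both pairs of opposite edges change together, and the four-edge sum is $0$.

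So the case you found troubling, ``$\hyp h\in\gridy H$, $\hyp k\notin\gridy H$, with a jump of $\alpha_{\hyp h}$ across $\hyp k$,'' simply cannot occur: a jump is caused by a connector, and connectors are supported on pairs of hyperplanes \emph{both} in $\gridy H$. Your instinct that ``this gives sum $1$'' was correct for that hypothetical case; the resolution is that the hypothesis is vacuous. No boundary-type information enters the argument---everything happens in the cube complex via $\check C$.
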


\begin{proof}
Let $\CCc{S}$ be a square in $\CC{X}\setminus B(\CCv{x}_0,R_1)$, and let $\hyp{h}$ and $\hyp{k}$ be the two hyperplanes which are transverse to the edges of $\CCc{S}$.
Let $\CCc{e}_{\hyp{h}},\CCc{e}_{\hyp{h}}'$ (resp. $\CCc{e}_{\hyp{k}},\CCc{e}_{\hyp{k}}'$) be the pair of opposite edges of $\CCc{S}$ which are transverse to $\hyp{h}$ (resp. $\hyp{k}$) (see Figure \ref{fig: cocycle}).
We have to show that \[ \alpha(\CCc{e}_{\hyp{h}})+\alpha(\CCc{e}_{\hyp{h}}')+\alpha(\CCc{e}_{\hyp{k}})+\alpha(\CCc{e}_{\hyp{k}}' )=0 \]. In other words, we have to show that if one pair of opposite edges in $\CCc{S}$ has different values of $\alpha$ then so does the other pair. 

\begin{figure}
\begin{center}
\def\svgwidth{0.3\textwidth}
\input{./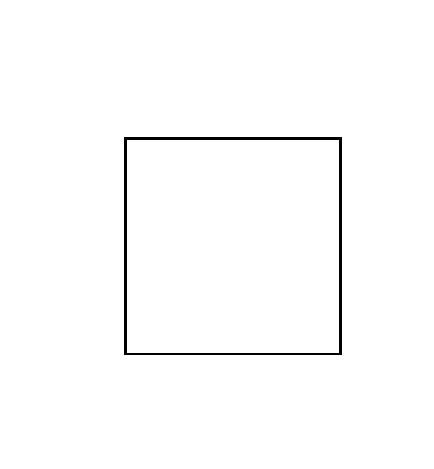_tex}
\caption{A square $\CCc{S}$, and the hyperplanes that cross it $\hyp{h},\hyp{k}$. The edges on which the 1-cocycle $\alpha$ obtains the value $1$ are shown in red. If on one pair of opposite edges $\alpha$ obtains different values then so it does on the other pair.}
\label{fig: cocycle}
\end{center}
\end{figure}

Assume that the two opposite edges of $\CCc{S}$ which are transverse to $\hyp{h}$ have different values of $\alpha$, i.e, $\alpha(\CCc{e}_{\hyp{h}})+\alpha(\CCc{e}_{\hyp{h}}')=1$. 
Then, if we view $\CCc{e}_{\hyp{h}},\CCc{e}'_{\hyp{h}}$ as vertices of $\hyp{h}$, there exists a halfspace $\hs{t}\in\Hs{T}_{\hyp{h}}$ of $\hyp{h}$ such that the two vertices are in $\hs{t}$, and there exists a connector $C$ such that these two vertices are separated by the hyperplane $\check{C}\cap\hs{t}$ of $\hs{t}$. 
Thus, the other pair of opposite edges $\CCc{e}_{\hyp{k}},\CCc{e}_{\hyp{k}}'$ of $\CCc{S}$, viewed as an edge in $\hyp{h}$ are transverse to the hyperplane $\check{C}\cap\hs{t}$ of $\hs{t}$.
This implies that $\hyp{k}\in\gridy{H}$ and the connector $C$ which correspond to $\check{C}$ is supported on $\hyp{h},\hyp{k}$. 

Since $\check{C}$ also separates $\CCc{e}_{\hyp{k}},\CCc{e}_{\hyp{k}}'$, now viewed as vertices in $\hyp{k}$, they must also have different values of $\alpha$.
%
\end{proof}

We remark that even though the definition of $\alpha$ depends on $R_0$ and $R_1$, any two such cocycles are equal outside a large enough ball $B(\CCv{x}_0,R)$.

\section{The parity function of a grid}
\label{sec: parity of grid}

Let $\grid{G}=(\gridy{H},\gridy{C},\{\alpha_{\bhyp{h}}|\hyp{h}\in \gridy{H}\})$ be an oriented grid, and let $\alpha$ be the 1-cocycle on $\CC{X}\setminus B(\CCv{x}_0,R_1)$ defined in the previous section. 
Since $\CC{X}$ has trivial first cohomology over $\Z/2$ at infinity we deduce that there is $R>R_1$ such that $\alpha$ is a coboundary in $\CC{X}\setminus B(\CCv{x}_0,R)$. That is, there exists $\pi:\CC{X}^{(0)}\setminus B(\CCv{x}_0,R) \to\Z/2$ such that $\delta\pi=\alpha$. Since $\CC{X}$ is connected at infinity, any two such coboundaries $\pi$ are equal outside a bigger ball and up to adding the constant function $1$. We call the function $\pi$ the parity function of the grid $\grid{G}$.

Let us denote by $\bdry\grid{G}$ the set $\{\xi |\exists\hyp{h}\in \gridy{H}, \alpha_{\bhyp{h}}(\xi)=1\} \cup \bigcup _{C\in\gridy{C}} C \subset \bdry\CC{X}$. For example, in Figure \ref{fig: grids}, the set $\bdry\grid{G}$ is shown in plain purple.

\begin{lemma}\label{lem: parity extends to boundary}
The coboundary $\pi$ extends to a function, which we denote as well by $\pi$, which is defined on $\bdry\CC{X}\setminus \bdry\grid{G}$, and is constant on connected components of $\bdry\CC{X}\setminus\bdry\grid{G}$
\end{lemma}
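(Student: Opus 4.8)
The plan is to show that $\pi$ extends continuously (in the sense of a locally constant function) to $\bdry\CC{X}\setminus\bdry\grid{G}$ by a limiting argument along geodesic rays, using the local basis of open limit sets of halfspaces from Lemma \ref{basis for topology} together with the description of $\alpha$ as a sum of the edge functions $\alpha_{\hyp{h}}$.

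First I would fix $\xi\in\bdry\CC{X}\setminus\bdry\grid{G}$ and a geodesic ray $\gamma$ from $\CCv{x}_0$ to $\xi$. The value $\pi(\xi)$ should be $\lim_{t\to\infty}\pi(\gamma(t))$, so the first step is to show this limit exists, i.e.\ that $\pi$ is eventually constant along $\gamma$. For this I would observe that $\pi(\gamma(t))$ changes between consecutive vertices exactly when $\alpha$ evaluates to $1$ on the traversed edge, which happens exactly when that edge is transverse to some $\hyp{h}\in\gridy{H}$ with $\alpha_{\hyp{h}}=1$ on it. By the construction in Step 2--3 of Section \ref{sec: constructing cocycles}, for $t$ large the edge at $\gamma(t)$ transverse to $\hyp{h}$ lies in some halfspace $\hs{t}\in\Hs{T}_{\hyp{h}}$ on which $\alpha_{\hyp{h}}$ is either constant (no connector) or equals $\charf{\hs{k}}|_{\hs{t}}$ for a fixed halfspace $\hs{k}$; since $\xi\notin\bdry\grid{G}$, $\gamma$ eventually stays on the side of $\hyp{k}$ (resp.\ on one side of $\hyp{h}$ itself) where the value is locked, so only finitely many crossings contribute. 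A geodesic crosses each hyperplane at most once, and only finitely many hyperplanes in $\gridy{H}$ can be crossed by $\gamma$ outside $B(\CCv{x}_0,R)$ in a way that changes $\alpha$, so $\pi\circ\gamma$ stabilizes.

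Next I would show the limit is independent of the choice of ray and that the resulting function is locally constant. Given $\xi\notin\bdry\grid{G}$, since $\bdry\grid{G}$ is closed (it is a finite union of circles, connectors, and closed supports of the $\alpha_{\bhyp{h}}=1$ sets), there is an open limit set $\obhs{h}\ni\xi$ with $\bhs{h}\cap\bdry\grid{G}$ controlled: shrinking along the descending chain of Lemma \ref{basis for topology} one may take $\obhs{h}$ small enough that $\hs{h}$ meets at most the finitely many relevant hyperplanes of $\gridy{H}$ in the locked configuration described above, hence $\alpha\equiv 0$ on all edges of $\hs{h}$ outside $B(\CCv{x}_0,R)$ except those accounted for by a single connector pattern. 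Then $\pi$ is constant on $\hs{h}^{(0)}\setminus B(\CCv{x}_0,R)$ (any two vertices there are joined by a path in $\hs{h}$ avoiding the large ball, along which $\delta\pi=\alpha=0$), so every ray into $\obhs{h}$ gives the same limit and $\pi$ is constant on $\obhs{h}$. This simultaneously proves well-definedness, local constancy, and constancy on components of $\bdry\CC{X}\setminus\bdry\grid{G}$ (a locally constant function on a space is constant on connected components).

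The main obstacle I expect is the bookkeeping in showing that, near a boundary point $\xi\notin\bdry\grid{G}$, the cocycle $\alpha$ really does vanish on all but a "topologically trivial" set of edges of a small halfspace $\hs{h}$ — one must carefully use the three bullets of Step \ref{step2} (that halfspaces $\hs{t}\in\Hs{T}_{\hyp{h}}$ meet at most one connector, don't meet the other $\check{C}$'s, and meet $\hyp{k}$ only along $\check{C}$) to rule out spurious contributions from connectors whose limit sets do not contain $\xi$, and to ensure that the "$\alpha_{\hyp{h}}=\charf{\hs{k}}$" pieces for different $\hyp{h}$ don't conspire to make $\pi$ non-constant on an arbitrarily small halfspace. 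Once one knows $\xi$ is positively separated (in a visual metric) from the finitely many connectors and from the circles $\bhyp{h}$ that are not "locked," the geodesic and halfspace arguments go through cleanly.
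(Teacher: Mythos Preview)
Your approach is essentially the paper's: find a halfspace neighborhood of $\xi$ on which $\alpha$ vanishes, hence $\pi$ is constant. The paper does this in two lines: since $\bdry\grid{G}$ is closed and $\xi\notin\bdry\grid{G}$, pick a halfspace $\hs{t}$ with $\bhs{t}\cap\bdry\grid{G}=\emptyset$ and $\hs{t}\cap B(\CCv{x}_0,R)=\emptyset$; then $\alpha$ vanishes on all edges of $\hs{t}$ and $\pi$ is constant there.

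Two remarks on your write-up. First, the geodesic-ray limiting argument in your first paragraph is redundant once you have the halfspace argument; the latter already gives both well-definedness and local constancy. Second, your hedge ``except those accounted for by a single connector pattern'' is unnecessary and creates an internal inconsistency with the next sentence (``along which $\delta\pi=\alpha=0$''): since $\xi\notin\bdry\grid{G}$ already means $\xi$ lies in no connector and in no $\alpha_{\bhyp{h}}^{-1}(1)$, a sufficiently small halfspace has closed limit set disjoint from all connectors and from each $\overline{\alpha_{\bhyp{h}}^{-1}(1)}$, so after possibly shrinking further (finitely many $\hyp{h}$'s) one gets $\alpha\equiv 0$ on the halfspace with no exceptions. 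With that simplification your ``main obstacle'' disappears.
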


\begin{proof}
It is enough to prove that any point in $\bdry\CC{X}\setminus \bdry\grid{G}$ has a neighborhood $V$ in $\CC{X}\cup\bdry\CC{X}$ on which $\pi$ is constant.

For a point $\xi \in \bdry\CC{X}\setminus\bdry\grid{G}$ there is a neighborhood halfspace $\hs{t}$ disjoint from $\bdry\grid{G}$ and from $B(\CCv{x}_0,R)$. This halfspace, by definition, does not meet the cocycle $\alpha$ and thus the parity function on $\hs{t}$ is constant.
\end{proof}

Since the parity function is defined uniquely up to constants, it makes more sense to consider the function $\Delta\pi:(\bdry\CC{X}\setminus\bdry\grid{G})^2 \to \Z/2$ (similarly, $\Delta\pi:(\CC{X}^{(0)}\setminus B(\CCv{x}_0,R))^2 \to \Z/2$) given by $\Delta\pi(x,y)=\pi(x)-\pi(y)$. Moreover, for a path $P$ whose endpoints are in the domain of $\pi$ we define $\Delta\pi(P)$ to be the value of $\Delta\pi$ on the pair of endpoints of $P$.

\begin{example}\label{eg: parity of hyperplane grid}
Let $\hyp{h}$ be a hyperplane, and let $\grid{G}=(\gridy{H}=\{\hyp{h}\},\gridy{C}=\emptyset,\alpha_{\bhyp{h}} =1)$ be the hyperplane grid described in Example \ref{eg: a hyperplane grid}. 
Then, $\alpha = \charf{\hyp{h}}$, $\bdry\grid{G}=\bhyp{h}$, and the parity function $\pi = \charf{\hs{h}}$ for some choice of halfspace $\hs{h}$ of $\hyp{h}$, 
which extends to the boundary to the function $\pi = \charf{\bhs{h}}|_{\bdry\CC{X}\setminus\bhyp{h}}$. We denote this parity function by $\pi_{\hyp{h}}$.
Similarly, the function $\Delta\pi_{\hyp{h}}$ is the function that returns $0$ if the two points are on the same side of $\hyp{h}$, and returns $1$ if they are separated by $\hyp{h}$.
\end{example}

\begin{remark} \label{rmk: type and parity}
Using this notation, we can rewrite the notation introduced in \ref{subsec: connectors} as follows. If $C$ is a connector supported on $\hyp{h},\hyp{k}$ and $I_j$ is an interval on $\hyp{h}$ as in the definition of $\type(C)$. Then, \[ \type_{\hyp{h}}(I_j) = \Delta\pi_{\hyp{k}} (I_j) \]
\end{remark}

The following lemma follows from the definitions. 

\begin{lemma}\label{addition of grids} 
Let $\grid{G}_1,\grid{G}_2$ be two oriented grids with disjoint or identical connectors. Then we denote by $\grid{G}_1+\grid{G}_2$ the pair $\grid{G}=(\gridy{H}_1\cup \gridy{H}_2, \gridy{C}_1 \triangle \gridy{C}_2)$. The pair $\grid{G}$ is a grid, and there exists an orientation on $\grid{G}$ which satisfies $\alpha_{\bhyp{h}}=\alpha_{1,\bhyp{h}} + \alpha_{2,\bhyp{h}}$ when all of the functions are defined. 
Moreover, the cocycle (resp. the parity function) associated to $\grid{G}$ is the sum of the cocycles (resp. the parity functions) associated with $\grid{G}_1,\grid{G}_2$.\qed
\end{lemma}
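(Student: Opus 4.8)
The plan is to unwind the definitions in the order the assertions appear: first that $\grid{G}=(\gridy{H}_1\cup\gridy{H}_2,\gridy{C}_1\triangle\gridy{C}_2)$ is a grid, then that the pointwise sum of the orientations is (after a small extension) an orientation of $\grid{G}$, and finally that the cocycle and parity constructions of Sections~\ref{sec: constructing cocycles}--\ref{sec: parity of grid} respect addition. For the first point, the hypothesis that $\grid{G}_1,\grid{G}_2$ have disjoint-or-identical connectors means $\gridy{C}_1\cup\gridy{C}_2$, hence also the sub-family $\gridy{C}_1\triangle\gridy{C}_2$, consists of pairwise disjoint connectors (connectors inside a single grid are disjoint, and two from different grids are disjoint or equal). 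For admissibility, set $(\gridy{C}_i)_{\hyp{h}}=\emptyset$ whenever $\hyp{h}\notin\gridy{H}_i$ — legitimate, since connectors of $\grid{G}_i$ are supported on hyperplanes of $\gridy{H}_i$ — so that $(\gridy{C}_1\triangle\gridy{C}_2)_{\hyp{h}}=(\gridy{C}_1)_{\hyp{h}}\triangle(\gridy{C}_2)_{\hyp{h}}$; then over $\Z/2$ a sum over a symmetric difference equals the sum of the two separate sums, so $\sum_{C\in(\gridy{C}_1\triangle\gridy{C}_2)_{\hyp{h}}}\type_{\hyp{h}}(C)=0+0=0$, and $\grid{G}$ is a grid.

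Next I would build the orientation. For $\hyp{h}\in\gridy{H}_1\cup\gridy{H}_2$ let $\alpha_{i,\bhyp{h}}$ be the given orientation if $\hyp{h}\in\gridy{H}_i$ and the constant function $0$ otherwise (a valid orientation of the empty collection). A priori the sum $\alpha_{1,\bhyp{h}}+\alpha_{2,\bhyp{h}}$ is only defined on $\bhyp{h}$ with all connectors of $\grid{G}_1$ and $\grid{G}_2$ on $\hyp{h}$ removed, which is smaller than the required domain $\bhyp{h}\setminus\bigcup(\gridy{C}_1\triangle\gridy{C}_2)_{\hyp{h}}$; the point that needs care is that it extends locally constantly across the \emph{cancelled} (identical) connectors. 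For such a connector $C$ on $\hyp{h},\hyp{k}$ and $\xi\in C$, Definition~\ref{def: orientation of hyp} applied to each $\grid{G}_i$ supplies a neighbourhood $V\ni\xi$ in $\bhyp{h}$ with $V\cap\bhyp{k}\subseteq C$, meeting no other connector, on which $\alpha_{i,\bhyp{h}}|_{V\setminus C}=\charf{\bhs{k}_i}$ for a halfspace $\hs{k}_i$ of $\hyp{k}$; since $\hs{k}_2\in\{\hs{k}_1,\comp{\hs{k}_1}\}$ and $V\setminus C$ misses $\bhyp{k}$, the sum is identically $0$ or $1$ on $V\setminus C$ and extends over $C$ by that value. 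One then checks Definition~\ref{def: orientation of hyp} for $\grid{G}$ by a short case analysis on a neighbourhood of a point meeting at most one member of the pairwise-disjoint family $\gridy{C}_1\cup\gridy{C}_2$ (no connector, or a cancelled one, gives $\alpha_{\bhyp{h}}$ locally constant; a surviving connector on $\hyp{h},\hyp{k}$ makes one summand $\charf{\bhs{k}'}$ for an orientation $\hs{k}'$ of $\hyp{k}$ and the other locally constant, so the sum is again of that form). By Claim~\ref{claim: orientation of grids} this exhibits $\alpha_{\bhyp{h}}=\alpha_{1,\bhyp{h}}+\alpha_{2,\bhyp{h}}$ as one of the two orientations of $\grid{G}$ along $\hyp{h}$.

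Finally I would handle the cocycle and parity function. Fixing $\CCv{x}_0$ and radii $R_0,R_1$ large enough to serve $\grid{G}_1,\grid{G}_2$ and $\grid{G}$ simultaneously, the edge-extension of Step~3 in Section~\ref{sec: constructing cocycles} is additive hyperplane-by-hyperplane: at a far-away vertex $v$ of $\hyp{h}$, choosing one halfspace $\hs{t}\in\Hs{T}_{\hyp{h}}$ that works for all three grids and contains $v$, both $\alpha_{\hyp{h}}^{\grid{G}}(v)$ and $\alpha_{\hyp{h}}^{\grid{G}_1}(v)+\alpha_{\hyp{h}}^{\grid{G}_2}(v)$ are read off $\alpha_{\bhyp{h}}=\alpha_{1,\bhyp{h}}+\alpha_{2,\bhyp{h}}$ restricted to $\bhs{t}$ in the same way (with a missing term when $\hyp{h}$ lies in only one $\gridy{H}_i$). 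Summing over $\hyp{h}\in\gridy{H}_1\cup\gridy{H}_2$ gives $\alpha^{\grid{G}}=\alpha^{\grid{G}_1}+\alpha^{\grid{G}_2}$ on $\CC{X}\setminus B(\CCv{x}_0,R)$ for $R$ large; hence $\delta(\pi^{\grid{G}_1}+\pi^{\grid{G}_2})=\alpha^{\grid{G}}$ there, and since $\CC{X}$ is connected at infinity the parity function is determined up to an additive constant, so $\pi^{\grid{G}}=\pi^{\grid{G}_1}+\pi^{\grid{G}_2}$. Since $\bdry\grid{G}\subseteq\bdry\grid{G}_1\cup\bdry\grid{G}_2$ (a point where $\alpha_{1,\bhyp{h}}+\alpha_{2,\bhyp{h}}=1$ lies in $\bdry\grid{G}_1$ or $\bdry\grid{G}_2$, and $\gridy{C}_1\triangle\gridy{C}_2\subseteq\gridy{C}_1\cup\gridy{C}_2$), the boundary extensions agree on the common domain as well.

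I expect the only genuine — and still modest — obstacle to be the bookkeeping at the cancelled connectors in the orientation step, namely verifying that the pointwise sum of the two orientations really does extend, locally constantly, across each identical connector; together with the mildly fiddly check that the Step~3 edge-extension commutes with sums. Everything else is an immediate unwinding of the definitions, which is presumably why the statement is advertised as "a lemma that follows from the definitions."
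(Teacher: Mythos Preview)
Your proposal is correct and is precisely the unwinding the paper has in mind: the paper gives no proof at all for this lemma (it is stated with a terminal \qed\ and introduced by ``The following lemma follows from the definitions''), so your write-up is not an alternative route but a fleshing-out of the omitted verification. The only place where any thought is required---extending $\alpha_{1,\bhyp{h}}+\alpha_{2,\bhyp{h}}$ across the cancelled identical connectors and checking additivity of the Step~3 edge-extension---you have handled correctly.
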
 

\begin{lemma}\label{lem: the parity locally}
Let $\xi\in\bdry\grid{G}\setminus \bigcup_{C\in\gridy{C}} C$ and let $\hyp{h}_1,\ldots,\hyp{h}_n\in \gridy{H}$ be all the hyperplanes such that $\xi\in\bhyp{h}_i$ and $\alpha_{\bhyp{h}_i}(\xi)=1$. Then there is a neighborhood $V$ of $\xi$ in $\CC{X}\cup\bdry\CC{X}\setminus\bdry\grid{G}$ on which \[ \Delta\pi = \sum_{i=1} ^n \Delta\pi_{\hyp{h}_i} \]
\end{lemma}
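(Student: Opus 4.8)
The plan is to reduce the statement to the case of hyperplane grids via the additivity of parity functions, and then to invoke Lemma~\ref{lem: parity extends to boundary}. Set $\grid{G}' := \grid{G} + \sum_{i=1}^n \grid{G}_{\hyp{h}_i}$, where $\grid{G}_{\hyp{h}_i}$ is the hyperplane grid of Example~\ref{eg: a hyperplane grid}, oriented as in Example~\ref{eg: parity of hyperplane grid} so that its parity function is the function $\pi_{\hyp{h}_i}$. Each $\hyp{h}_i$ already lies in $\gridy{H}$, the $\hyp{h}_i$ are pairwise distinct, and the hyperplane grids carry no connectors; hence Lemma~\ref{addition of grids} applies iteratively and shows that $\grid{G}'$ is again a grid with hyperplanes $\gridy{H}$ and connectors $\gridy{C}$, carrying the orientation $\alpha'_{\bhyp{h}_i} = \alpha_{\bhyp{h}_i} + 1$ and $\alpha'_{\bhyp{h}} = \alpha_{\bhyp{h}}$ for $\hyp{h}\in\gridy{H}\setminus\{\hyp{h}_1,\dots,\hyp{h}_n\}$, and one may take its parity function to be $\pi' = \pi + \sum_{i=1}^n \pi_{\hyp{h}_i}$ (outside a common large ball $B(\CCv{x}_0,R)$, up to a global constant). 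Thus proving $\Delta\pi = \sum_i \Delta\pi_{\hyp{h}_i}$ near $\xi$ is equivalent to proving that $\pi'$ is constant near $\xi$.

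Next I would choose a single neighborhood halfspace $\hs{t}$ of $\xi$, deep enough that $\hs{t}\cap B(\CCv{x}_0,R)=\emptyset$ and with $\bhs{t}$ contained in a small enough neighborhood of $\xi$, arranging: (a) $\bhs{t}$ misses every connector of $\gridy{C}$ — possible since $\xi\notin\bigcup_C C$ and there are finitely many connectors; (b) $\bhs{t}$ misses $\bhyp{k}$ for every $\hyp{k}\in\gridy{H}$ with $\xi\notin\bhyp{k}$; and (c) for each $\hyp{h}\in\gridy{H}$ with $\xi\in\bhyp{h}$, the orientation $\alpha_{\bhyp{h}}$ is constant, equal to $\alpha_{\bhyp{h}}(\xi)$, on $\bhyp{h}\cap\bhs{t}$ — possible because $\xi$ lies in no connector, so by the second alternative in Definition~\ref{def: orientation of hyp} it has a neighborhood in $\bhyp{h}$ on which $\alpha_{\bhyp{h}}$ is constant. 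Such a $\hs{t}$ exists because each requirement merely asks that $\bhs{t}$ lie inside a fixed open neighborhood of $\xi$, and the open limit sets of halfspaces form a basis whose members have diameters tending to $0$ along descending chains (Lemma~\ref{basis for topology} and the proof of Lemma~\ref{lem: parity extends to boundary}).

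I would then check, using (a)--(c), that $\bhs{t}\cap\bdry\grid{G}' = \emptyset$: no connector meets $\bhs{t}$; for $\hyp{h}\in\gridy{H}$ with $\xi\notin\bhyp{h}$ already $\bhyp{h}\cap\bhs{t}=\emptyset$; and for $\hyp{h}\in\gridy{H}$ with $\xi\in\bhyp{h}$ the function $\alpha'_{\bhyp{h}}$ is identically $0$ on $\bhyp{h}\cap\bhs{t}$, since $\alpha'_{\bhyp{h}_i}\equiv\alpha_{\bhyp{h}_i}(\xi)+1 = 0$ when $\hyp{h}=\hyp{h}_i$, while $\alpha'_{\bhyp{h}}\equiv\alpha_{\bhyp{h}}(\xi)=0$ otherwise (the $\hyp{h}_i$ being, by hypothesis, exactly the hyperplanes of $\gridy{H}$ with orientation $1$ at $\xi$). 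So $\hs{t}$ is a neighborhood halfspace of $\xi$ disjoint from $\bdry\grid{G}'$ and from $B(\CCv{x}_0,R)$, and the proof of Lemma~\ref{lem: parity extends to boundary} gives that $\pi'$ is constant, say $\equiv c$, on $\hs{t}^{(0)}\cup\obhs{t}$. Running the identical computation with $\alpha$ in place of $\alpha'$ shows $\bdry\grid{G}\cap\bhs{t} = \bhs{t}\cap(\bhyp{h}_1\cup\dots\cup\bhyp{h}_n)$. Hence on the neighborhood $V := \hs{t}\cup\obhs{t}$ of $\xi$ the set $V\setminus\bdry\grid{G}$ coincides with $V\setminus\bigcup_i\bhyp{h}_i$, where $\pi$, all the $\pi_{\hyp{h}_i}$ and $\pi'$ are defined, $\pi' = \pi + \sum_i\pi_{\hyp{h}_i}$ holds, and $\pi'\equiv c$; subtracting the constant $c$ yields $\pi = c+\sum_i\pi_{\hyp{h}_i}$ and therefore $\Delta\pi = \sum_{i=1}^n\Delta\pi_{\hyp{h}_i}$ on $V\setminus\bdry\grid{G}$.

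I expect the only genuine difficulty to be the bookkeeping in the middle step: choosing $\hs{t}$ so that near $\xi$ the auxiliary grid $\grid{G}'$ has empty trace — equivalently, so that $\bdry\grid{G}$ agrees locally with $\bhyp{h}_1\cup\dots\cup\bhyp{h}_n$ — which is exactly what makes the orientations locally constant and lets Lemma~\ref{lem: parity extends to boundary} apply, together with keeping track of the domains of the several partially defined parity functions. The only structural input is the local constancy of orientations away from connectors, which is built into Definition~\ref{def: orientation of hyp}.
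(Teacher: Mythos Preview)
Your proof is correct and takes a somewhat different route from the paper's. The paper argues directly at the cocycle level: it chooses a halfspace neighborhood $\hs{k}$ of $\xi$ disjoint from the connectors and from $B(\CCv{x}_0, R)$, with $\hs{k} \cap \hyp{h}_i \in \Hs{T}_{\hyp{h}_i}$ for each $i$ (the sets from Step~2 of the cocycle construction in Section~\ref{sec: constructing cocycles}), and observes that on $\hs{k}$ the cocycle $\alpha$ of $\grid{G}$ literally equals $\sum_i \charf{\hyp{h}_i \cap \hs{k}}$, from which the identity of parity functions follows immediately. You instead work one level up, at the level of grids and parity functions: you form the auxiliary grid $\grid{G}' = \grid{G} + \sum_i \grid{G}_{\hyp{h}_i}$ via Lemma~\ref{addition of grids}, verify that $\xi \notin \bdry\grid{G}'$, and then invoke (the proof of) Lemma~\ref{lem: parity extends to boundary} to conclude that $\pi'$ is locally constant near $\xi$. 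The paper's approach is shorter because it unwinds the cocycle construction directly; yours is conceptually cleaner in that it stays at the black-box level of orientations and parities, and makes the role of additivity explicit. Both ultimately rest on the same ingredient --- that near a point not in any connector each orientation $\alpha_{\bhyp{h}}$ is locally constant --- which the paper encodes via the condition $\hs{k} \cap \hyp{h}_i \in \Hs{T}_{\hyp{h}_i}$ and you encode via your condition~(c).
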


\begin{proof}
Let $\hs{k}$ be a halfspace neighborhood of $\xi$, disjoint from the connectors $\gridy{C}$ and from $B(\CCv{x}_0,R)$ such that for $i=1,\ldots,n$, $\hs{k}\cap\hyp{h}_i \in \Hs{T}_{\hyp{h}_i}$ (see \ref{step2} in Section \ref{sec: constructing cocycles}). Then, since $\alpha_{\bhyp{h}_i} (\xi)=1$ we deduce that $\alpha (\hyp{h} \cap \hs{k})=1$. Thus, on $\hs{k}$ the cocycle $\alpha$ is the sum of the corresponding hyperplane cocycles (see Example \ref{eg: parity of hyperplane grid}), i.e $\alpha = \sum _{i=1} ^n \charf{\hyp{h}_i\cap\hs{k}}$, and the lemma follows.
\end{proof}

By subdividing a path which avoids the connectors of a grid into small enough segments and applying the previous lemma, one can deduce the following.

\begin{corollary}\label{prop: computing the parity}
Let $P$ be a path in $\bdry\CC{X}$ whose endpoints are in $\bdry\CC{X}\setminus\bdry\grid{G}$ and which is disjoint from $\bigcup_{C\in\gridy{C}} C$.
For each $\hyp{h}\in \gridy{H}$ let $J_{\hyp{h},1},\ldots,J_{\hyp{h},n}$ be $n=n(\hyp{h})$ disjoint subintervals of $P$ such that $P\cap\alpha_{\bhyp{h}} ^{-1} (1)\subset \bigcup_{j=1}^n J_{\hyp{h},j}$, the endpoints of each subinterval $J_{\hyp{h},i}$ are disjoint from $\bhyp{h}$ and its interior intersects $\bhyp{h}$ in $\alpha_{\bhyp{h}} ^{-1} (1)$. 
Then \[ \Delta\pi(P) =\sum_{\hyp{h} \in \gridy{H}} \sum_{j=1} ^n \Delta\pi_{\hyp{h}} (J_{\hyp{h},j}). \]\qed
\end{corollary}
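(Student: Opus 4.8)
The plan is to reduce the computation of $\Delta\pi(P)$ to a local, summand-by-summand calculation using Lemma \ref{lem: the parity locally}, exactly as the corollary's preamble suggests. First I would subdivide the path $P$ into finitely many consecutive segments $P = P_1 * P_2 * \cdots * P_m$ such that the endpoint of each $P_\ell$ lies in $\bdry\CC{X}\setminus\bdry\grid{G}$ (possible since $\bdry\grid{G}$ is a finite union of closed sets, each of which is either a connector or a subset of the $\bhyp{h}$'s, and the domain of $\pi$ is open) and such that each $P_\ell$ is small enough to lie inside a neighbourhood $V_\ell$ of the type produced by Lemma \ref{lem: the parity locally} — more precisely, so that the set of hyperplanes $\hyp{h}\in\gridy{H}$ with $P_\ell\cap\alpha_{\bhyp{h}}^{-1}(1)\ne\emptyset$ is exactly the collection $\hyp{h}_1,\dots,\hyp{h}_n$ of hyperplanes through a single point of $\bdry\grid{G}$ that $P_\ell$ meets, and so that $P_\ell$ is disjoint from the connectors. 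Such a subdivision exists by compactness of $P$, the local finiteness of $\gridy{H}$ near any point, and the fact that $P$ avoids $\bigcup_{C\in\gridy{C}}C$.

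Then on each segment $P_\ell$ I would apply Lemma \ref{lem: the parity locally} to get $\Delta\pi(P_\ell) = \sum_i \Delta\pi_{\hyp{h}_i}(P_\ell)$, where the sum is over the relevant hyperplanes for that segment, and I would note that for every other $\hyp{h}\in\gridy{H}$ we have $\Delta\pi_{\hyp{h}}(P_\ell)=0$ because $P_\ell$ does not cross $\bhyp{h}$ — so in fact $\Delta\pi(P_\ell) = \sum_{\hyp{h}\in\gridy{H}} \Delta\pi_{\hyp{h}}(P_\ell)$. Summing over $\ell$ and using that both $\Delta\pi$ and each $\Delta\pi_{\hyp{h}}$ are additive under concatenation of paths (being differences of values of functions at endpoints, telescoping through the subdivision points which all lie in the common domain), I obtain $\Delta\pi(P) = \sum_{\hyp{h}\in\gridy{H}} \Delta\pi_{\hyp{h}}(P)$.

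The remaining task is to match $\sum_{\hyp{h}} \Delta\pi_{\hyp{h}}(P)$ with the double sum $\sum_{\hyp{h}}\sum_j \Delta\pi_{\hyp{h}}(J_{\hyp{h},j})$ in the statement. For a fixed $\hyp{h}$, recall from Example \ref{eg: parity of hyperplane grid} that $\Delta\pi_{\hyp{h}}$ of a path counts, mod $2$, the parity of the number of times the path crosses $\bhyp{h}$ (i.e. passes between the two sides of $\hyp{h}$). The hypothesis is that $P\cap\alpha_{\bhyp{h}}^{-1}(1)\subset\bigcup_j J_{\hyp{h},j}$ with the $J_{\hyp{h},j}$ having endpoints off $\bhyp{h}$; on the complement $P\setminus\bigcup_j J_{\hyp{h},j}$ the path meets $\bhyp{h}$ only where $\alpha_{\bhyp{h}}=0$, and such crossings come in a framework where — by the definition of an orientation and of $\Delta\pi_{\hyp{h}}$ — they contribute $0$ to $\Delta\pi_{\hyp{h}}$; hence $\Delta\pi_{\hyp{h}}(P) = \sum_j \Delta\pi_{\hyp{h}}(J_{\hyp{h},j})$, again by additivity over a suitable subdivision of $P$ adapted to the $J_{\hyp{h},j}$. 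Combining gives the claimed formula. The main obstacle I expect is the bookkeeping in this last step: carefully justifying that crossings of $\bhyp{h}$ outside the designated subintervals, or multiple crossings within one $J_{\hyp{h},j}$, contribute correctly mod $2$ — this is where one must be precise about what $\Delta\pi_{\hyp{h}}$ measures versus where $\alpha_{\bhyp{h}}=1$, and ensure all subdivision points are chosen in the domain of $\pi$ so that the telescoping is valid.
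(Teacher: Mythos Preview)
Your overall strategy---subdivide $P$ into short segments and apply Lemma~\ref{lem: the parity locally} on each---is exactly what the paper intends. However, your execution contains a genuine error in the passage from the local formula to the global one.

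The problem is your intermediate identity $\Delta\pi(P) = \sum_{\hyp{h}\in\gridy{H}} \Delta\pi_{\hyp{h}}(P)$. This is false in general. Recall from Example~\ref{eg: parity of hyperplane grid} that $\Delta\pi_{\hyp{h}}$ is simply the indicator of whether the two endpoints lie on opposite sides of $\hyp{h}$; it has nothing to do with the orientation $\alpha_{\bhyp{h}}$. So if a segment $P_\ell$ crosses $\bhyp{h}$ at a point $\xi$ where $\alpha_{\bhyp{h}}(\xi)=0$, then $\hyp{h}$ is \emph{not} among the hyperplanes $\hyp{h}_1,\dots,\hyp{h}_n$ appearing in Lemma~\ref{lem: the parity locally} (which requires $\alpha_{\bhyp{h}_i}(\xi)=1$), yet $\Delta\pi_{\hyp{h}}(P_\ell)=1$. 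Your claim ``for every other $\hyp{h}\in\gridy{H}$ we have $\Delta\pi_{\hyp{h}}(P_\ell)=0$ because $P_\ell$ does not cross $\bhyp{h}$'' is therefore wrong, and so is the subsequent claim that crossings where $\alpha_{\bhyp{h}}=0$ ``contribute $0$ to $\Delta\pi_{\hyp{h}}$''. A clean counterexample: take $\gridy{H}=\{\hyp{h}\}$, $\gridy{C}=\emptyset$, and the orientation $\alpha_{\bhyp{h}}\equiv 0$; then $\Delta\pi\equiv 0$, while $\Delta\pi_{\hyp{h}}(P)=1$ for any $P$ crossing $\hyp{h}$ once.

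The fix is to skip the intermediate identity altogether. After subdividing (finely enough, and so that the endpoints of the $J_{\hyp{h},j}$ are among the subdivision points), Lemma~\ref{lem: the parity locally} gives $\Delta\pi(P_\ell)=\sum_{\hyp{h}\in S_\ell}\Delta\pi_{\hyp{h}}(P_\ell)$ where $S_\ell$ consists only of those $\hyp{h}$ with $\alpha_{\bhyp{h}}=1$ at the nearby point of $\bhyp{h}$. When you exchange the order of summation and fix $\hyp{h}$, the hypothesis that the interior of each $J_{\hyp{h},j}$ meets $\bhyp{h}$ only in $\alpha_{\bhyp{h}}^{-1}(1)$ tells you that the segments $P_\ell$ with $\hyp{h}\in S_\ell$ are exactly those lying inside some $J_{\hyp{h},j}$ and actually meeting $\bhyp{h}$; for the remaining $P_\ell\subset J_{\hyp{h},j}$ (those not meeting $\bhyp{h}$) one has $\Delta\pi_{\hyp{h}}(P_\ell)=0$ trivially. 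Telescoping over $P_\ell\subset J_{\hyp{h},j}$ then gives $\Delta\pi_{\hyp{h}}(J_{\hyp{h},j})$ directly, and the segments outside $\bigcup_j J_{\hyp{h},j}$ simply do not appear in the sum for $\hyp{h}$. This is the bookkeeping you flagged as the ``main obstacle'', and it must be done without the false intermediate step.
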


\begin{proposition}\label{prop: two state solution}
Under the assumptions of Subsection \ref{subsec: sum of prelims}, there is no connector of type $(1,0)$. 
\end{proposition}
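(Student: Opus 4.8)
The plan is to show that a connector $C$ of type $(1,0)$ supported on hyperplanes $\hyp{h},\hyp{k}$ would produce, via the machinery of grids and parity functions just developed, a contradiction with one of the established topological facts about $\bdry G$ — most naturally, the fact that $\bdry G$ has no local cut points (Corollary \ref{cor: no local cutpoints}) or that it is not separated by finitely many points. The key idea is that the parity function of a grid is built from the vanishing of the first cohomology at infinity, and a grid whose ``boundary set'' $\bdry\grid{G}$ is topologically small — e.g. a finite union of arcs, or even a single point — cannot separate $\bdry G$, yet a type $(1,0)$ connector will force the parity function to take two values across such a small set.

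**First I would** form the ``arc grid'' of Example \ref{eg: an arc grid} out of the hypothetical type $(1,0)$ connector. Concretely, suppose $C$ is supported on $\hyp{h},\hyp{k}$ with $\type(C,\hyp{h},\hyp{k})=(1,0)$. The asymmetry is the crux: $\type_{\hyp{h}}(C)=1$ means the two sides of $\hyp{k}$ are genuinely swapped as one crosses $C$ along $\bhyp{h}$, while $\type_{\hyp{k}}(C)=0$ means no such swap happens along $\bhyp{k}$. I would build a grid $\grid{G}$ using $\hyp{h}$, $\hyp{k}$, and possibly one auxiliary hyperplane $\hyp{k}'$ together with a second connector $C'$ on $\hyp{h},\hyp{k}'$ of type $(1,0)$ chosen so that $\{C,C'\}$ is admissible on $\hyp{h}$ (i.e. $\type_{\hyp{h}}(C)+\type_{\hyp{h}}(C')=0$), while each of $\gridy{C}_{\hyp{k}}=\{C\}$ and $\gridy{C}_{\hyp{k}'}=\{C'\}$ is automatically admissible since $\type_{\hyp{k}}(C)=\type_{\hyp{k}'}(C')=0$ (using Remark \ref{rmk: Examples of admissible hyperplanes}). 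This yields a bona fide oriented grid whose ``boundary set'' $\bdry\grid{G}$ is contained in $\bhyp{h}$ together with two short arcs near $C$ and $C'$ on $\bhyp{k},\bhyp{k}'$ — in particular, $\bdry\grid{G}$ is contained in the limit set of $\hyp{h}$ up to two subarcs of other circles.

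**Then I would** exploit Corollary \ref{prop: computing the parity} (``computing the parity'') to show the parity function $\pi$ of $\grid{G}$ is non-constant on $\bdry\CC{X}\setminus\bdry\grid{G}$: take a path $P$ from a point $\xi$ slightly on one side of $C$ (inside $\obhs{k}$, say) to a point $\zeta$ slightly on the other side, running through a region where it meets $\bhyp{h}$ transversally an odd number of times (because $\type_{\hyp{h}}(C)=1$) but meets the $\hyp{k},\hyp{k}'$ contributions an even number of times (because $\type_{\hyp{k}}(C)=0$); the formula then gives $\Delta\pi(P)=1$. Hence $\pi$ separates $\bdry\CC{X}\setminus\bdry\grid{G}$ into at least two pieces. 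The final move is to observe that $\bdry\grid{G}$, being essentially the circle $\bhyp{h}$ with two tiny arc-detours, is ``thin'': by the No Blob Lemma (Lemma \ref{lem: no blob}) and path-connectivity of $\bdry G$ (Lemma \ref{lem: bdry is path connected}), one can route a path around the relevant portion of $\bhyp{h}$ avoiding $\bdry\grid{G}$ entirely, contradicting that $\pi$ separates the two sides. Equivalently and more cleanly: a type $(1,0)$ connector would let us build a grid whose $\bdry\grid{G}$ is a single point (after shrinking $C$ via Remark \ref{rmk: can partition connectors} and combining with a hyperplane grid for $\hyp{h}$ to cancel the bulk of $\bhyp{h}$), and then a non-constant parity function across a single point contradicts Corollary \ref{cor: no local cutpoints}.

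**The hard part will be** the bookkeeping that turns the local statement ``$\type_{\hyp{h}}(C)=1$, $\type_{\hyp{k}}(C)=0$'' into a genuinely non-constant global parity function while keeping $\bdry\grid{G}$ small enough to violate non-separation — in particular, controlling the contributions of $\alpha_{\bhyp{k}}$ and $\alpha_{\bhyp{k}'}$ (which live on arcs of the $\hyp{k}$-circles) so that they cancel in the parity computation, and arranging that the portion of $\bhyp{h}$ appearing in $\bdry\grid{G}$ can be canceled by adding the hyperplane grid $\pi_{\hyp{h}}$ of Example \ref{eg: parity of hyperplane grid} via Lemma \ref{addition of grids}. I expect that the cleanest route is: (i) normalize so $\bdry\grid{G}$ meets $\bhyp{h}$ only in the closure of a single tiny arc around $C$; (ii) add $\pi_{\hyp{h}}$ to kill the circle; (iii) what remains is a parity function defined off a small set near $C$ that nonetheless distinguishes the two local sides of $C$ — and since $C\subsetneq\bhyp{h}\cap\bhyp{k}$ sits inside a point or Cantor-set-type intersection which cannot separate $\bdry G$ by Corollary \ref{cor: no local cutpoints}, this is the contradiction. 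The subtlety is making sure step (i) is legitimate, i.e. that shrinking the connector by Remark \ref{rmk: can partition connectors} preserves the type data in the way needed; this is where I would spend the most care.
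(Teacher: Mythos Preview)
Your proposal has the right instinct --- use the grid/parity machinery to derive a contradiction --- but the central step does not go through. The problem is your claim that the parity function of a single arc grid (or of a grid with small $\bdry\grid{G}$) is non-constant. It is not: if $\bdry\grid{G}$ is a short arc of $\bhyp{h}$ contained in a small ball, then by the No Blob Lemma the parity is \emph{constant} outside a slightly larger ball, and any local two-sidedness near the arc is not a contradiction (arcs locally separate surfaces). Your computation ``$\Delta\pi(P)=1$ because $\type_{\hyp{h}}(C)=1$'' conflates the type datum (which compares sides of $\hyp{k}$ along an interval of $\bhyp{h}$) with a transversal crossing count of some extrinsic path $P$; these are different things, and no choice of $P$ forces $\Delta\pi(P)=1$ for the arc grid. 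Likewise, you cannot shrink $\bdry\grid{G}$ to a single point: admissibility on $\hyp{h}$ requires two disjoint connectors $C,C'$, and adding the hyperplane grid $\pi_{\hyp{h}}$ only swaps the short arc for the complementary long arc --- it never cancels the whole circle.

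The paper's proof supplies exactly the missing idea. First, by the dynamics of $\Stab(\hyp{h})$ on $\bhyp{h}$, once a single $(1,0)$ connector exists one can produce such connectors in any small open set of $\bhyp{h}$. Second, fix $\xi\in\obhs{h}$ and $\zeta\in\obhs{\comp{h}}$ and use the Cyclic Connectivity Theorem (no cut points) to get two disjoint paths $P_1,P_2$ from $\xi$ to $\zeta$. Third, cover $A_1=P_1\cap\bhyp{h}$ by finitely many intervals $I_j$ of $\bhyp{h}$ disjoint from $A_2=P_2\cap\bhyp{h}$, and at each endpoint of each $I_j$ place a small $(1,0)$ connector (possible by the dynamics). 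The resulting sum of arc grids is oriented so that $\alpha_{\bhyp{h}}=1$ exactly on $\bigcup_j I_j\supseteq A_1$ and $0$ on $A_2$, while the auxiliary hyperplanes $\hyp{k}_j,\hyp{k}'_j$ carry $\alpha\equiv 0$ and miss both $P_i$. Now Corollary~\ref{prop: computing the parity} along $P_1$ gives $\Delta\pi(P_1)=\Delta\pi_{\hyp{h}}(P_1)=1$ (the grid is indistinguishable from the hyperplane grid of $\hyp{h}$ along $P_1$), while $P_2$ avoids $\bdry\grid{G}$ entirely so $\Delta\pi(P_2)=0$ --- the contradiction. The point you were missing is that one does not make $\bdry\grid{G}$ globally small; one makes it \emph{large} enough to mimic $\hyp{h}$ along $P_1$ while carefully avoiding $P_2$.
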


\begin{proof}
Assume that $\hyp{h}$ and $\hyp{k}$ have a connector $C$ such that $\type(C,\hyp{h},\hyp{k})=(1,0)$. By the dynamics of $\Stab(\hyp{h})$ on $\bhyp{h}$ we can find such $\hyp{k}$ and $C$ in any small open set in $\bhyp{h}$.
	
	\begin{figure}[!ht]
		\begin{center}
			\def\svgwidth{\textwidth}
			\input{./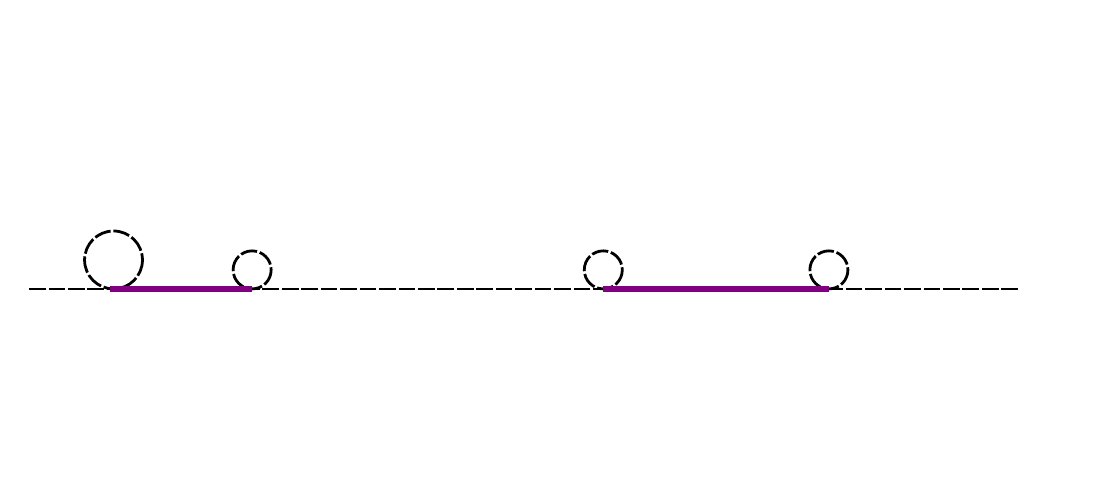_tex}
			\caption{The hyperplanes $\hyp{h}$ and $\hyp{k}_i,\hyp{k}'_i$ are drawn as dotted lines. The paths $P_1$ and $P_2$ are shown in red and blue. The boundary of the grid which contradicts the existence of type $(1,0)$ connectors is shown in purple.}
			\label{fig: two state}
		\end{center}
	\end{figure}

Since $\CC{X}$ is assumed to be essential, the open limit sets of  halfspaces $\obhs{h}$ and $\comp{\obhs{h}}$ are non-empty. Let $\xi,\zeta$ be points in $\obhs{h}$ and $\comp{\obhs{h}}$ respectively. By the Cyclic Connectivity Theorem (by Ayres \cite{Ayr29} and Whyburn \cite{Why31}), since our space has no cutpoints we can find two disjoint paths $P_1,P_2$ connecting $\xi$ and $\zeta$ (see Figure \ref{fig: two state}). Let $A_i=\bhyp{h}\cap P_i$, $i=1,2$.

Let $I_1,I_2,\ldots,I_n$ be disjoint intervals of $\hyp{h}$ disjoint from $A_2$ and such that the union of their interiors contain $A_1$. Let $\delta>0$ be such that  $\delta$-neighborhoods of the endpoints of $I_1,\ldots,I_n$ are pairwise disjoint are disjoint from $P_1\cup P_2$. For each $I_i$ find two hyperplanes $\hyp{k}_i,\hyp{k}'_i$ contained in the $\delta$-neighborhoods of the two endpoints of $I_i$, and two connectors $C_i,C'_i$ supported on  $\hyp{h}$ and $\hyp{k}_i,\hyp{k}'_i$ respectively,  with $\type(C_i,\hyp{h},\hyp{k}_i)=\type(C'_i,\hyp{h},\hyp{k}'_i)=(1,0)$, as described above, in each such open neighborhood.

The grid $(\gridy{H}=\{\hyp{h}_1,\hyp{k}_1,\hyp{k}'_1\ldots \hyp{k}_n,\hyp{k}'_n\},\gridy{C}=\{C_1,C'_1,\ldots,C_n,C'_n\})$ is the sum of arc grids described in the examples of grids, Example \ref{eg: an arc grid}. Orient the grid such that $\alpha_{\bhyp{h}}$ is $1$ on $A_1$ (and $0$ on $A_2$).

By Corollary \ref{prop: computing the parity} applied to $P_1$ (with $n(\hyp{h})=1, J_{\hyp{h},1}=P_1$ and $n(\hyp{k}_i)=n(\hyp{k}'_i)=0$ for $i=1,\ldots,n$), we get that $\pi(\xi)-\pi(\zeta) = \Delta\pi(P_1) = \Delta\pi_{\hyp{h}} (P_1) = 1$ because $P_1$ intersects the grid exactly as it would intersect the hyperplane grid $\hyp{h}$, and we recall that $\xi$ and $\zeta$ are on different sides of $\hyp{h}$ thus have different parity.

On the other hand $P_2$ does not intersect the grid, and thus $\pi(\xi)-\pi(\zeta) = \Delta\pi(P_2) = 0$. This contradicts the existence of connectors of type $(1,0)$.
\end{proof}

The following useful corollary is a direct consequence of Proposition  \ref{prop: two state solution}.
\begin{corollary}\label{cor: two states solution}
Given two hyperplanes $\hyp{h} $and $\hyp{k}$. Assume that there exists two points of $\bhyp{h}$ on different sides of $\bhyp{k}$. Then the hyperplanes $\hyp{h}$ and $\hyp{k}$ share a $(1,1)$ connector. \qed
\end{corollary}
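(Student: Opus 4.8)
The plan is to extract, from the two given points, a connector supported on $\hyp{h}$ and $\hyp{k}$ whose $\type_{\hyp{h}}$-coordinate equals $1$, and then to invoke Proposition \ref{prop: two state solution} (for the pair taken in both orders) to conclude that its type is $(1,1)$.

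Write $K:=\bhyp{h}\cap\bhyp{k}$, and let $\xi_1\in\bhyp{h}\cap\obhs{k}$ and $\xi_2\in\bhyp{h}\cap\obhs{\comp{k}}$ be the two points on different sides of $\bhyp{k}$. First I would record the relevant topology of $K$: the sets $\obhs{k}$ and $\obhs{\comp{k}}$ are disjoint and open with union $\bdry\CC{X}\setminus\bhyp{k}$, so since the circle $\bhyp{h}$ is connected and meets both of them it must meet $\bhyp{k}$; thus $K\neq\emptyset$. Being the intersection of two closed subsets of $\bdry\CC{X}$, the set $K$ is closed in $\bhyp{h}$, so $\bhyp{h}\setminus K$ is a disjoint union of open arcs, and each such arc is connected and misses $\bhyp{k}$, hence the function $\pi_{\hyp{k}}$ of Example \ref{eg: parity of hyperplane grid} is constant on it. Let $f:=\pi_{\hyp{k}}|_{\bhyp{h}\setminus K}$; then $f(\xi_1)\neq f(\xi_2)$, so $f$ is non-constant.

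Next I would build the connector. Put $a:=\xi_2$ and $b:=\xi_1$, let $I$ be one of the two closed arcs of $\bhyp{h}$ with endpoints $\{a,b\}$, and let $I'$ be the complementary closed arc, so that $I\cup I'=\bhyp{h}$ and $I\cap I'=\{a,b\}$ is disjoint from $K$; hence $K=(K\cap I)\sqcup(K\cap I')$. Set $C:=K\cap I$. Since $I$ is closed and $\mathrm{int}\,I$ is open with $K\cap I=K\cap \mathrm{int}\,I$, the set $C$ is clopen in $K$. Moreover $C$ is non-empty and a proper subset of $K$: if, say, $K\cap I'$ were empty, then $\mathrm{int}\,I'$ would be a connected subset of $\bhyp{h}\setminus K$, hence contained in a single complementary arc $J$, forcing $a,b\in J$ and therefore $f(a)=f(b)$, a contradiction; the same argument excludes $K\cap I=\emptyset$. (In particular $K$ is disconnected, so it is neither of the circles $\bhyp{h},\bhyp{k}$.) Thus $C$ is a connector supported on $\hyp{h},\hyp{k}$.

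Finally, using $\{I\}$ as the interval cover of $C$ together with Remark \ref{rmk: type and parity}, we get $\type_{\hyp{h}}(C)=\type_{\hyp{h}}(I)=\Delta\pi_{\hyp{k}}(I)$, and this equals $1$ because the endpoints $a\in\obhs{\comp{k}}$ and $b\in\obhs{k}$ of $I$ are separated by $\hyp{k}$ (Example \ref{eg: parity of hyperplane grid}). By Proposition \ref{prop: two state solution}, applied to the ordered pairs $(\hyp{h},\hyp{k})$ and $(\hyp{k},\hyp{h})$, there is no connector of type $(1,0)$ or of type $(0,1)$; since $\type_{\hyp{h}}(C)=1$ this forces $\type_{\hyp{k}}(C)=1$, i.e. $\type(C,\hyp{h},\hyp{k})=(1,1)$, as required. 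The one place that needs care is the third paragraph — showing $C$ is simultaneously non-empty and proper, which is exactly where the hypothesis enters (through the non-constancy of $f$) — together with checking $C$ literally satisfies the definition of a connector; the remaining steps are immediate from Remark \ref{rmk: type and parity} and Proposition \ref{prop: two state solution}.
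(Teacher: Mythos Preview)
Your argument is correct and is precisely the reasoning the paper leaves implicit behind the bare \qed: choose an arc of $\bhyp{h}$ between the two given points, set $C$ to be its intersection with $\bhyp{h}\cap\bhyp{k}$, verify this is a genuine connector with $\type_{\hyp{h}}(C)=1$, and then invoke Proposition~\ref{prop: two state solution} to rule out type $(1,0)$. The only minor redundancy is that you need not invoke the proposition for both ordered pairs---having established $\type_{\hyp{h}}(C)=1$, a single application already forces $\type_{\hyp{k}}(C)=1$.
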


\section{Approximations of curves and arcs}
\label{sec: PH curves}

Let $J$ be a Jordan curve or arc.

In this section, we assume that the Jordan arc $J$ is parametrized by $[0,1]$. Given a parameter $t$ the associated point is $J(t)$, conversely if $x$ is a point on $J$ its parameter will be denoted $\inv{x}$. A segment between two points $\zeta$ and $\xi$ on the arc $J$ is denoted $[\zeta,\xi]_J$. 

\begin{definition}\label{def: PH curve}
A \emph{piecewise-hyperplane curve} or \emph{PH curve} (resp. \emph{PH arc})  is a (not necessarily simple) parametrized curve (resp. arc) $P$ on $\bdry\CC{X}$ which has a partition into finitely many segments $I_1,\ldots,I_n$ such that:
\begin{enumerate}
\item \label{point1} each segment $I_i$ is a segments of the boundary $\bhyp{h}$ of some hyperplane $\hyp{h}$,
\item\label{point2} any two segments on each hyperplane $\hyp{h}$ are disjoint,
\item \label{point3}any two consecutive hyperplane segments on $\bhyp{h},\bhyp{k}$ are connected along a limit point of their intersection, called a \emph{vertex} of the curve,  and the pair $(\hyp{h},\hyp{k})$ supports a $(1,1)$ connector.
\end{enumerate} 

An $\epsilon$-\emph{approximating PH curve} (resp. arc) is a PH curve (resp. arc) which is at distance less than $\epsilon$ from $J$ with respect to the sup metric.

\end{definition}

The following lemmas show that given a Jordan curve one can construct PH approximations of the curve with certain technical restrictions that will be useful later on. We first prove it in the case of an arc.
A set $\Hs{H}$ of limit sets of halfspaces is a \emph{$\delta$-cover} of a set $S$ in the boundary, if it is a cover of $S$ and if every element in $\Hs{H}$ has diameter less than $\delta$.


\begin{lemma}\label{lem: existence of PH approximation of arc}
Let $J$ be a Jordan arc such that the two endpoints $\zeta_0$ and $\zeta_1$ of $J$ are on the boundaries of the hyperplanes $\bhyp{k}_0$ and $\bhyp{k}_1$ respectively. Let $\hyp{h}$ be a hyperplane such that $\bhyp{h}$ does not intersect $J$ except maybe for it endpoints. Let $\Hs{K}$ be a finite set of hyperplanes. For every $\epsilon>0$, there exists $\delta$ such that for any two distinct halfspaces $\obhs{h}_0$ and $\obhs{h}_1$ not in $\Hs{K}$ of diameter less than $\delta$ covering $\zeta_0$ and $\zeta_1$ respectively, there exists a $\delta$-cover $\Hs{H} = \left\{ \obhs{h}_i\right\}$ of $J$ and an $\epsilon$-approximating PH arc supported on $\cup \bhyp{h_i}$, such that
\begin{itemize}
\item the set $\Hs{H}$ is disjoint from $\Hs{K}$,
\item the PH arc has at most one segment on each $\bhyp{h_i}$,
\item the endpoints of the PH arc lie on $\bhyp{k}_0$ and $\bhyp{k}_1$,
\item the first and last segments of the PH arc are supported on $\bhyp{h}_0$ and $\bhyp{h}_1$ respectively,
\item the hyperplanes $\hyp{h}_0$ and $\hyp{h}_1$  share a $(1,1)$ connector with $\hyp{k}_0$ and $\hyp{k}_1$ respectively,
\item the PH arc may intersect $\bhyp{h}$ only on $\bhyp{h}_0$ and $\bhyp{h}_1$.
\end{itemize}
\end{lemma}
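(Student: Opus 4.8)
The plan is to reduce the statement to the construction of a finite chain of ``small'' hyperplanes following $J$, and to extract every $(1,1)$ connector we need from Corollary \ref{cor: two states solution}, which says that two hyperplanes share such a connector as soon as one of their limit circles has points on both sides of the other. Concretely, I would produce halfspaces $\obhs{g}_0=\obhs{h}_0,\obhs{g}_1,\dots,\obhs{g}_N=\obhs{h}_1$, all with $\diam\bhs{g}_j<\delta$, each with bounding hyperplane not in $\Hs{K}$, together covering $J$ monotonically, and with each consecutive pair $\{\hyp{g}_j,\hyp{g}_{j+1}\}$ supporting a $(1,1)$ connector; the PH arc is then the concatenation of sub-arcs $I_j\subset\bhyp{g}_j$, where $I_j$ joins a vertex $v_j$ lying on the connector shared by $\hyp{g}_{j-1},\hyp{g}_j$ to a vertex $v_{j+1}$ lying on the connector shared by $\hyp{g}_j,\hyp{g}_{j+1}$. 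Here $\delta$ is chosen, after fixing a visual metric on $\bdry\CC{X}$, smaller than $\epsilon$, smaller than $\tfrac12\min(\diam\bhyp{k}_0,\diam\bhyp{k}_1)$, small enough that every halfspace with $\diam\bhs{}<\delta$ has bounding hyperplane outside $\Hs{K}$ and, if its limit set meets a fixed compact interior sub-arc $J'$ of $J$, has limit set disjoint from $\bhyp{h}$; and small enough for the moduli coming from Lemma \ref{lem: no blob} and from local connectedness of $\bdry\CC{X}$.

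The endpoints are easy. Given $\obhs{h}_0\ni\zeta_0$ with $\diam\bhs{h}_0<\delta$: the circle $\bhyp{k}_0$ passes through $\zeta_0\in\obhs{h}_0$ and, having diameter larger than $\delta>\diam\bhs{h}_0$, is not contained in $\bhs{h}_0$; hence $\bhyp{k}_0$ meets both $\obhs{h}_0$ and $\comp{\obhs{h}_0}$, and Corollary \ref{cor: two states solution} yields a $(1,1)$ connector on $\{\hyp{h}_0,\hyp{k}_0\}$. I place the starting endpoint $v_{\mathrm{start}}$ of the arc on it, so $v_{\mathrm{start}}\in\bhyp{k}_0\cap\bhyp{h}_0$ and $d(v_{\mathrm{start}},\zeta_0)<\delta$; symmetrically I obtain a $(1,1)$ connector on $\{\hyp{h}_1,\hyp{k}_1\}$ and an endpoint $v_{\mathrm{end}}\in\bhyp{k}_1\cap\bhyp{h}_1$.

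I would then build the chain greedily. Suppose $\obhs{g}_j$ is defined and covers $J$ along a maximal interval $(a_j,b_j)$ with $b_j<1$. Since $J(b_j)$ is a limit of points of $\obhs{g}_j$ but does not lie in the open set $\obhs{g}_j$ (otherwise the coverage would extend), we have $J(b_j)\in\bhyp{g}_j$; moreover $J(b_j)$ is an interior point of $J$, hence (by injectivity of $J$) not on $\bhyp{h}$ and not on any circle in $\Hs{K}$. Choose a halfspace $\obhs{g}_{j+1}$ with $J(b_j)\in\obhs{g}_{j+1}$, with $\diam\bhs{g}_{j+1}<\delta$, with limit set disjoint from $\bhyp{h}$ and from $\Hs{K}$, and small enough that $\bhyp{g}_j\not\subset\bhs{g}_{j+1}$. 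Then $\bhyp{g}_j$ has the point $J(b_j)$ in $\obhs{g}_{j+1}$ and some other point in $\comp{\obhs{g}_{j+1}}$, so $\{\hyp{g}_j,\hyp{g}_{j+1}\}$ supports a $(1,1)$ connector by Corollary \ref{cor: two states solution}; I fix a vertex $v_{j+1}$ on it. Since $\obhs{g}_{j+1}$ is an open neighbourhood of $J(b_j)$ it covers $J$ along an interval reaching past $b_j$, so $b_{j+1}>b_j$; iterating and then splicing in $\obhs{h}_1$ once the coverage reaches an interval containing $\zeta_1$, one gets the chain.

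The main obstacle is to make this process terminate in finitely many steps while keeping all halfspaces of diameter $<\delta$: the constraint $\bhyp{g}_j\not\subset\bhs{g}_{j+1}$ together with the requirement $\diam\bhs{g}_{j+1}<\delta$ could in principle force the halfspaces $\obhs{g}_{j+1}$ (hence the arcs of $J$ they cover) to collapse, especially near $\zeta_0$ or $\zeta_1$ if these lie on $\bhyp{h}$, so that $b_j$ converges to some value $<1$. I expect the remedy to use the cocompact $G$-action on $\CC{X}$: near any boundary point and at any sufficiently small scale there are translates of the finitely many ``model'' halfspaces, whose limit circles have comparable diameter; taking $\obhs{g}_{j+1}$ of this kind gives a uniform lower bound on $b_{j+1}-b_j$ in terms of $\delta$, so the chain is finite. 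Granting this, the concatenation $P$ of the $I_j\subset\bhyp{g}_j$ (with $v_0:=v_{\mathrm{start}}$ and $v_{N+1}:=v_{\mathrm{end}}$) is, after reparametrisation matching the monotone coverage of $J$, an $\epsilon$-approximating PH arc: each $I_j$ lies in $\bhs{g}_j$, a set of diameter $<\delta<\epsilon$ meeting $J$. Properties \ref{point1}--\ref{point3} of Definition \ref{def: PH curve} hold by construction; $\Hs{H}=\{\obhs{g}_j\}$ is a $\delta$-cover of $J$ disjoint from $\Hs{K}$ with exactly one segment of $P$ on each $\bhyp{g}_j$; the endpoints lie on $\bhyp{k}_0$ and $\bhyp{k}_1$; the first and last segments lie on $\bhyp{h}_0$ and $\bhyp{h}_1$, which share $(1,1)$ connectors with $\hyp{k}_0$ and $\hyp{k}_1$; and since every $\bhs{g}_j$ with $0<j<N$ is built around an interior point of $J$ and therefore avoids $\bhyp{h}$, the only segments of $P$ that can meet $\bhyp{h}$ are $I_0$ and $I_N$, i.e.\ those supported on $\bhyp{h}_0$ and $\bhyp{h}_1$.
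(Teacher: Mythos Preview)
Your strategy matches the paper's in broad outline---a chain of small halfspaces along $J$, with Corollary~\ref{cor: two states solution} supplying the $(1,1)$ connectors---but there is a genuine gap precisely where you flag it, and your proposed remedy does not close it. Even if cocompactness let you pick each $\obhs{g}_{j+1}$ with diameter in a fixed window $[\delta/C,\delta)$, that would \emph{not} bound $b_{j+1}-b_j$ below: $J(b_j)$ can lie arbitrarily close to $\bhyp{g}_{j+1}$, so $J^{-1}(\obhs{g}_{j+1})$ may be an arbitrarily short interval about $b_j$, regardless of the diameter of $\bhs{g}_{j+1}$. And in your scheme you cannot in general keep the diameters in such a window anyway: the constraint $\bhyp{g}_j\not\subset\bhs{g}_{j+1}$, which is what makes Corollary~\ref{cor: two states solution} fire, is secured in your argument by taking $\diam\bhs{g}_{j+1}<\diam\bhyp{g}_j$, and this genuinely cascades.

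The paper avoids both problems by decoupling the covering step from the connector step. It first fixes once and for all a \emph{finite} minimal $\delta'$-cover $\{\obhs{h}_0,\dots,\obhs{h}_n\}$ of $J$ (with $\delta'<\delta$ chosen so that the new halfspaces miss $\bhyp{h}$ and $\Hs{K}$), and extracts the chain purely combinatorially via the successor function $S(i)=j$ determined by $J(y_i)\in\obhs{h}_j$ with $y_j=\sup J^{-1}(\obhs{h}_j)$ maximal; since the index set is finite and $y_{S(i)}>y_i$, termination is automatic. Only \emph{afterwards} does it arrange the connectors: at each joint $v_i\in\bhyp{h}_{S^{i-1}(0)}\cap\bhyp{h}_{S^{i}(0)}$ it inserts a fresh auxiliary halfspace $\obhs{h}'_i$, taken small relative to both neighbours, so that Corollary~\ref{cor: two states solution} applies to the pairs $(\hyp{h}'_i,\hyp{h}_{S^{i-1}(0)})$ and $(\hyp{h}'_i,\hyp{h}_{S^{i}(0)})$. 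Because these auxiliary halfspaces are chosen after the main cover is fixed, their smallness never feeds back into the chain, and no shrinking occurs. Your treatment of the endpoints and of avoiding $\bhyp{h}$ and $\Hs{K}$ is correct; note also that your one-line $\epsilon$-approximation claim needs the uniform-continuity input that any sub-arc of $J$ between $\delta$-close points has diameter $<\epsilon/2$, which the paper builds into its choice of $\delta$ at the outset.
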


\begin{proof}

Let $0<\delta<\epsilon/3$ be such that for any two points $\zeta$ and $\xi$ on $J$ at distance less than $\delta$, the segment $[\zeta,\xi]_J$ has diameter less than $\epsilon/2$, and such that  for $i=0,1$ there exists a point on $\bhyp{k}_i$ at distance greater than $\delta$ from $\zeta_i$. 

Let $\obhs{h}_0$ and $\obhs{h}_1$ be open limit sets of halfspaces of diameter less than $\delta$ covering $\zeta_0$ and $\zeta_1$ respectively. Note that by definition of $\delta$. there are points of $\bhyp{k_i}$ in both sides of $\bhyp{h}_i$, which by Corollary \ref{cor: two states solution} insures that $\bhyp{h}_i$ and $\bhyp{k}_i$ share a $(1,1)$ connector.

Let $\delta'<\delta$ be such that $J\setminus \left( \obhs{h}_1 \cup \obhs{h}_1 \right)$ is at distance more that $\delta'$ from $\bhyp{h}$, $\zeta_0$ and $\zeta_1$, and such that no halfspace in $\Hs{K}$ has diameter less that $\delta'$.

Let $\Hs{H}' = \left\{\obhs{h}_2,\dots\obhs{h}_n\right\}$ be a minimal finite $\delta'$-cover of $J\setminus \left( \obhs{h}_1 \cup \obhs{h}_1 \right)$.
The $\Hs{H} = \left\{\obhs{h}_0,\dots\obhs{h}_n\right\}$ is a minimal finite $\delta$-cover of $J$ such that 
\begin{itemize}
\item the set $\Hs{H}$ disjoint from $\Hs{K}$,
\item the only limit set  containing $\zeta_0$ is $\obhs{h}_0$,
\item the only limit set containing $\zeta_1$ is $\obhs{h}_1$,
\item the only limit sets that may intersect $\bhyp{h}$ are $\obhs{h}_0$ and $\obhs{h}_1$.
\end{itemize}

Let $\delta''$ be smaller than the diameter of any $\bhyp{h_i}$.

For $i=0,\ldots n$ define $y_i = \sup J^{-1}(\obhs{h}_i) \in [0, 1]$.
We define a successor function $S$ as follow: $S(i)=j$ where $J(y_i) \in \obhs{h}_j$ and $y_j$ is maximal under this condition. 
Note that the function is define unless $i=1$ (and $y_i=1$), and that, when it is defined, $y_{S(i)} > y_i$.
These conditions insure that there exists $p$ such that $\zeta_2 \in \obhs{h}_{S^p(0)} (= \obhs{h}_1$).


Let $v_i$ be an intersection point of $\bhyp{h}_{S^{i-1}(0)}$ and $\bhyp{h}_{S^{i}(0)}$. 
For all $0<i\leq p$, 
take distinct halfspaces $\obhs{h}'_i$ covering $v_i$ and of diameter less than $\delta''$. 
The parameter $\delta''$ is small enough to insure that they do not belong to $\Hs{K}$, do not intersect $\bhyp{h}$ and do not contain $\zeta_0$ or $\zeta_1$.
Moreover by choice of $\delta''$, we can find points $\obhs{h}_{S^{i-1}(0)}$ (resp. $\obhs{h}_{S^{i}(0)}$) on both sides of $\bhyp{h}'_i$. Thus applying Corollary \ref{cor: two states solution}  $\bhyp{h}'_i$ shares $(1,1)$ connectors with both $\obhs{h}_{S^{i-1}(0)}$ and $\obhs{h}_{S^{i}(0)}$.
 Denote $v'_i$ and $v''_i$ intersections of $\bhyp{h}'_i$ with $\bhyp{h}_{S^{i-1}(0)}$ and $\bhyp{h}_{S^{i}(0)}$ respectively.

Let $v''_0$ (resp. $v'_{p+1}$) be an intersection point of $\bhyp{h}_0$ and $\bhyp{k}_0$ (resp. $\bhyp{h}_1$ and $\bhyp{k}_1$) which is closest to $\zeta_0$ (resp. $\zeta_1$). 

Let $I_i$  be one of the two intervals of $\bhyp{h}_{S^i(0)}$ with endpoints $v''_i$ and $v'_{i+1}$.
Let $I'_i$ be one of the two intervals of $\bhyp{h}'_i$ with endpoints $v'_i$ and $v''_i$.

Let $\eta$ be such that paths of $J$ parametrized by $[x-\eta, x + \eta]$ have length less than $\delta$ and that for all $i$ for which $S(i)$ is defined, we have $y_S(i)-y_{i}> 2\eta$.

We obtained a PH curve $P = (I_0, I'_1,I_1,\dots, I'_p, I_p)$ that we parametrize continuously such that $v'_{i}$ and $v''_{i}$ have parameter $y_{S^{i-1}(0)}-\eta$ and $y_{S^{i_1}(0)}+\eta$ respectively (with the convention that the parameters of $v''_0$ is $0$ and the one of $v'_{p+1}$ is 1).

It remains to show that the path $P$ is at distance $\epsilon$ from $J$ with respect to the sup metric. 
Points in $I_i$ and $J(y_{S^{i}(0)})$ are in  $\bhs{h}_{S^{i-1}(0)}$ and are thus at distance less than $\delta$.
The points $J(y_{S^{i-1}(0)})$ and $J(y_{S^{i}(0)})$ belong to $\obhs{h}_{S^i(0)}$ and thus are at a distance less than $\delta$, which by the definition of $\delta$ implies that the path $J([y_{S^{i-1}(0)},y_{S^{i}(0)}])$ connecting them on $J$ has diameter less than $\epsilon/2$, and thus also $J([y_{S^{i-1}(0)}+\eta,y_{S^{i}(0)}-\eta])$. 
Hence $I_i$ and $J([y_{S^{i-1}(0)}+\eta,y_{S^{i}(0)}-\eta])$ are at distance less than $\epsilon/2 + 3\delta< \epsilon$.

Similarly $I'_i$ is at distance less than $2\delta$ from $J(y_{S^{i-1}(0)})$, and so $J([y_{S^{i-1}(0)}-\eta,y_{S^{i-1}(0)}+\eta])$ are at distance less than $3\delta< \epsilon$.
This completes the proof that $P$ is an $\epsilon$-approximation of $J$ that satisfies the requirements of the lemma.
\end{proof}

In what follows we will denote by $[x,y]_{\hyp{h}}$ one of the two subsegments of the limit set of a hyperplane $\bhyp{h}$ that connect $x$ and $y$ (which are not necessarily distinct).

\begin{definition}
Given a path $J$ and a hyperplane $\bhyp{h}$, a \emph{$\delta$-bypass} of $x\in J\cap \bhyp{h}$ on $\bhyp{h}$ is a segment $[x,y]_{\hyp{h}}$ of $\bhyp{h}$ with $y\in J\cap \bhyp{h}$ such that $[x,y]_{\hyp{h}}$ can be partitioned to a finite union $\bigcup_{i=0}^{n-1} [x^i,x^{i+1}]_{\hyp{h}}$ (with $x^0=x, x^n=y$) of disjoint segments (except at their extremities) of diameter less than $\delta$ , with extremities in $J\cap \bhyp{h}$ and such that for every element $\zeta$ of $[x^i,x^{i+1})\cap J$, we have $\inv{\zeta} < \inv{x^{i+1}}$.
A $\delta$-bypass $[x,y]_{\hyp{h}}$ is \emph{maximal} if $y$ has maximal parameter amongst all $\delta$-bypasses of $x$ (i.e, if $[x,y']_{\hyp{h}}$ is another $\delta$-bypass then $J^{-1}(y')\le J^{-1}(y)$). 
A maximal bypass is \emph{degenerate} if $x=y$.
\end{definition}

\begin{remark}\label{remarkbypass}
\begin{enumerate}
\item \label{remark1} If $[x,y]_{\hyp{h}}$ is a $\delta$-bypass, then $\forall \zeta \in[x,y)\cap J,~\inv{\zeta}<\inv{y}$.
\item \label{remark2} If $[x,y]_{\hyp{h}}$ is a maximal $\delta$-bypass, then for any $z\in [x,y]_{\hyp{h}}\cap J$, the segment $[z,y]_{\hyp{h}}$ is a maximal $\delta$-bypass.
\item \label{remark3} There exists $\nu$ such that for all maximal $\delta$-bypass $[x,y]_{\hyp{h}}$, the segment of $J$ parametrized by $(\inv{y},\inv{y} + \nu)$ does not intersect $\bhyp{h}$.
\item \label{remark4} If two maximal $\delta$-bypasses $[x,y]_{\hyp{h}}$ and $[x',y']_{\hyp{h}}$ intersect, then $y=y'$. Indeed, otherwise we can suppose that $\inv{y'}<\inv{y}$. From point \ref{remark1}, the element $y$ cannot belong to $[x',y']_{\hyp{h}}$, so $[x,y]_{\hyp{h}} \not \subset [x',y']_{\hyp{h}}$. Therefore either $x'$ or $y'$ belongs to $[x,y]_{\hyp{h}}$, using point \ref{remark2}, we get a contradiction.
\item \label{remark5}For every $x\in J\cap \bhyp{h}$ there exists a maximal $\delta$-bypass. Indeed, the set of end points $y$ of  $\delta$-bypasses $[x,y]_{\hyp{h}}$ is closed.
\end{enumerate}
\end{remark}

\begin{definition}
A \emph{detour} of $\delta$-bypasses is a set of maximal bypasses $\left\{[x_i,y_i]_{\hyp{h}} \right\}$, such that for any two bypasses $[x_i,y_i]_{\hyp{h}}$ and $[x_j,y_j]_{\hyp{h}}$, either $\inv{y_i} < \inv{x_j}$ or $\inv{y_j}<\inv{x_i}$. 

A detour is \emph{covering}, if for any element $z\in J\cap \bhyp{h}$, there is one bypass $[x,y]_{\hyp{h}}$ of the detour, such that $\inv{x} \leq \inv{z} \leq \inv{y}$. Or equivalently, $J\cap\bhyp{h} \subseteq \bigcup _i [x_i,y_i]_J$.

From Remark \ref{remarkbypass} point \ref{remark4}, bypasses of a detour are disjoints.
\end{definition}

\begin{lemma}
There exists a finite covering detour.
\end{lemma}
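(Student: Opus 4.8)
The plan is to build a covering detour by a transfinite/greedy sweep along the parameter interval $[0,1]$ and then extract finiteness from compactness together with Remark \ref{remarkbypass}\eqref{remark3}. First I would fix a small $\delta>0$ and observe, using Remark \ref{remarkbypass}\eqref{remark5}, that every point of $J\cap\bhyp{h}$ has a maximal $\delta$-bypass. Start at $t_0=\inf J^{-1}(J\cap\bhyp{h})$; if this set is empty the empty detour works, so assume it is nonempty and note it is closed (preimage of a closed set under the continuous parametrization), hence the infimum is attained at some $x_0=J(t_0)\in\bhyp{h}$. Take a maximal $\delta$-bypass $[x_0,y_0]_{\hyp{h}}$. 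Having chosen $[x_{k},y_{k}]_{\hyp{h}}$, let $t_{k+1}=\inf\{\,t>J^{-1}(y_k) : J(t)\in\bhyp{h}\,\}$; again this set is closed, so if nonempty the infimum is attained at a point $x_{k+1}\in\bhyp{h}$ with $J^{-1}(x_{k+1})>J^{-1}(y_k)$, and we pick a maximal $\delta$-bypass $[x_{k+1},y_{k+1}]_{\hyp{h}}$. By construction $J^{-1}(y_k)<J^{-1}(x_{k+1})$ for all $k$, so the collection $\{[x_k,y_k]_{\hyp{h}}\}$ is a detour.

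Next I would check the covering property: by the choice of $x_{k+1}$ as the \emph{first} return of $J$ to $\bhyp{h}$ after parameter $J^{-1}(y_k)$, there is no point of $J\cap\bhyp{h}$ with parameter strictly between $J^{-1}(y_k)$ and $J^{-1}(x_{k+1})$; and every point of $J\cap\bhyp{h}$ with parameter $\le J^{-1}(y_k)$ lies in some $[x_j,y_j]_J$ with $j\le k$ — this is where Remark \ref{remarkbypass}\eqref{remark1} is used, to see that $[x_j,y_j]_{\hyp{h}}\cap J\subseteq[x_j,y_j]_J$, i.e. the bypass covers exactly the parameter window $[J^{-1}(x_j),J^{-1}(y_j)]$ of $J\cap\bhyp{h}$. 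So if the process terminates (i.e.\ the set defining $t_{k+1}$ becomes empty) after finitely many steps, the resulting finite detour is covering and we are done.

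The main obstacle — the step I expect to require the real argument — is ruling out an infinite sequence. Suppose the process does not terminate; then we get an increasing sequence $J^{-1}(x_0)\le J^{-1}(y_0)<J^{-1}(x_1)\le J^{-1}(y_1)<\cdots$ in $[0,1]$, converging to some $\tau$. By Remark \ref{remarkbypass}\eqref{remark3} there is a uniform $\nu>0$ such that for every maximal $\delta$-bypass $[x,y]_{\hyp{h}}$, the sub-arc $J((J^{-1}(y),J^{-1}(y)+\nu))$ misses $\bhyp{h}$; applying this to $[x_k,y_k]_{\hyp{h}}$ shows $J$ stays off $\bhyp{h}$ on $(J^{-1}(y_k),J^{-1}(y_k)+\nu)$, and since $x_{k+1}$ is the next return, $J^{-1}(x_{k+1})\ge J^{-1}(y_k)+\nu$. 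Hence $J^{-1}(x_{k+1})-J^{-1}(y_k)\ge\nu$ for all $k$, so the parameters $J^{-1}(x_k)$ increase by at least $\nu$ at each step, contradicting their boundedness by $1$. Therefore the process terminates and the resulting detour is finite and covering. (One subtlety to address carefully: Remark \ref{remarkbypass}\eqref{remark3} asserts the existence of such a $\nu$ uniformly over all maximal $\delta$-bypasses of the fixed $\hyp{h}$, which is exactly what the termination argument needs; I would cite it verbatim rather than re-prove it.)
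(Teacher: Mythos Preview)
Your proposal is correct and follows essentially the same greedy construction as the paper: iteratively take the point of $J\cap\bhyp{h}$ with smallest parameter not yet covered, attach a maximal $\delta$-bypass there via Remark \ref{remarkbypass}\eqref{remark5}, and use the uniform gap $\nu$ from Remark \ref{remarkbypass}\eqref{remark3} to force termination in finitely many steps. Your write-up is simply a more detailed unpacking of the paper's two-sentence proof; the only minor point to tidy is that the closedness of $\{t>J^{-1}(y_k):J(t)\in\bhyp{h}\}$ (and hence attainment of its infimum strictly above $J^{-1}(y_k)$) already implicitly uses Remark \ref{remarkbypass}\eqref{remark3}, so you may as well invoke it up front rather than only in the finiteness step.
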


\begin{proof}
We construct the detour by induction: take the element $x$ of smallest parameter that is not covered by the detour, and by Remark \ref{remarkbypass} point \ref{remark5} add a maximal bypass of $x$. Remark \ref{remarkbypass} point \ref{remark3} implies that the process finishes in a finite number of steps. 
\end{proof}

%

\begin{lemma}\label{lem: existence of PH approximation for arc with h}
Let $J$ be an arc such the two endpoints $\zeta_0$ and $\zeta_1$ of $J$ are on the boundaries of hyperplanes $\bhyp{k}_0$ and $\bhyp{k}_1$ respectively. Let $\epsilon>0$ and let $\hyp{h}$ be a hyperplane. There exists an $\epsilon$-approximating PH arc which satisfies the following two  conditions:
\begin{itemize}
\item the hyperplanes containing the first and last segments of the PH arc share a $(1,1)$ connector with respectively $\hyp{k}_0$ and $\hyp{k}_1$,
\item any intersection of $P$ with $\bhyp{h}$ is along a segment of $P\cap\bhyp{h}$.
\end{itemize}
\end{lemma}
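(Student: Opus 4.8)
The plan is to cut $J$ into two kinds of pieces along a finite covering detour of $\delta$-bypasses on $\bhyp{h}$, to replace the pieces that wander in and out of $\bhyp{h}$ by honest sub-segments of $\bhyp{h}$, and to approximate the remaining pieces---which miss $\bhyp{h}$---by PH arcs using Lemma~\ref{lem: existence of PH approximation of arc}, feeding $\bhyp{h}$ itself as the prescribed endpoint hyperplane at every cut so that everything glues back together into a single PH arc. Concretely, I would fix $\epsilon>0$ and then a small $\delta>0$---smaller than the diameters of the hyperplanes involved, smaller than the distance from $\zeta_i$ to $\bhyp{h}$ whenever $\zeta_i\notin\bhyp{h}$, and, by uniform continuity of $J$, small enough that every piece produced below is $\epsilon/3$-close to $J$---and take a finite covering detour $\{[x_i,y_i]_{\hyp{h}}\}_{i=1}^m$, ordered so that $\inv{x_1}\le\inv{y_1}<\inv{x_2}\le\cdots<\inv{y_m}$. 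If $J\cap\bhyp{h}=\emptyset$ the detour is empty and the statement is immediate from Lemma~\ref{lem: existence of PH approximation of arc}. Otherwise, since the detour is covering we have $J\cap\bhyp{h}\subseteq\bigcup_i[x_i,y_i]_J$, so the complementary sub-arcs $J_0=[\zeta_0,x_1]_J$, $J_i=[y_i,x_{i+1}]_J$ for $1\le i<m$, and $J_m=[y_m,\zeta_1]_J$ meet $\bhyp{h}$ only at their endpoints, while each $S_i:=[x_i,y_i]_{\hyp{h}}\subseteq\bhyp{h}$ is $\epsilon$-close to $[x_i,y_i]_J$ (same endpoints, and every point of either lies within $\delta$ of a partition point in $J\cap\bhyp{h}$) and the $S_i$ are pairwise disjoint, distinct bypasses of a detour being disjoint.

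Assume first that $\zeta_0,\zeta_1\notin\bhyp{h}$. I would apply Lemma~\ref{lem: existence of PH approximation of arc} to each $J_i$, taking the hyperplane called $\hyp{h}$ there to be the present $\hyp{h}$ (which misses the interior of $J_i$) and taking the endpoint hyperplanes to be $\hyp{k}_0,\hyp{h}$ for $J_0$, then $\hyp{h},\hyp{h}$ for $J_i$ with $1\le i<m$ (legitimate since $y_i,x_{i+1}\in\bhyp{h}$), then $\hyp{h},\hyp{k}_1$ for $J_m$. This yields PH arcs $P_i$, each $\epsilon/3$-close to $J_i$, whose first and last segments lie on hyperplanes sharing a $(1,1)$ connector with the prescribed endpoint hyperplanes and which meet $\bhyp{h}$ only on those two extreme segments; I would moreover choose each inward-facing extreme segment of a $P_i$ (the one to be joined to a neighbouring $S_j$) so that it leaves $\bhyp{h}$ immediately after the joining point, which is possible because a hyperplane sharing a $(1,1)$ connector with $\hyp{h}$ meets $\bhyp{h}$ in a nowhere dense subset of its circle, while the outward-facing extreme segments of $P_0$ and $P_m$ already avoid $\bhyp{h}$ as they sit in a $\delta$-small halfspace around $\zeta_0$ or $\zeta_1$. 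Then $P:=P_0\,S_1\,P_1\,S_2\cdots S_m\,P_m$, reparametrised continuously to track $J$ exactly as in the previous lemma, is the desired $\epsilon$-approximating PH arc: internal junctions of each $P_i$ are already of the required form, and each junction of an $S_j$ with an abutting extreme segment is a point of $\bhyp{h}$ on a hyperplane sharing a $(1,1)$ connector with $\hyp{h}$, so the connector condition of Definition~\ref{def: PH curve} holds; the disjointness condition holds because the only segments of $P$ lying on $\bhyp{h}$ are the pairwise disjoint $S_j$ and on any other hyperplane all segments of $P$ come from a single $P_i$; the first and last segments of $P$ lie on hyperplanes sharing $(1,1)$ connectors with $\hyp{k}_0$ and $\hyp{k}_1$; and $P\cap\bhyp{h}=\bigcup_j S_j$ (plus finitely many junction points, each an endpoint of some $S_j$) is a finite union of sub-segments of $P$.

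The hard part will be the case $\zeta_0\in\bhyp{h}$ (and, symmetrically, $\zeta_1\in\bhyp{h}$): then covering forces $x_1=\zeta_0$, the piece $J_0$ degenerates, and $P$ would begin with $S_1\subseteq\bhyp{h}$, whose supporting hyperplane need not share a $(1,1)$ connector with $\hyp{k}_0$. I expect to fix this by prepending a short bridge segment: choose a halfspace $\hs{h}_0$ with $\zeta_0\in\obhs{h}_0$ of diameter $<\delta$, small enough that $\bhyp{k}_0$ and $\bhyp{h}$ each contain a point lying outside $\bhs{h}_0$. Since $\zeta_0\in\bhyp{k}_0\cap\bhyp{h}\cap\obhs{h}_0$, both $\bhyp{k}_0$ and $\bhyp{h}$ then have points on either side of $\bhyp{h}_0$, so Corollary~\ref{cor: two states solution} makes $\hyp{h}_0$ share a $(1,1)$ connector with each of $\hyp{k}_0$ and $\hyp{h}$; replacing the $\zeta_0$-end of $S_1$ by a short sub-arc of $\bhyp{h}_0$ running from near $\zeta_0$ to a point of $\bhyp{h}_0\cap\bhyp{h}$ close to $\zeta_0$ then puts the first segment of $P$ on $\hyp{h}_0$ (compatible with $\hyp{k}_0$), keeps the new junction with $S_1$ legitimate, and preserves $\epsilon$-closeness. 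The symmetric modification at $\zeta_1$ completes the argument.
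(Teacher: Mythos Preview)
Your approach is the same as the paper's: cut $J$ along a finite covering detour of $\delta$-bypasses on $\bhyp{h}$, approximate each complementary sub-arc $J_i$ by a PH arc $P_i$ via Lemma~\ref{lem: existence of PH approximation of arc} (with $\hyp{h}$ as the prescribed endpoint hyperplane at every internal cut), and bridge consecutive $P_i$'s by segments of $\bhyp{h}$. There are, however, two genuine gaps in your concatenation step.

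First, the path $P_0\,S_1\,P_1\,S_2\cdots S_m\,P_m$ is not continuous as written. Lemma~\ref{lem: existence of PH approximation of arc} only guarantees that the endpoints of $P_i$ lie \emph{on} $\bhyp{h}$, at intersection points of $\bhyp{h}$ with the small covering hyperplanes $\hyp{h}_0,\hyp{h}_1$; these points are close to $y_i$ and $x_{i+1}$ but are not equal to them. So your fixed segments $S_j=[x_j,y_j]_{\hyp{h}}$ do not abut the $P_i$'s. The paper repairs this by discarding the literal bypasses and instead using the nearby $\bhyp{h}$-segments $[\xi_i,\zeta_{i+1}]_{\hyp{h}}$ whose endpoints are the \emph{actual} endpoints of the adjacent $P$'s; an auxiliary constant $\delta_2$ separating the bypasses by at least $2\delta_2$ is introduced precisely so that these perturbed segments remain pairwise disjoint and still $\epsilon$-approximate the corresponding sub-arcs of $J$.

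Second, your disjointness claim ``on any other hyperplane all segments of $P$ come from a single $P_i$'' is unjustified: nothing stops two different $P_i$'s from being supported on a common hyperplane, which would violate condition~(\ref{point2}) of Definition~\ref{def: PH curve}. The paper handles this via the finite exclusion set $\Hs{K}$ in Lemma~\ref{lem: existence of PH approximation of arc}, constructing the $P_i$'s one at a time and adding all previously used hyperplanes to $\Hs{K}$ before building the next one. Both issues are easily fixed along these lines, and your treatment of the degenerate case $\zeta_0\in\bhyp{h}$ is in fact more explicit than the paper's.
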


\begin{proof}
Let $\delta_1 < \epsilon/9$ such that for any two points $\zeta$ and $\xi$ on $J$ at distance less than $\delta_1$ the segment $[\zeta,\xi]_J$ has diameter less than  $\epsilon/3$.
This insure that a $\delta_1$-bypass $[x,y]_{\hyp{h}}$ is an $\epsilon/3+\delta_1$-approximation of the segment $[x,y]_J$ (with the natural parametrization of $[x,y]_{\hyp{h}}$ by $[\inv{x},\inv{y}]$ which agrees with $J$ on the endpoints $x^i\in J\cap \bhyp{h}$ of its subdivision as it appears in the definition of a $\delta$-bypass). 
Indeed, $[x,y]_{\hyp{h}}$ is a union of segments $[x^i,x^{i+1}]_{\hyp{h}}$ of diameter less than $\delta_1$ and with extremities on $J$  with increasing  parameter. 
By the choice of $\delta_1$, the paths $[x^i,x^{i+1}]_J$ have diameter less than $\epsilon/3$. 
Since the parameters are increasing, the segments $[x^i,x^{i+1}]_J$ on $J$ are disjoint (except the extremities), and each $[x^i,x^{i+1}]_{\hyp{h}}$ is an $\epsilon/3+\delta_1$-approximation of $[x^i,x^{i+1}]_J$.

Let $\mathcal D = \{[x_1,y_1]_{\hyp{h}},\ldots,[x_n,y_n]_{\hyp{h}}\}$ be a finite covering detour of $\delta_1$-bypasses, ordered by their parameter along the curve $J$.
Let $\delta_2<\delta_1$ be such that each bypass is at distance at least $2\delta_2$ from any other.
Let $\delta_3<\delta_2$ be such that each pair of points $\zeta$ and $\xi$ on $\bhyp{h}$ at distance less than $\delta_3$ has a path from $\zeta$ to $\xi$ on $\bhyp{h}$ with diameter less than  $\delta_2$.

Let $J_i$ be the segment $[y_i,x_{i+1}]_J$ on $J$ for all $i=0,\ldots,n$ (where $y_0$ and $x_{n+1}$ are the two endpoints of $J$,  $\zeta_0$ and $\zeta_1$, respectively). 
Each of the endpoints of $J_i$ are on $\bhyp{h}$ except maybe for $J_0$ and $J_n$ which are on $\bhyp{k}_0$ and $\bhyp{k}_1$. 

Let $\delta < \delta_3$ be obtained from Lemma \ref{lem: existence of PH approximation of arc} for all the $J_i$.
Take $\obhs{h}_{i,x}$ and $\obhs{h}_{i,y}$ covers of respectively $x_i$ and $y_i$ of diameter less than $\delta$, such that the limit sets of their bounding hyperplanes are all disjoints. 
It may happen that $x_i=y_i$ if a bypass is degenerate, but by taking $\obhs{h}_{i,y}$ small enough, we can ensure that its boundary does not intersect that of $\obhs{h}_{i,x}$.
Since $\mathcal D$ is covering, the interior of each $J_i$ does not intersect $\bhyp{h}$. 

Using Lemma \ref{lem: existence of PH approximation of arc}, we have $\epsilon$-approximation $P_i$ for $J_i$ stating and ending with segments on $\obhs{h}_{i,y}$ and $\obhs{h}_{i+1,x}$. We can also insure that the $\epsilon$-approximation $P_i$ are supported on distinct hyperplanes. Indeed we can construct the $P_i$'s one after the other and add all the used hyperplanes to the set $\Hs{K}$ of Lemma \ref{lem: existence of PH approximation of arc}.

Let $\zeta_i$ and $\xi_i$ be the starting and ending points of $P_i$. By construction, the point $\zeta_i$ (resp. $\xi_i$) is at distance less than $\delta$ from $y_{i-1}$ (resp. $x_i$). Thus there is a segment $[\xi_i,\zeta_{i+1}]_{\hyp{h}}$ on $\bhyp{h}$ at distance less than $\delta_2$ from
 $[x_i, y_{i}]_{\hyp{h}}$, and by the choice of $\delta_2$ these paths are disjoint and not reduced to a point (since $\xi_i\neq\zeta_{i+1}$). Thus $[\xi_i,\zeta_{i+1}]_{\hyp{h}}$ is a $\epsilon/3+\delta_1 + \delta_2<\epsilon$-approximation of $[x_i,y_i]_{\hyp{h}}$. By concatenating alternatively the $P_i$ and the $[\xi_i,\zeta_{i+1}]_{\hyp{h}}$, we obtain an $\epsilon$-approximating PH arc of $J$ satisfying the required properties.
\end{proof}

\begin{lemma}\label{lem: existence of PH approximation}
Let $J$ be Jordan curve, let $\epsilon>0$, and let $\hyp{h}$ be a hyperplane. 
Then, there exists an $\epsilon$-approximating PH curve such that any intersection of $P$ with $\bhyp{h}$ is along a segment of $P$.
\end{lemma}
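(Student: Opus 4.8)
The plan is to reduce to the arc case established in Lemma \ref{lem: existence of PH approximation for arc with h}: cut $J$ at two well-chosen points, $\epsilon$-approximate the two resulting arcs by PH arcs, and splice these two PH arcs back together along two short connecting segments. We may assume $J\neq\bhyp{h}$ (the case $J=\bhyp{h}$, where one perturbs $J$ slightly off $\bhyp{h}$, is easy and omitted). Since a proper closed connected subset of a circle is not a circle, $J\not\subseteq\bhyp{h}$, so $J\setminus\bhyp{h}$ is a non-empty open sub-arc of $J$; fix a connected component $U$ of it. The union $\bigcup_{\hyp{k}\in\Hyp{H}}\bhyp{k}$ is dense in $\bdry\CC{X}$ --- immediate from the proof of Lemma \ref{basis for topology}, since around any boundary point there are limit circles of halfspace hyperplanes of arbitrarily small diameter --- so I would choose two distinct points $\zeta_0,\zeta_1\in U$ lying on the limit circles of two \emph{distinct} hyperplanes $\hyp{k}_0,\hyp{k}_1$, noting $\hyp{k}_i\neq\hyp{h}$ because $\zeta_i\notin\bhyp{h}$. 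Cutting $J$ at $\zeta_0,\zeta_1$, one of the two resulting arcs, say $J_2$, lies inside $U$, so $J_2\cap\bhyp{h}=\emptyset$ while the complementary arc $J_1$ (also with endpoints $\zeta_0,\zeta_1$) contains all of $J\cap\bhyp{h}$. Finally, since $\zeta_i\notin\bhyp{h}$ and $\bhyp{k}_i\cap\bhyp{h}$ is closed, I would fix an open sub-arc $\sigma_i\subset\bhyp{k}_i$ around $\zeta_i$ with $\sigma_i\cap\bhyp{h}=\emptyset$, together with $\rho>0$ so that any two points of $\bhyp{k}_i$ within $\rho$ of $\zeta_i$ are joined inside $\sigma_i$ by a sub-arc of diameter $<\epsilon/2$.

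Then I would apply Lemma \ref{lem: existence of PH approximation for arc with h} to $J_1$ (endpoint-hyperplanes $\hyp{k}_0,\hyp{k}_1$) and Lemma \ref{lem: existence of PH approximation of arc} to $J_2$ (whose closure misses $\bhyp{h}$, so its hypothesis holds), both with the hyperplane $\hyp{h}$ and parameter $\epsilon'<\min\{\epsilon/2,\rho\}$, obtaining $\epsilon'$-approximating PH arcs $P_1,P_2$ such that: (i) the hyperplanes carrying the first and last segments of $P_1$ (resp. $P_2$) share a $(1,1)$ connector with $\hyp{k}_0$ and $\hyp{k}_1$ (resp. $\hyp{k}_1$ and $\hyp{k}_0$); (ii) every intersection of $P_i$ with $\bhyp{h}$ lies along a segment of $P_i$, and in fact $P_2\cap\bhyp{h}=\emptyset$ when the halfspaces used for $J_2$ in Lemma \ref{lem: existence of PH approximation of arc} are taken of diameter $<d(\zeta_i,\bhyp{h})$; (iii) the endpoints of $P_1$ lie on $\bhyp{k}_0,\bhyp{k}_1$ and those of $P_2$ on $\bhyp{k}_1,\bhyp{k}_0$ --- for $P_2$ this is part of the statement of Lemma \ref{lem: existence of PH approximation of arc}, and for $P_1$ it is implicit in the proof of Lemma \ref{lem: existence of PH approximation for arc with h}, which begins with a Lemma \ref{lem: existence of PH approximation of arc}-approximation whose first endpoint lies on $\bhyp{k}_0$. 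Constructing $P_1$ before $P_2$ and adding the hyperplanes used by $P_1$ together with $\hyp{k}_0,\hyp{k}_1$ to the excluded set $\Hs{K}$, I would also arrange that $P_1,P_2$ have no common supporting hyperplane and that neither uses $\hyp{k}_0$ or $\hyp{k}_1$.

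Now I would splice. Let $c_1\subset\sigma_1$ be the sub-arc of $\bhyp{k}_1$ joining the terminal point of $P_1$ to the initial point of $P_2$ (both within $\epsilon'<\rho$ of $\zeta_1$), and likewise $c_0\subset\sigma_0\subset\bhyp{k}_0$ joining the terminal point of $P_2$ to the initial point of $P_1$; both have diameter $<\epsilon/2$ and miss $\bhyp{h}$. Set $P:=P_1*c_1*P_2*c_0$, reparametrized over $\mathbb{S}^1$ so that $P_1,P_2$ follow the parametrizations of $J_1,J_2$ outside two tiny windows around the parameters of $\zeta_0,\zeta_1$ in which $c_0,c_1$ are traversed. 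One then verifies Definition \ref{def: PH curve}: (1) is clear; (2) holds because $P_1,P_2$ share no supporting hyperplane, neither uses $\hyp{k}_0$ or $\hyp{k}_1$, and only $P_1$ meets $\bhyp{h}$; (3) holds at the four new vertices because each such meeting point is an endpoint of some $P_i$, hence lies on the relevant $\bhyp{k}_j$ and on the first or last supporting hyperplane of $P_i$, and this pair carries a $(1,1)$ connector by (i). Since $c_0,c_1$ and $P_2$ all miss $\bhyp{h}$, we get $P\cap\bhyp{h}=P_1\cap\bhyp{h}$, so every intersection of $P$ with $\bhyp{h}$ lies along a segment. That $P$ is an $\epsilon$-approximation of $J$ is the usual reparametrization estimate: on the bulk of the domain $P$ is within $\epsilon'<\epsilon/2$ of the correspondingly parametrized point of $J$, up to an extra $<\epsilon/2$ from the reparametrization (controlled by uniform continuity of $J$), and on the connector windows $P$ stays $\epsilon/2$-close to $\zeta_0$ or $\zeta_1$, as does $J$.

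The substance here, as in the preceding approximation lemmas, is organizational rather than conceptual. The delicate point is to choose the two cut points on hyperplane limit circles and \emph{off} $\bhyp{h}$, with all of $J\cap\bhyp{h}$ confined to one sub-arc --- otherwise the property ``every intersection with $\bhyp{h}$ lies along a segment'' would be endangered at the splice --- while simultaneously retaining enough control on the approximating arcs (endpoints landing on $\bhyp{k}_0,\bhyp{k}_1$, consecutive supporting hyperplanes carrying $(1,1)$ connectors, and $P_1,P_2$ using disjoint families of hyperplanes) to assemble them into a single PH curve through the two connectors $c_0,c_1$. The one ingredient that is not entirely formal is extracting the endpoint information (iii) from the proof of Lemma \ref{lem: existence of PH approximation for arc with h}.
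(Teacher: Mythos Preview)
Your approach is correct in outline but takes a more complicated route than the paper. The paper exploits a single small halfspace: pick $\xi\in J\setminus\bhyp{h}$ and a halfspace $\hs{k}$ with $\xi\in\obhs{k}$, $\diam(\bhs{k})$ small, and $\bhyp{k}\cap\bhyp{h}=\emptyset$. Then \emph{both} cut points $\zeta_0,\zeta_1$ lie on the \emph{same} circle $\bhyp{k}$, the short sub-arc $J_0\subset\bhs{k}$ is $\epsilon$-approximated by a single segment of $\bhyp{k}$, and only one application of Lemma~\ref{lem: existence of PH approximation for arc with h} (to the long arc $J_1$, with $\hyp{k}_0=\hyp{k}_1=\hyp{k}$) is needed; the PH arc is then closed up by that one segment of $\bhyp{k}$. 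This avoids your second arc-lemma, the two distinct auxiliary hyperplanes $\hyp{k}_0,\hyp{k}_1$, and the two splicing segments $c_0,c_1$. Note that the paper, like you, silently uses that the endpoints of the Lemma~\ref{lem: existence of PH approximation for arc with h} output land on $\bhyp{k}$, which is only visible from the proof of that lemma.

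Two small technical points in your write-up deserve tightening. First, density of $\bigcup_{\hyp{k}}\bhyp{k}$ in $\bdry\CC{X}$ does \emph{not} by itself produce points of the one-dimensional set $U\subset J$ on hyperplane boundaries; you should instead take a small halfspace $\obhs{k}$ around a point of $U$ (Lemma~\ref{basis for topology}) and observe that $J$, having larger diameter, must cross $\bhyp{k}$ inside $U$. Second, you arrange via $\Hs{K}$ that $P_2$ avoids $\hyp{k}_0,\hyp{k}_1$ and the hyperplanes of $P_1$, but Lemma~\ref{lem: existence of PH approximation for arc with h} as stated has no $\Hs{K}$ parameter, so your claim that $P_1$ also avoids $\hyp{k}_0,\hyp{k}_1$ needs either a modification of that lemma (its proof does allow it, since all internal calls are to Lemma~\ref{lem: existence of PH approximation of arc}) or an a posteriori argument. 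The paper's one-hyperplane trick sidesteps both of these bookkeeping issues.
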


\begin{proof}
Up to taking $\epsilon$ small enough we can assume that the diameter of $J$ is larger than $\epsilon$.

To approximate a curve, we do the following: 
if $J = \bhyp{h}$, then the approximation is $\bhyp{h}$, otherwise there is a point $\xi \in J \setminus \bhyp{h}$. 
Let $\eta<\epsilon/2$ be such that for any two points $\zeta$ and $\xi$ on $J$ at distance less than $\eta$, one of the segments of $J$ with extremities $\zeta$ and $\xi$ has diameter less than $\epsilon/2$.

Take a halfspace $\hs{k}$ such that $\obhs{k}$ contains $\xi$, it has diameter less than $\eta$ and such that $\bhyp{k}$ does not intersect $\bhyp{h}$. This is possible since $\xi$ is not on $\bhyp{h}$. Let $\zeta_0$ and $\zeta_1$ be to points of $J\cap \bhyp{k}$. Let $J_0$ be the path between $\zeta_0$ and $\zeta_1$ of diameter less than $\epsilon/2$, and let $J_1$ be the complementary path. Notice that any segment of $\bhyp{k}$ is an $\epsilon$-approximation of $J_0$.
Applying Lemma \ref{lem: existence of PH approximation for arc with h} to $J_1$ (and the hyperplanes $\hs{k}_1=\hs{k}_2=\hs{k}$), let $P_1$ be a $\delta$-approximation of $J_1$ with $0<\delta<\epsilon$ which is less than the diameter of $\hs{k}$. 
The approximation $P_1$ of $J_1$ begins and ends on $\hs{k}$ and thus can be closed by a segment to form an approximation of $J$. Notice that \ref{point2} of Definition \ref{def: PH curve} is insure by the fact that $\hs{k}$ cannot be used in $P_1$, the point \ref{point3}  follows from Lemma \ref{lem: existence of PH approximation for arc with h}. The condition on intersection with $\bhyp{h}$ follows from conclusion of Lemma \ref{lem: existence of PH approximation for arc with h} and the fact that $\bhyp{k}$ does not intersect $\bhyp{h}$.
\end{proof}

\section{The parity function of a Jordan curve} 
\label{sec: the parity of a Jordan curve}

\subsection{The parity function of a PH curve}
\label{subsec: the parity of a PH curve}
Let $P$ be a PH curve. We assign a parity function to $P$ in the following way.
Let $\delta>0$ be smaller than the half the minimal distance between the vertices of $P$. 
For each vertex $\xi$ of $P$ which is supported on the two hyperplanes $\hyp{h}$ and $\hyp{k}$, let $C$ be a type $(1,1)$ connector which is small enough such that in each of $\bhyp{h},\bhyp{k}$ the connector is contained in an interval which stays in the $\delta$-neighborhood of $\xi$ (one can find such a connector using the dynamics of $\Stab_G (\hyp{h}\cap\hyp{k})$ on $\bhyp{h}\cap\bhyp{k}$). 
Let $\grid{G}$ be the grid consisting of the hyperplanes on which $P$ is supported and the connectors which we assigned to each vertex of $P$. 
We choose the orientation such that outside of the $\delta$-neighborhoods of the vertices of $P$, $P=\bdry\grid{G}$ (one can do so, since each connector is of type $(1,1)$ and contained in an interval which stays in the $\delta$-neighborhood of the corresponding vertex).

\begin{lemma}\label{lem: parity of PH curve}
For $\epsilon>0$ small enough, there exists $\delta>0$ such that outside an $\epsilon$-neighborhood of the vertices of $P$ and outside $P$, the parity function of $\grid{G}$ (associated to $\delta$) does not depend on the choice of connectors (up to constants).
\end{lemma}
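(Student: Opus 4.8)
The plan is to compare two choices of connectors via the difference of the corresponding grids, reducing the statement to Lemma~\ref{lem: parity extends to boundary} applied to a grid whose boundary is confined near $P$. Fix the PH curve $P$ with vertices $\xi_1,\dots,\xi_N$, and let $\epsilon_0>0$ be small enough that the $\epsilon_0$-balls around the $\xi_i$ are pairwise disjoint and small enough for the construction in Section~\ref{subsec: the parity of a PH curve} to run. For $0<\epsilon<\epsilon_0$ take $0<\delta\le\epsilon$ small enough that this construction is available with this $\delta$, and write $N_\delta$, $N_\epsilon$ for the corresponding neighbourhoods of $\{\xi_1,\dots,\xi_N\}$. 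Given two admissible choices of (small) connectors, we obtain oriented grids $\grid{G},\grid{G}'$ with the same hyperplane collection $\gridy{H}$ (the hyperplanes carrying the segments of $P$), all connectors contained in $N_\delta$, and orientations chosen so that $\bdry\grid{G}=\bdry\grid{G}'=P$ outside $N_\delta$; in particular $\bdry\grid{G},\bdry\grid{G}'\subseteq P\cup N_\delta$.

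Next I would pass to a common refinement of the two connector families: at each vertex $\xi$, with connectors $C\in\gridy{C}$, $C'\in\gridy{C}'$ both supported on the same pair $\hyp{h},\hyp{k}$, partition them using the clopen sets $C\cap C'$, $C\setminus C'$, $C'\setminus C$ (discarding empty ones). By Remark~\ref{rmk: can partition connectors} this changes neither grid nor the validity of its orientation, and now the connector families consist of pairwise disjoint-or-identical connectors, still contained in $N_\delta$. Lemma~\ref{addition of grids} then produces an oriented grid $\grid{G}''=\grid{G}+\grid{G}'=(\gridy{H},\gridy{C}\triangle\gridy{C}')$ with $\alpha''_{\bhyp{h}}=\alpha_{\bhyp{h}}+\alpha'_{\bhyp{h}}$ wherever both are defined, and with parity function $\pi''=\pi+\pi'=\pi-\pi'$ over $\Z/2$, where $\pi,\pi'$ denote the parity functions of $\grid{G},\grid{G}'$.

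The key point, which I would verify directly from the definition of $\bdry\grid{G}$, is that $\bdry\grid{G}''\subseteq\bdry\grid{G}\cup\bdry\grid{G}'$: if $\alpha''_{\bhyp{h}}(\zeta)=1$ then $\alpha_{\bhyp{h}}(\zeta)$ and $\alpha'_{\bhyp{h}}(\zeta)$ are not both defined and equal, so $\zeta$ lies in a connector of one of the two grids or satisfies $\alpha_{\bhyp{h}}(\zeta)=1$ or $\alpha'_{\bhyp{h}}(\zeta)=1$; and every connector of $\grid{G}''$ is among the refined connectors of $\grid{G}$ and $\grid{G}'$. Hence $\bdry\grid{G}''\subseteq P\cup N_\delta\subseteq P\cup N_\epsilon$. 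By Lemma~\ref{lem: parity extends to boundary}, $\pi''$ extends to $\bdry\CC{X}\setminus\bdry\grid{G}''$ and is constant on the connected components there; since $\bdry\CC{X}\setminus(P\cup N_\epsilon)\subseteq\bdry\CC{X}\setminus\bdry\grid{G}''$, the difference $\pi-\pi'=\pi''$ is locally constant on $\bdry\CC{X}\setminus(P\cup N_\epsilon)$. This is precisely the assertion that, outside $P$ and an $\epsilon$-neighbourhood of the vertices, the parity function of the grid associated to $P$ and $\delta$ is independent of the choice of connectors up to a (locally constant) $\Z/2$-valued function.

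The step I expect to require the most care is purely bookkeeping: checking that the common refinement of the two connector families produces genuine connectors (clopen, proper, pairwise disjoint-or-identical, still inside $N_\delta$) so that Lemma~\ref{addition of grids} applies, and checking that the orientations produced by the PH-curve construction are indeed forced --- as characteristic functions of the traversed hyperplane segments --- away from $N_\delta$, which is what gives $\bdry\grid{G},\bdry\grid{G}'\subseteq P\cup N_\delta$ for \emph{every} admissible choice of connectors. There is no topological difficulty: in particular the No Blob Lemma is not needed here, since we only claim constancy up to a locally constant function and do not assert that $\bdry\CC{X}\setminus(P\cup N_\epsilon)$ is connected.
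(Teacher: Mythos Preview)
There is a genuine gap. The lemma asserts that the parity is independent of the connectors \emph{up to a global constant}, not up to a locally constant function; this is exactly what the Corollary following it needs in order to produce a single well-defined function $\Delta\pi$ on all of $(\bdry\CC{X}\setminus P)^2$. Your explicit weakening to ``up to a locally constant $\Z/2$-valued function'' does not suffice: if $\pi-\pi'$ is only locally constant on $\bdry\CC{X}\setminus(P\cup N_\epsilon)$, then $\Delta\pi$ and $\Delta\pi'$ can disagree on pairs lying in different components, and no limit parity for $P$ is defined. Since the whole point is eventually to show that certain PH curves have parity taking \emph{both} values, losing control of $\Delta\pi$ across components is fatal.

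The repair is short and uses ingredients you already set up. Your bound $\bdry\grid{G}''\subseteq\bdry\grid{G}\cup\bdry\grid{G}'$ is true but throws away the cancellation you arranged: since both orientations are forced to equal the characteristic function of the $P$-segment on each $\bhyp{h}$ outside $N_\delta$, you have $\alpha_{\bhyp{h}}=\alpha'_{\bhyp{h}}$ there, hence $\alpha''_{\bhyp{h}}=0$ on $\bhyp{h}\setminus N_\delta$; together with the connectors of $\grid{G}''$ lying in $N_\delta$, this gives the sharper containment $\bdry\grid{G}''\subseteq N_\delta$, not merely $\subseteq P\cup N_\delta$. At this point you \emph{do} need the connectivity input you tried to avoid: by Corollary~\ref{cor: no local cutpoints} (together with local path-connectedness), for every $\epsilon>0$ there is $\delta>0$ so that any two points outside $N_\epsilon$ are joined by a path outside $N_\delta\supseteq\bdry\grid{G}''$, and then Lemma~\ref{lem: parity extends to boundary} forces $\pi''=\pi-\pi'$ to be a genuine constant on $\bdry\CC{X}\setminus N_\epsilon$, hence on $\bdry\CC{X}\setminus(P\cup N_\epsilon)$.

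For comparison, the paper's own proof does not pass through the sum grid $\grid{G}''$ at all: it uses the same connectivity fact to produce a path outside $N_\delta$ between any two given points of $\bdry\CC{X}\setminus(P\cup N_\epsilon)$, and then applies Corollary~\ref{prop: computing the parity} along that path, observing that the resulting formula for $\Delta\pi$ involves only the hyperplanes and the $P$-segments on them, not the connectors. Your addition-of-grids approach (once repaired as above) is essentially the method the paper deploys later in Lemma~\ref{lem: parity of Jordan curve}; either route ultimately rests on the same no-local-cutpoint connectivity statement, so that step is not avoidable.
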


\begin{proof}
By Corollary \ref{cor: no local cutpoints}, for any $\epsilon>0$ there exists $\delta>0$ small enough such that any two points in $\bdry\CC{X}$ outside the $\epsilon$-neighborhood of the vertices of $P$ can be connected with a path that stays outside the $\delta$-neighborhood of the vertices of $P$.
We note that this path satisfies the conditions of Proposition \ref{prop: computing the parity} and since it does not enter the $\delta$-neighborhood of the vertices of $P$ it does not depend on the choice of connectors.
\end{proof}

We deduce the following
\begin{corollary}
For a PH curve $P$ the parity $\Delta\pi$ of the grid constructed above for $\delta$ has a limit as $\delta\to 0$ which is defined outside $P$.\qed
\end{corollary}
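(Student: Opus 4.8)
The plan is to show that, for a fixed pair of points $x,y\in\bdry\CC{X}\setminus P$, the value $\Delta\pi_\delta(x,y)$ computed from the grid $\grid{G}$ associated to $\delta$ stabilizes once $\delta$ is small enough, and that this stable value does not depend on the auxiliary parameter $\epsilon$ used to invoke Lemma~\ref{lem: parity of PH curve}. The corollary is essentially a bookkeeping wrapper around that lemma (together with Corollary~\ref{cor: no local cutpoints}), so the real work is already done; the only genuinely needed idea is a monotonicity observation that turns ``independence of the choice of connectors'' into ``independence of $\delta$''.

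First I would record the monotonicity observation. If $\delta'\le\delta$, then the oriented grid $\grid{G}_{\delta'}$ produced by the construction for the parameter $\delta'$ is also a legitimate oriented grid ``associated to $\delta$'': each of its connectors has type $(1,1)$ and is contained in an interval lying inside the $\delta'$-neighborhood of the corresponding vertex, hence inside the $\delta$-neighborhood; and its chosen orientation satisfies $\bdry\grid{G}_{\delta'}=P$ outside the $\delta'$-neighborhoods of the vertices, hence a fortiori outside the $\delta$-neighborhoods. Thus for every $\delta'\le\delta$ the grid $\grid{G}_{\delta'}$ is one of the grids to which Lemma~\ref{lem: parity of PH curve} applies when its parameter is $\delta$.

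Now fix $x,y\in\bdry\CC{X}\setminus P$. Since $P$ is compact and contains its finitely many vertices, we may choose $\epsilon>0$, small enough for Lemma~\ref{lem: parity of PH curve}, so that $x$ and $y$ lie outside the $\epsilon$-neighborhood of the vertices of $P$; let $\delta_0=\delta(\epsilon)$ be the parameter provided by that lemma, which we may shrink so that $\delta_0\le\epsilon$. For any $\delta_1,\delta_2\le\delta_0$, the monotonicity observation shows that $\grid{G}_{\delta_1}$ and $\grid{G}_{\delta_2}$ are both valid grids associated to $\delta_0$; moreover $\bdry\grid{G}_{\delta_i}$ is contained in $P$ together with the $\delta_i$-neighborhoods of the vertices, both of which miss $x$ and $y$, so $x,y$ lie in the domain of $\pi_{\delta_i}$. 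Lemma~\ref{lem: parity of PH curve} then gives that the parity functions of $\grid{G}_{\delta_1}$ and $\grid{G}_{\delta_2}$ agree up to a constant outside the $\epsilon$-neighborhood of the vertices, so in particular $\Delta\pi_{\delta_1}(x,y)=\Delta\pi_{\delta_2}(x,y)$. Hence $\delta\mapsto\Delta\pi_\delta(x,y)$ is constant for $\delta\le\delta_0$; call this common value $\Delta\pi(x,y)$. It is independent of the choice of $\epsilon$: two admissible choices $\epsilon,\epsilon'$ yield eventually-constant values of $\Delta\pi_\delta(x,y)$ that must coincide for all $\delta\le\min(\delta(\epsilon),\delta(\epsilon'))$. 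Finally, every $x\notin P$ lies in the domain of $\pi_\delta$ once $\delta$ is small, since $\bigcup_\delta\big(\bdry\CC{X}\setminus(P\cup(\delta\text{-neighborhoods of vertices}))\big)=\bdry\CC{X}\setminus P$. Therefore $\Delta\pi$ is a well-defined function on $(\bdry\CC{X}\setminus P)^2$ and is the pointwise limit of $\Delta\pi_\delta$ as $\delta\to0$.

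I do not expect a serious obstacle: the substantive content is in Lemma~\ref{lem: parity of PH curve}. The only step requiring care is the monotonicity observation — recognizing that the grid built for a finer parameter is itself an admissible grid for a coarser one, with the same orientation convention — since that is exactly what upgrades ``does not depend on the choice of connectors'' to ``does not depend on $\delta$''; a secondary nuisance is simply tracking the $\delta$-dependent domains of the functions $\pi_\delta$ and checking that their union exhausts $\bdry\CC{X}\setminus P$.
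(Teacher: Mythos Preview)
Your proposal is correct and takes the same approach the paper intends: the paper gives no explicit proof (the corollary is marked with \qed immediately after the statement), treating it as an immediate consequence of Lemma~\ref{lem: parity of PH curve}, and you have simply spelled out the natural details --- in particular the monotonicity observation that a grid built for $\delta'\le\delta$ is also an admissible grid for $\delta$, which is exactly what converts ``independence of the choice of connectors'' into ``independence of $\delta$''.
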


\subsection{The parity function of a Jordan curve}

We would like to follow the same idea, approximating the Jordan curve $J$ with PH curves, and taking the limit of their parity functions as the definition of the parity function of $J$.

\begin{lemma}\label{lem: parity of Jordan curve}
Let $J$ be a Jordan curve. Let $\eta >0 $, there exists $\epsilon>0$ such that for any two  $\epsilon$-approximating PH curves $P,\tilde{P}$ for $J$ the parity function of $P$ and $\tilde{P}$ are equal (up to constants) outside an $\eta$-neighborhood of $J$.
\end{lemma}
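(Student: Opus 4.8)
The plan is to show that two $\epsilon$-approximating PH curves $P$ and $\tilde P$ produce parity functions that agree, up to a constant, away from an $\eta$-neighborhood of $J$. The natural strategy is to interpolate between $P$ and $\tilde P$ by a sequence of PH curves, each differing from the next only locally, and to show that each elementary move does not change the parity function outside a small neighborhood of $J$. First I would fix two points $x,y\in\bdry\CC{X}$ outside the $\eta$-neighborhood of $J$ and aim to prove $\Delta\pi_P(x,y)=\Delta\pi_{\tilde P}(x,y)$; equivalently, by Lemma \ref{addition of grids}, that the parity function of the formal sum of the grids $\grid{G}_P+\grid{G}_{\tilde P}$ vanishes on $\{x,y\}$. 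Since both PH curves lie within distance $\epsilon$ of $J$, the set $\bdry\grid{G}_P \cup \bdry\grid{G}_{\tilde P}$ is contained in the $(\epsilon + o(1))$-neighborhood of $J$, so by choosing $\epsilon$ small and using Corollary \ref{cor: no local cutpoints} (no local cutpoints, so nearby points can be joined by a path avoiding any small set) together with the No Blob Lemma (Lemma \ref{lem: no blob}), one can connect $x$ to $y$ by a path $Q$ that stays outside the $\eta/2$-neighborhood of $J$ and hence is disjoint from both $\bdry\grid{G}_P$ and $\bdry\grid{G}_{\tilde P}$, and in particular from all the connectors involved.

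Next I would compute $\Delta\pi_{P}(Q)$ and $\Delta\pi_{\tilde P}(Q)$ using Corollary \ref{prop: computing the parity}: the value is a sum, over the supporting hyperplanes $\hyp{h}$ of the PH curve, of contributions $\Delta\pi_{\hyp{h}}(J_{\hyp{h},j})$ coming from how $Q$ crosses the arcs $\alpha_{\bhyp{h}}^{-1}(1)$. For a PH curve these $\alpha_{\bhyp{h}}$-supports are (up to $\delta$-neighborhoods of vertices) exactly the hyperplane segments $I_i$ making up the curve. So $\Delta\pi_P(Q)$ counts, mod $2$, the signed crossings of $Q$ with the segments of $P$, which — because $Q$ avoids a neighborhood of $J$ and $P$ is $\epsilon$-close to $J$ — is a purely topological intersection number of the path $Q$ with the closed curve $P$. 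The key point is then that $P$ and $\tilde P$, being $\epsilon$-approximations of the same Jordan curve $J$, are freely homotopic through curves staying within the $\epsilon$-neighborhood of $J$, which is disjoint from $Q$; hence the mod-$2$ intersection number of $Q$ with $P$ equals that with $\tilde P$. Making this rigorous without appealing to ambient manifold structure is the delicate part: one should instead argue directly that a segment-by-segment modification (replacing one $I_i$ by a homotopic detour, or sliding a vertex) changes $\bdry\grid{G}$ only inside a small ball disjoint from $Q$, and invoke Lemma \ref{lem: parity extends to boundary} — the parity function is locally constant off $\bdry\grid{G}$ — to conclude the parity at $x,y$ is unchanged by each such move.

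Concretely, the cleanest route is probably to build an explicit PH arc interpolation: choose a common refinement of the two approximations, matching up vertices that are $\epsilon$-close, and between consecutive matched vertices both $P$ and $\tilde P$ traverse short hyperplane sub-arcs lying in a common small ball $B$ around a point of $J$; by Lemma \ref{lem: no blob} (choosing $\epsilon$ small relative to $\eta$) there is only one component of $\bdry\CC{X}\setminus B$ meeting the complement of the $\eta$-neighborhood of $J$, so the difference of the two local pieces of $\bdry\grid{G}$ lies in $B$ and, forming the sum grid, its parity function is constant on that single big component — which contains both $x$ and $y$. Summing these local comparisons over all the matched segments and applying Lemma \ref{addition of grids} gives $\Delta\pi_P(x,y) = \Delta\pi_{\tilde P}(x,y)$.

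I expect the main obstacle to be the bookkeeping needed to \emph{match up} the two PH approximations segment-by-segment: a priori $P$ and $\tilde P$ have unrelated combinatorics (different numbers of segments, different supporting hyperplanes, vertices in slightly different places), so the interpolation has to be set up carefully — probably by first passing through a common ``fine'' approximation and handling the case of two approximations that already share most of their structure. The underlying geometric fact (mod-$2$ intersection number with $J$ is a homotopy invariant away from $J$) is intuitively clear, but the paper's framework deliberately avoids invoking manifold topology, so the work is in reducing everything to the local moves controlled by Lemma \ref{lem: no blob}, Lemma \ref{lem: parity extends to boundary}, and Corollary \ref{prop: computing the parity}.
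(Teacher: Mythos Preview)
Your third and fourth paragraphs are essentially the paper's proof: pass through a common fine approximation $P'$, partition $J$ into short pieces with matched points $\zeta_i\in P$, $\zeta'_i\in P'$, and use the No Blob Lemma together with Lemma~\ref{addition of grids} to see that each local difference contributes only a constant outside the $\eta$-neighborhood. The one step you should make explicit is that the ``local pieces'' must be closed up into genuine cycle grids by joining $\zeta_i$ to $\zeta'_i$ with short auxiliary PH arcs $Q_i$ (built via Lemma~\ref{lem: existence of PH approximation of arc}), so that $\grid{G}+\grid{G}'=\sum_i\grid{G}_i$ with each $\bdry\grid{G}_i$ contained in a small ball; without these connecting arcs the local pieces are open arcs rather than grids, and the addition lemma does not directly apply.
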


\begin{proof}
By the No-Blob Lemma (Lemma \ref{lem: no blob}) let $\eta'$ be such that outside any set that is contained in a ball $B(\xi,\eta')$ there is at most one component of which is not contained in $B(\xi,\eta)$. 
By local connectedness, let $0<\epsilon<\frac{\eta'}{4}$ be such that any two points in $B(\xi,\epsilon)$ can be connected by a path in $B(\xi,\frac{\eta'}{2})$.

Let $P$ and $\tilde{P}$ be two $\epsilon$-approximating PH curves for $J$. Let $\epsilon'>0$ be smaller than the diameter of the hyperplanes of $P$ and $\tilde{P}$, and let $P'$ be an $\epsilon'$ approximating PH curve for $J$ such that the diameter of the  hyperplanes of $P'$ are smaller than $\epsilon'$. 

It suffices to show that the parity functions of $P'$ and $P$ (similarly, $\tilde{P}$) coincide (up to constant) outside the $\eta$-neighborhood of $J$.

\begin{figure}[t!]
\begin{center}
\def\svgwidth{\textwidth}
\input{./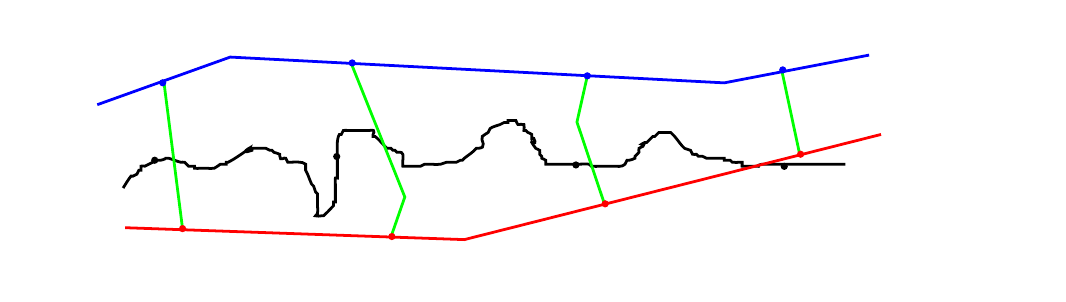_tex}
\caption{The Jordan curve $J$ (in black), two approximating PH curves $P$ (in blue) and $P'$ (in red) and the auxiliary PH curves $Q_i$ (in green).}
\label{fig: parity for Jordan}
\end{center}
\end{figure}

Let $\xi_0,\xi_1,\ldots,\xi_n=\xi_0$ be a partition of $J$ into intervals of diameter less than $\epsilon$. Let $\zeta_i$ and $\zeta'_i$ be the corresponding points on $P$ and $P'$ at distance less than $\epsilon$ from $\xi_i$. By perturbing these points if necessary, we may assume that $\{\zeta_i, \zeta'_i\}_{i=0} ^{n-1}$ are $2n$ distinct points, and that they are not vertices of $P$ and $P'$ (see Figure \ref{fig: parity for Jordan}).
Since $\epsilon<\frac{\eta'}{4}$, the interval between $\zeta_i$ and $\zeta_{i+1}$ on $P$ (resp. $\zeta'_i$ and $\zeta'_{i+1}$ on $P'$) is contained in $B(\xi_i,\frac{\eta'}{2})$.


By Lemma \ref{lem: existence of PH approximation} and the definition of $\epsilon$, connect $\zeta_i,\zeta'_i$ by a PH arc $Q_i$ in $B(\xi,\frac{\eta'}{2})$.
Form the short closed path $P_i$ by connecting $\zeta_i,\zeta_{i+1},\zeta'_i,\zeta'_{i+1}$ along $P,Q_{i+1},P',Q_i$.
We note that by the construction, each $P_i$ is contained in $B(\xi_i,\frac{\eta'}{2})$.

Let $\delta<\eta'/2$ be such that the $\delta$-neighborhoods of all the vertices of both $P,P'$ and the arcs $Q_i$ are disjoint. 
Choose a connector in the $\delta$-neighborhood of each vertex (including connectors for the endpoints of $Q_i$ with $P$ and $P'$). 
Let $\grid{G},\grid{G}'$ and $\grid{G}_i$ be the grids described in Subsection \ref{subsec: the parity of a PH curve} for $P,P'$ and $P_i$.
Note that by construction $\grid{G}+\grid{G}'=\sum_i \grid{G}_i$ (in the notation of Lemma \ref{addition of grids}).

Since each $P_i$ is contained in $B(\xi_i,\frac{\eta'}{2})$ and $\delta<\frac{\eta'}{2}$, the boundary $\bdry\grid{G}_i$ of the corresponding grid is contained in $B(\xi_i,\eta')$. This implies that the complement of each $\bdry\grid{G}_i$ has at most one component which is not contained in the $\eta$-neighborhood of $J$. Thus, the parity function $\pi_i$ of $\grid{G}_i$ is constant outside the $\eta$-neighborhood of $J$. If we denote by $\pi$ and $\pi'$ the parity functions of $P$ and $P'$ respectively, then by Lemma \ref{addition of grids} $\pi-\pi'=\sum_i \pi_i$. We deduce that $\pi - \pi'$ is constant outside an $\eta$-neighborhood of $J$.
\end{proof}

This implies the following.

\begin{corollary}
For a Jordan curve, the parity $\Delta\pi$ for an $\epsilon$ approximating PH curve has a limit as $\epsilon \to 0$, which is defined outside $J$.\qed
\end{corollary}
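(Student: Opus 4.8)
The plan is to run a straightforward Cauchy-type argument: Lemma~\ref{lem: parity of Jordan curve} already says that the parity functions of $\epsilon$-approximating PH curves become mutually indistinguishable off a shrinking neighborhood of $J$ as $\epsilon\to 0$, so they converge. The only care needed is bookkeeping about domains and about the additive-constant ambiguity of parity functions, which I handle by working throughout with the difference $\Delta\pi$ rather than $\pi$ itself. For $\eta>0$ write $N_\eta(J)$ for the open $\eta$-neighborhood of $J$ with respect to the fixed visual metric.

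First I would fix a pair of points $x,y\in\bdry\CC{X}\setminus J$ and set $\eta_0=\tfrac13\min\{d(x,J),d(y,J)\}>0$, so that $x$ and $y$ lie outside $N_{\eta_0}(J)$. Applying Lemma~\ref{lem: parity of Jordan curve} with $\eta=\eta_0$ produces $\epsilon_0>0$ such that any two $\epsilon_0$-approximating PH curves have parity functions that differ by a constant off $N_{\eta_0}(J)$. Since every $\epsilon$-approximating PH curve with $\epsilon\le\epsilon_0$ is in particular an $\epsilon_0$-approximating PH curve, the same conclusion holds for any $\epsilon$- and $\epsilon'$-approximating PH curves with $\epsilon,\epsilon'\le\epsilon_0$; hence the difference $\Delta\pi_P(x,y)=\pi_P(x)-\pi_P(y)$, which is insensitive to the constant, does not depend on the choice of such $P$. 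Call this common value $\ell(x,y)$. By Lemma~\ref{lem: existence of PH approximation} an $\epsilon$-approximating PH curve exists for every $\epsilon>0$, and by the construction of Subsection~\ref{subsec: the parity of a PH curve} together with Lemma~\ref{lem: parity of PH curve} its parity $\Delta\pi_P$ is defined on all pairs of points lying off $P\subseteq N_\epsilon(J)$; shrinking $\epsilon$ below $\eta_0$ puts $x,y$ in that domain, so $\ell(x,y)$ is genuinely defined.

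It then remains only to read off that $\ell$ is the asserted limit: for every $\epsilon'$-approximating PH curve $P$ with $\epsilon'\le\min\{\epsilon_0,\eta_0\}$ we have $\Delta\pi_P(x,y)=\ell(x,y)$, so $\Delta\pi_P(x,y)\to\ell(x,y)$ as $\epsilon'\to0$. As $x,y$ were arbitrary points of $\bdry\CC{X}\setminus J$, the function $\ell$ is defined on all of $(\bdry\CC{X}\setminus J)^2$ — equivalently, $\Delta\pi$ converges to a well-defined ``parity function of $J$'' (determined up to a global constant) on $\bdry\CC{X}\setminus J$. I would also remark, as is implicit in the earlier lemmas, that $\ell$ is locally constant off $J$: near any $\xi\notin J$, for $\epsilon$ small a whole neighborhood of $\xi$ avoids $P$ and lies where $\Delta\pi_P$ is locally constant (Lemma~\ref{lem: parity extends to boundary}), and the stabilization above transports this to $\ell$.

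The argument has essentially no obstacle of its own; all the real content sits in Lemma~\ref{lem: parity of Jordan curve} (and upstream in the No-Blob Lemma~\ref{lem: no blob} and the existence of PH approximations). The one point I would be careful to state cleanly is the compatibility of domains: $\epsilon$ must be chosen small both to invoke Lemma~\ref{lem: parity of Jordan curve} and to ensure the fixed evaluation points lie off the approximating curve, so that the quantities being compared are honestly defined.
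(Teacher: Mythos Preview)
Your argument is correct and follows the same line the paper intends: the corollary carries a bare \qed, meaning it is meant to be read as an immediate consequence of Lemma~\ref{lem: parity of Jordan curve}, and your Cauchy/stabilization argument is precisely the spelling-out of that implication. The only thing you have added beyond the paper is the explicit domain bookkeeping (choosing $\epsilon\le\min\{\epsilon_0,\eta_0\}$ so that the evaluation points lie off $P$), which is a sensible clarification rather than a different idea.
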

\section{Jordan's theorem}
\label{sec: Jordan thm}

\begin{proposition}\label{prop: parity around a vertex}
Let $P$ be a PH curve, and let $\xi_i$ be a vertex of $P$. Let $J_{i},J_{i+1}$ be the segments of $P$ on $\bhyp{h}_i,\bhyp{h}_{i+1}$ respectively which are incident to $\xi_i$. 
Assume $J_i$ does not intersect $\bhyp{h}_{i+1}$ except at $\xi_i$.
Let $I_{i}$ be an open interval in $\bhyp{h}_{i+1}$ which is disjoint from $P$ and such that one of its endpoints is $\xi_i$. 
Then points on $I_i$ have the same parity with respect to $P$ as any point in $\comp{\hs{h}}_{i+1}$ which is close enough to $J_{i+1}$, where $\hs{h}_{i+1}$ is the halfspace of $\hyp{h}_{i+1}$ which contains $J_i$.
\end{proposition}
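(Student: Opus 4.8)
The plan is to prove the equality of parities by an explicit computation with Corollary~\ref{prop: computing the parity}, after pinning down the shape of $\bdry\grid{G}$ near $\xi_i$ on the $\comp{\hs{h}}_{i+1}$-side of $\bhyp{h}_{i+1}$.

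First I would recall the set-up of Subsection~\ref{subsec: the parity of a PH curve}: the parity of $P$ is the $\delta\to 0$ limit of the parity function $\pi$ of the oriented grid $\grid{G}=(\gridy{H},\gridy{C},\{\alpha_{\bhyp{h}}\})$ attached to $P$, where $\gridy{H}$ carries the segments of $P$, the connectors of $\gridy{C}$ lie inside the $\delta$-neighborhoods of the vertices of $P$, and the orientations are chosen so that $\bdry\grid{G}$ agrees with $P$ off those neighborhoods. By Lemma~\ref{lem: parity of PH curve} I fix $\delta$ small, in particular less than the distance from the point $p\in I_i$ I wish to treat to the (finite) vertex set of $P$. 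Since $\pi$ is constant on components of $\bdry\CC{X}\setminus\bdry\grid{G}$ (Lemma~\ref{lem: parity extends to boundary}), it is enough to join $p$ to an arbitrary point $q\in\comp{\hs{h}}_{i+1}$ close enough to $J_{i+1}$ by a path $Q$ with $\Delta\pi(Q)=0$.

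The main step is a local analysis of $\bdry\grid{G}$ near $\xi_i$. The hypothesis that $J_i$ meets $\bhyp{h}_{i+1}$ only at $\xi_i$ says that $J_i$, away from $\xi_i$, lies strictly on the $\hs{h}_{i+1}$-side of $\bhyp{h}_{i+1}$. Now examine the orientation $\alpha_{\bhyp{h}_i}$ on a neighbourhood of $\xi_i$ in $\bhyp{h}_i$: it meets the connector $C$ of $\grid{G}$ at $\xi_i$, and on the $J_i$-side of $C$ it equals $1$, so by Definition~\ref{def: orientation of hyp} it equals $\charf{\bhs{k}}$ there for the halfspace $\hs{k}$ of $\hyp{h}_{i+1}$ containing $J_i$ --- i.e.\ $\hs{k}=\hs{h}_{i+1}$. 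Hence $\alpha_{\bhyp{h}_i}$ vanishes on the $\comp{\hs{h}}_{i+1}$-side of $\bhyp{h}_{i+1}$ near $\xi_i$. Combining this with the facts that $\alpha_{\bhyp{h}_{i+1}}$ and the connectors at $\xi_i$ are carried on $\bhyp{h}_{i+1}$ itself, that the remaining hyperplanes of $\gridy{H}$ have their $\alpha$-supports bounded away from $\xi_i$ for $\delta$ small (they are near segments of $P$ or near the other vertices), and that $\bdry\grid{G}=P$ outside the $\delta$-neighborhoods of vertices while $P$ is disjoint from $I_i$, I obtain: there is a connected open set $W\subseteq\comp{\hs{h}}_{i+1}$, disjoint from $\bdry\grid{G}$, which collars the subarc of $\bhyp{h}_{i+1}$ running from just past $p$, through $\xi_i$, and along $J_{i+1}$; in particular $W$ contains a full one-sided neighbourhood of $J_{i+1}$ inside $\comp{\hs{h}}_{i+1}$.

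Finally I would build $Q$: a short segment from $p$ to a point $p'\in W$ just across $\bhyp{h}_{i+1}$ --- this segment meets $\bhyp{h}_{i+1}$ only at $p$, where $\alpha_{\bhyp{h}_{i+1}}=0$ since $p\in I_i$ is not on $J_{i+1}$, and for $\delta$ small it is otherwise disjoint from $\bdry\grid{G}$ --- followed by a path inside $W$ from $p'$ to $q$, which exists since $W$ is open, connected and locally path-connected (Lemma~\ref{lem: bdry is path connected}) and contains $q$. Then $Q$ avoids every connector of $\grid{G}$, and it meets $\alpha_{\bhyp{h}}^{-1}(1)$ for no $\hyp{h}\in\gridy{H}$ (for $\hyp{h}=\hyp{h}_{i+1}$ it meets $\bhyp{h}_{i+1}$ only at $p\notin\alpha_{\bhyp{h}_{i+1}}^{-1}(1)$), so Corollary~\ref{prop: computing the parity} yields $\Delta\pi(Q)=0$, whence $\pi(p)=\pi(q)$. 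Letting $\delta\to 0$ finishes the proof, since neither $p$ nor $q$ is a vertex of $P$.

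I expect the crux to be the middle paragraph: showing that $\alpha_{\bhyp{h}_i}$ restricts to $\charf{\bhs{h}_{i+1}}$ near the connector $C$, and hence that no piece of $\bdry\grid{G}$ protrudes from $\bhyp{h}_{i+1}$ into $\comp{\hs{h}}_{i+1}$ near $\xi_i$ --- this is the one place where the hypothesis on $J_i$ is used. The remaining points (the $\delta\to 0$ bookkeeping, the connectedness of the collar $W$, and arranging $Q$ and $W$ to also miss the segments of $P$ carried by the other hyperplanes of $\grid{G}$) are routine in the local picture at a vertex.
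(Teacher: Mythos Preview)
Your local analysis of $\alpha_{\bhyp{h}_i}$ near the connector is correct and elegant: since $J_i\setminus\{\xi_i\}\subset\obhs{h}_{i+1}$ and $\alpha_{\bhyp{h}_i}=1$ there, Definition~\ref{def: orientation of hyp} forces $\alpha_{\bhyp{h}_i}=\charf{\bhs{h}_{i+1}}$ on the interval carrying $C$, so indeed no piece of $\bdry\grid{G}$ protrudes into $\comp{\obhs{h}}_{i+1}$ near $\xi_i$. The gap is in the sentence that follows. You assert the existence of a \emph{connected} open $W\subset\comp{\obhs{h}}_{i+1}$ collaring the arc $[p,\xi_i]_{\bhyp{h}_{i+1}}\cup J_{i+1}$, but nothing you have established gives you connectedness of one-sided neighbourhoods of arcs on $\bhyp{h}_{i+1}$. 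Local path-connectedness of $\bdry\CC{X}$ (Lemma~\ref{lem: bdry is path connected}) only tells you that small balls are path-connected; it says nothing about $V\cap\comp{\obhs{h}}_{i+1}$ for a small ball $V$, and at this stage of the argument we do not yet know that $\bdry\CC{X}$ is planar near $\bhyp{h}_{i+1}$. Without this, there is no reason a point $p'$ just across from $p$ lies in the same component of (neighbourhood of the arc)~$\cap\,\comp{\obhs{h}}_{i+1}$ as $q$.

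This is exactly the obstruction the paper's proof is built to overcome. It introduces an auxiliary small hyperplane $\hyp{t}$ around $\xi_i$; because hyperplane stabilizers are surface groups, $\bhyp{t}$ is a \emph{circle}, and its arcs in $\bhyp{t}\setminus\bhyp{h}_{i+1}$ are honest paths we can use. If some such arc lies in $\comp{\obhs{h}}_{i+1}$ and joins $I_i$ to $J_{i+1}$, your argument goes through (this is the paper's easy case). But there is a second case where every arc of $\bhyp{t}$ joining $I_i$ to $J_{i+1}$ lies on the $\hs{h}_{i+1}$-side and hence may cross $J_i$; here the paper computes $1=\Delta\pi_{\hyp{t}}(J_i)=\sum_j\type_{\hyp{h}_i}(A_j\cap J_i)$ and then invokes Proposition~\ref{prop: two state solution} to flip $\type_{\hyp{h}_i}$ to $\type_{\hyp{t}}=\Delta\pi_{\hyp{h}_i}(A_j)$, producing an arc on the wrong side whose parity contribution is $1$ rather than $0$. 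Your proposal never uses Proposition~\ref{prop: two state solution}, and that is a symptom: the ``no type $(1,0)$ connector'' result is precisely what rescues the argument when the connected collar you want does not exist.
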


\begin{proof}
We denote $\xi = \xi_i, J'=J_i, J=J_{i+1}, I=I_i, \hyp{h}'=\hyp{h}_i, \hyp{h}=\hyp{h}_{i+1}$ (see Figure \ref{fig: two state for PH curves}).

\begin{figure}
\begin{center}
\def\svgwidth{0.7\textwidth}
\input{./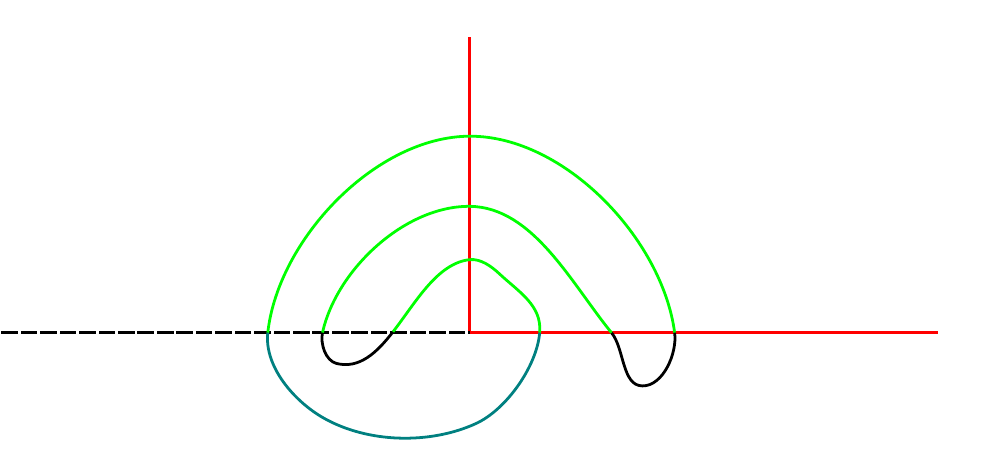_tex}
\caption{The two possible cases of the arcs of $\bhyp{t}$ drawn on the same figure. On the bottom, an arc $A$ which does not cross $J'$ and connects $I$ and $J$. On the top, a collection of arcs $A_i$ which cross $J'$}
\label{fig: two state for PH curves}
\end{center}
\end{figure}

Clearly all the points on $I$ have the same parity, because they are connected with a path that does not meet $P$, namely a subinterval of $I$.
By Lemma \ref{lem: the parity locally}, around any point of $J$ different of $\xi$, the parity function is the characteristic function of $\hs{h}$ or $\comp{\hs{h}}$. Thus, all the points in $\comp{\hs{h}}$ which are close enough to $J$ have the same parity.

Thus it suffices to find two points, one on $I$ and the other in $\comp{\hs{h}}$ close enough to $J$ that have the same parity.

Let $\hs{t}$ be a halfspace neighborhood of $\xi$ which is small enough such that $\hs{t}\cap (P\cup\bhyp{h}) \subset I\cup J \cup J'$. 
The connected components of $\bhyp{t}\setminus \bhyp{h}$ are arcs which stay on one side of $\hyp{h}$.
If one of these arcs $A$ connects $I$ with $J$ and is contained in $\comp{\hs{h}}$ then since it does not cross $P$ we deduce that its endpoint on $I$ has the same parity as any other point on $A$, and in particular points which are arbitrarily close to $J$ in $\comp{\hs{h}}$.

If not, let us analyse how the arcs of $\bhyp{t}\setminus \bhyp{h}'$ intersect $J'$. First we observe that only finitely many arcs of $\bhyp{t}\setminus \bhyp{h}$ intersect $J'$. We denote them $A_1,\ldots,A_n$. 

Let $\pi$ be the parity function of $P$, and let $\pi_{\hyp{t}}$ be the parity function of the single hyperplane grid $\hyp{t}$, i.e,  $\pi_{\hyp{t}}$ is (up to a constant) the characteristic function of $\bhs{t}$. 

If we follow the segment $J'$ we see that one of its endpoints is in $\bhs{t}$ and the other is in $\comp{\bhs{t}}$. Therefore the difference in the value of $\pi_{\hyp{t}}$ between the two endpoints is $1$. On the other hand, by Corollary \ref{prop: computing the parity}, it is the sum over the types of intersections of $J'$ with the hyperplane $\hyp{t}$. We break this sum to each arc $A_i$.

\[ 1=\Delta\pi_{\hyp{t}}(J') = \sum_i \type _{\hyp{h}'} (A_i \cap J') = \sum_i \type _{\hyp{t}}(A_i \cap J') = \sum_i \Delta\pi_{\hyp{h}'} (A_i)\]

Where the third equality follows from Proposition \ref{prop: two state solution}. Note that any arc $A_i$ whose endpoints lie on the same side of $\xi$, namely, both on $I$ or both on $J$, contributes $0$ to the sum because its endpoints have the same parity with respect to $\pi$.

Therefore, there exists $i$ such that the arc $A_i$ whose endpoints are on $I$ and $J$ and such that $\Delta\pi_{\hyp{h}'} (A_i \cap J') = 1$. Since $A_i$ intersects $P$ only in $J'$ and is contained in $\bhs{h}$ it follows that points on $I$ have different parity than points in $\bhs{h}$ which are close enough to $J$, from which the desired conclusion follows.
\end{proof}

\begin{corollary}\label{cor: two state solution for PH curves}
Let $P$ be a PH curve, and let $\xi_{i},\xi_{i+1}$ be two consecutive vertices of $P$. 
Let $J_{i}, J_{i+1}, J_{i+2}$ be the segments of $P$ on $\bhyp{h}_i,\bhyp{h}_{i+1},\bhyp{h}_{i+1}$ respectively which are incident with $\xi_i$ and $\xi_{i+1}$ in the obvious way. 
Assume $J_i$ and $J_{i+2}$  do not intersect $\bhyp{h}_{i+1}$ except at $\xi_i$ and $\xi_{i+1}$ respectively, and $P$ intersects $J_{i+1}$ only at the segment $J_{i+1}$.
Let $I_i$ (resp. $I_{i+2}$) be an open interval on $\bhyp{h}_{i+1}$ which is disjoint from $P$ and one of its endpoints is $\xi_i$ (resp. $\xi_{i+1}$)

Then, $J_i$ and $J_{i+2}$ are on the same side of $\hyp{h}_{i+1}$ if and only if $I_i$ and $I_{i+2}$ have the same parity with respect to $P$.\qed
\end{corollary}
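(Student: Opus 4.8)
The plan is to invoke Proposition~\ref{prop: parity around a vertex} at the two vertices $\xi_i$ and $\xi_{i+1}$ and then compare the resulting descriptions of the $P$-parity function $\pi$ near the middle segment $J_{i+1}$.

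First I would apply Proposition~\ref{prop: parity around a vertex} at the vertex $\xi_i$ (with $\hyp{h}_{i+1}$ as the ``middle'' hyperplane): since $J_i$ meets $\bhyp{h}_{i+1}$ only at $\xi_i$ and $I_i$ is an interval on $\bhyp{h}_{i+1}$ with endpoint $\xi_i$ disjoint from $P$, the proposition gives that every point of $I_i$ has the same $P$-parity as every point of $\comp{\hs{h}}_{i+1}$ lying close enough to $J_{i+1}$, where $\hs{h}_{i+1}$ denotes the halfspace of $\hyp{h}_{i+1}$ containing $J_i$. Then I would apply the same proposition at $\xi_{i+1}$, this time with $(\hyp{h}_{i+2},J_{i+2})$ in the role of $(\hyp{h}_i,J_i)$, still with $\hyp{h}_{i+1}$ as middle hyperplane, and with $I_{i+2}$ in the role of $I_i$; the required input, namely that $J_{i+2}$ meets $\bhyp{h}_{i+1}$ only at $\xi_{i+1}$ and that $P$ meets $\bhyp{h}_{i+1}$ only along $J_{i+1}$, is exactly what the remaining hypotheses of the corollary provide. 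This yields that every point of $I_{i+2}$ has the same $P$-parity as every point of $\comp{\hs{h}'}_{i+1}$ close enough to $J_{i+1}$, where $\hs{h}'_{i+1}$ is the halfspace of $\hyp{h}_{i+1}$ containing $J_{i+2}$.

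Next I would pin down how $\pi$ behaves on the two sides of a neighbourhood of the arc $J_{i+1}$. Using that $P$ meets $\bhyp{h}_{i+1}$ only along $J_{i+1}$, after shrinking the connectors of the grid of $P$ every interior point of $J_{i+1}$ has a neighbourhood $V$ in which $\bdry\grid{G}$ equals $V\cap\bhyp{h}_{i+1}$; by Lemma~\ref{lem: the parity locally} the function $\Delta\pi$ coincides with $\Delta\pi_{\hyp{h}_{i+1}}$ on $V$, so $\pi$ jumps by exactly $1$ across $\hyp{h}_{i+1}$ there and is locally constant on each side. Since $J_{i+1}$ is a connected arc met by $P$ only along itself, a one-sided tubular neighbourhood of $J_{i+1}$ is connected, so these local values patch: there is a constant $c\in\Z/2$ with $\pi\equiv c$ at all points close to $J_{i+1}$ on the side of $\hyp{h}_{i+1}$ \emph{not} containing $J_i$, and $\pi\equiv c+1$ at all points close to $J_{i+1}$ on the side containing $J_i$. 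In particular $\comp{\hs{h}}_{i+1}$ carries the value $c$ near $J_{i+1}$.

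Finally I would combine the three pieces. If $J_i$ and $J_{i+2}$ lie on the same side of $\hyp{h}_{i+1}$, then $\comp{\hs{h}'}_{i+1}=\comp{\hs{h}}_{i+1}$, so both $I_i$ and $I_{i+2}$ have parity $c$ and hence the same parity; if they lie on opposite sides, then $\comp{\hs{h}'}_{i+1}=\hs{h}_{i+1}$, so $I_{i+2}$ has parity $c+1\neq c$, and the parities differ. I expect the only genuinely delicate point to be the third paragraph: showing that ``the parity of points close to $J_{i+1}$ on a given side of $\hyp{h}_{i+1}$'' is unambiguous, i.e. that $\pi$ is constant on a one-sided neighbourhood of the \emph{entire} arc $J_{i+1}$. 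This is precisely where the hypothesis that $P$ intersects $J_{i+1}$ only along $J_{i+1}$ is essential, and it should follow from the local description above together with the connectedness of $J_{i+1}$ and the local connectedness of $\bdry G$ furnished by Theorem~\ref{Bowditch}.
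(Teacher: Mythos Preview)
Your proposal is correct and is exactly the argument the paper has in mind: the corollary is stated with a \qed\ because it is meant to follow by applying Proposition~\ref{prop: parity around a vertex} once at $\xi_i$ and once at $\xi_{i+1}$ and comparing the halfspaces $\hs{h}_{i+1}$ and $\hs{h}'_{i+1}$. The point you single out as delicate---that $\pi$ is constant on each side of $J_{i+1}$ near the whole arc---is already part of the conclusion (and proof) of Proposition~\ref{prop: parity around a vertex} via Lemma~\ref{lem: the parity locally}, so your third paragraph is just making explicit what Proposition~\ref{prop: parity around a vertex} already hands you, with the extra hypothesis that $P$ meets $J_{i+1}$ only along $J_{i+1}$ guaranteeing no other segment of the grid interferes.
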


In other words, if we denote by $\Delta\pi (J_{i+1})$ the difference of parities between $I_i$ and $I_{i+2}$ with respect to $P$ and by $\Delta\pi_{\hyp{h}} (J_{i+1})$ the difference of parities between $J_i$ and $J_{i+2}$ with respect to the parity function $\pi_{\hyp{h}}$ of the grid defined by $\hyp{h}$ (see Example \ref{eg: parity of hyperplane grid}), then the previous corollary shows that $\Delta\pi (J_{i+1})=\Delta\pi_{\hyp{h}} (J_{i+1})$.

\begin{theorem} \label{thm: Jordan thm}
Let $J$ be a Jordan curve. There are two points on which the associated parity function $\pi_J$ takes different values. In particular, $J$ separates $\bdry\CC{X}$ into more than two components.
\end{theorem}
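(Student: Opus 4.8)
The plan is to prove that the limiting parity function $\pi_J$ of $J$ (defined in Section~\ref{sec: the parity of a Jordan curve} as the limit of the parity functions $\pi_{P_\epsilon}$ of the $\epsilon$-approximating PH curves $P_\epsilon$) is not constant on $\bdry\CC{X}\setminus J$; since $\pi_J$ is locally constant there, this at once shows that $\bdry\CC{X}\setminus J$ is disconnected, i.e.\ that $J$ separates $\bdry\CC{X}$. Before the main argument I would record two reductions. First, by Theorem~\ref{Bowditch} and Corollary~\ref{cor: no local cutpoints} the space $\bdry\CC{X}$ has no local cutpoints; in particular it is not homeomorphic to $\mathbb{S}^1$, and $J$ has empty interior, since a point possessing an arc neighbourhood would be a local cutpoint. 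Second, if $J=\bhyp{h}$ for some hyperplane $\hyp{h}$, then $\pi_J=\charf{\bhs{h}}$, which takes both values $0$ and $1$ because $\obhs{h}$ and $\obhs{\comp{h}}$ are non-empty ($\CC{X}$ is essential, cf.\ Subsection~\ref{subsec: sum of prelims}); so from now on I may assume $J$ is not the limit set of any hyperplane.

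Next I would fix a hyperplane $\hyp{h}$ whose limit circle $\bhyp{h}$ crosses $J$, meaning $J\cap\obhs{h}\ne\emptyset$ and $J\cap\obhs{\comp{h}}\ne\emptyset$: this is obtained by choosing two distinct points of $J$ and separating them by a halfspace, exactly as in the proof of Lemma~\ref{lem: non-separation}. For each small $\epsilon>0$ I then take, by Lemma~\ref{lem: existence of PH approximation}, an $\epsilon$-approximating PH curve $P=P_\epsilon$ of $J$ whose intersection with $\bhyp{h}$ is a union of segments of $P$ (and, inspecting that construction, I may further assume each hyperplane carries at most one segment of $P$ and that the transversality hypotheses used in Proposition~\ref{prop: parity around a vertex} hold). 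Since $P$ is connected and, for $\epsilon$ small, meets both of the open sets $\obhs{h}$ and $\obhs{\comp{h}}$, it meets $\bhyp{h}$; thus $P\cap\bhyp{h}$ is a non-empty union of segments of $P$.

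The core of the argument analyses $\pi_P$ near these segments, and here the plan has two parts. Pick a segment $S\subseteq\bhyp{h}$ of $P$ and an interior point $\xi$ of $S$ that lies on no other segment of $P$; such $\xi$ exists because the intersection of $\bhyp{h}$ with the limit set of any other hyperplane used by $P$ is empty, a pair of points, or a Cantor set --- never an arc --- hence nowhere dense in the circle $\bhyp{h}$, so Baire's theorem applies. With the orientation chosen for the PH-curve grid $\grid{G}$ of $P$ one has $\alpha_{\bhyp{h}}(\xi)=1$ while $\xi$ lies on no other oriented hyperplane of $\grid{G}$, so Lemma~\ref{lem: the parity locally} gives $\Delta\pi_P=\Delta\pi_{\hyp{h}}$ on a neighbourhood of $\xi$; hence $\pi_P$ takes two distinct values on the two local sides of $\bhyp{h}$ at $\xi$, so $\pi_P$ is not constant. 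The \textbf{main obstacle} is the second part: upgrading this to the statement that $\pi_P$ takes both of its values \emph{outside a fixed neighbourhood $N$ of $J$}, independent of $\epsilon$, so that the difference is not lost in the limit $\epsilon\to 0$ governed by Lemma~\ref{lem: parity of Jordan curve}. I would establish this by travelling around the circle $\bhyp{h}$: along each arc of $\bhyp{h}\setminus P$ the function $\pi_P$ is constant and agrees on both sides of $\bhyp{h}$, whereas across $S$ it jumps; Proposition~\ref{prop: parity around a vertex} and Corollary~\ref{cor: two state solution for PH curves} control, at each vertex of $P$ where $P$ leaves $\bhyp{h}$, on which side of $\bhyp{h}$ the outgoing segment of $P$ lies, and thereby how the two values of $\pi_P$ are distributed along $\bhyp{h}$; since $\bhyp{h}$ is not contained in $N$ (because $J\ne\bhyp{h}$), following this around the circle exhibits the two values of $\pi_P$ at points of $\bdry\CC{X}\setminus N$.

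Finally, as this holds for all sufficiently fine $P_\epsilon$ with $N$ fixed, Lemma~\ref{lem: parity of Jordan curve} and the resulting construction of $\pi_J$ show that $\pi_J$ takes both values on $\bdry\CC{X}\setminus J$; being locally constant, $\pi_J$ is therefore non-constant across components, so $\bdry\CC{X}\setminus J$ is disconnected and $J$ separates $\bdry\CC{X}$, which is what Theorem~\ref{KSCT}(\ref{KSCT_Jordan}) requires.
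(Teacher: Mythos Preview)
Your overall strategy coincides with the paper's: choose a hyperplane $\hyp{h}$ whose limit circle crosses $J$, take an $\epsilon$-approximating PH curve $P$ that meets $\bhyp{h}$ only along segments of $P$, and then study $\pi_P$ along $\bhyp{h}$ via Corollary~\ref{cor: two state solution for PH curves}. You also correctly isolate the main obstacle: showing that $\pi_P$ attains two values \emph{outside} a fixed neighbourhood $N$ of $J$, not merely near the segments of $P$ on $\bhyp{h}$.

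However, your resolution of that obstacle is where the gap lies. The sentence ``since $\bhyp{h}$ is not contained in $N$, following this around the circle exhibits the two values of $\pi_P$ at points of $\bdry\CC{X}\setminus N$'' is not justified. Knowing that $\bhyp{h}\not\subseteq N$ gives only \emph{one} point of $\bhyp{h}$ outside $N$; nothing you have said forces a second point outside $N$ with the opposite parity. The jumps of $\pi_P$ along $\bhyp{h}$ occur exactly at the segments of $P$ lying on $\bhyp{h}$, all of which sit inside $N$; the short complementary arcs between consecutive such segments (the $I_i$'s of Corollary~\ref{cor: two state solution for PH curves}) may also lie entirely inside $N$, so both parity values could a priori be witnessed only inside $N$, with a single value surviving on $\bhyp{h}\setminus N$. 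Your Baire-category point $\xi$ and the local computation $\Delta\pi_P=\Delta\pi_{\hyp{h}}$ are correct but do not help here, for exactly this reason.

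The missing idea, which the paper supplies, is to fix two points $\zeta_1,\zeta_2\in J$ separated by $\hyp{h}$ and split $J$ (hence $P$) into two arcs $A,B$ \emph{before} choosing $\epsilon$. The compact sets $A\cap\bhyp{h}$ and $B\cap\bhyp{h}$ are disjoint, so they are at positive distance $r>0$; taking $\epsilon<r/2$ guarantees that between any cluster of $A$-segments and the next cluster of $B$-segments on $\bhyp{h}$ one can place partition points at distance $\geq\epsilon$ from $P$ (hence outside $N$). Now Corollary~\ref{cor: two state solution for PH curves} converts the sum of parity jumps across all segments inside one $A$-interval $A_j$ into $\Delta\pi_{\hyp{h}}$ of the corresponding piece of $A$, and summing over $j$ gives $\sum_j\Delta\pi(A_j)=\Delta\pi_{\hyp{h}}(A)=1$ because $A$ runs from $\obhs{h}$ to $\obhs{\comp{h}}$. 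Hence some $A_j$ has endpoints (which lie outside $N$) of different parity. This odd-count along an arc of $J$ crossing $\hyp{h}$ is the step your outline lacks.
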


\begin{proof}
Let $\zeta_1,\zeta_2$ be two distinct points on $J$, and let $A,B$ be the two arcs on $J$ that connect $\zeta_1$ to $\zeta_2$. Let $\hyp{h}$ be a hyperplane that separates $\zeta_1$ and $\zeta_2$.

Let $r$ be the distance between $A \cap \bhyp{h}$ and $B \cap \bhyp{h}$, and let $\epsilon < \frac{r}{2}$. By Lemma \ref{lem: existence of PH approximation}, let $P$ be an approximating PH curve for $J$ such that the parity functions $\pi_J$ and $\pi$ of $J$ and $P$ respectively coincide outside an $\epsilon$-neighborhood of $J$ or $P$, and such that $P$ intersects $\hyp{h}$ along segments of $P$. It suffices to find two points on $\bhyp{h}$ which are at distance $\epsilon$ from $P$ with different parity with respect to $\pi$. By abuse of notation we denote by $A$ and $B$ the corresponding approximating arcs on $P$.

We partition $\bhyp{h}$ into intervals $A_1,B_1,A_2,B_2,\ldots A_m,B_m$  such that the endpoints of each $A_i,B_i$ are at distance $\epsilon$ from $P$, and each $A_i$ intersects $J$ only in $A$ and $B_i$ intersects $J$ only in $B$. 

We denote by $\pi_{\hyp{h}}$ the parity function defined by the hyperplane $\hyp{h}$. The difference $\Delta\pi_{\hyp{h}}(A)=\Delta\pi_{\hyp{h}}(B)=\pi_{\hyp{h}}(\zeta_1)-\pi_{\hyp{h}}(\zeta_2) =1 $ since $\zeta_1,\zeta_2$ are on different sides of $\hyp{h}$. On the other hand, we can compute it using Corollary \ref{prop: computing the parity} and the notation introduced above, and get 
\[ 1=\Delta\pi_{\hyp{h}}(A)=\sum _{j=1} ^m \sum _{J_i\subseteq A_j} \Delta\pi_{\hyp{h}} (J_i) \]  
where the second sum runs over all segments $J_i$ of $P$ which are contained in $A_j$.

By Corollary \ref{cor: two state solution for PH curves}, we can replace the sum by  
\[ 1=\sum _{j=1} ^m \sum _{J_i\subseteq A_j} \Delta\pi (J_i) = \sum _{j=1} ^m \Delta\pi (A_j) \]
where $\Delta\pi (A_j)$ denotes the difference in the parity of the two endpoints of $A_j$ with respect to $\pi$.

From this it follows that one of the intervals $A_j$ has $\Delta\pi (A_j)=1$, and thus its endpoints have different parity, as desired.
\end{proof}

\bibliographystyle{plain}
\bibliography{main}

\end{document}